\tikzset{draw half paths/.style 2 args={%
  decoration={show path construction,
    lineto code={
      \draw [#1] (\tikzinputsegmentfirst) -- 
         ($(\tikzinputsegmentfirst)!0.5!(\tikzinputsegmentlast)$);
      \draw [#2] ($(\tikzinputsegmentfirst)!0.5!(\tikzinputsegmentlast)$)
        -- (\tikzinputsegmentlast);
    }
  }, decorate
}}
\tikzset{draw third paths/.style 2 args={%
  decoration={show path construction,
    lineto code={
      \draw [#1] (\tikzinputsegmentfirst) -- 
         ($(\tikzinputsegmentfirst)!0.33!(\tikzinputsegmentlast)$);
      \draw [#2] ($(\tikzinputsegmentfirst)!0.67!(\tikzinputsegmentlast)$)
        -- (\tikzinputsegmentlast);
    }
  }, decorate
}}
\tikzset{draw 2third paths/.style 2 args={%
  decoration={show path construction,
    lineto code={
      \draw [#1] (\tikzinputsegmentfirst) -- 
         ($(\tikzinputsegmentfirst)!0.67!(\tikzinputsegmentlast)$);
      \draw [#2] ($(\tikzinputsegmentfirst)!0.33!(\tikzinputsegmentlast)$)
        -- (\tikzinputsegmentlast);
    }
  }, decorate
}}
\tikzset{middlearrow/.style={
        decoration={markings,
            mark= at position 0.5 with {\arrow{#1}} ,
        },
        postaction={decorate}
    }
}
\tikzset{->-/.style={decoration={
  markings,
  mark=at position #1 with {\arrow{>}}},postaction={decorate}}}
\newtheorem{theorem}{Theorem}[section]
\newaliascnt{lemma}{theorem}
\newtheorem{lemma}[lemma]{Lemma} 
\newaliascnt{proposition}{theorem}
\newtheorem{proposition}[proposition]{Proposition}   
\newaliascnt{corollary}{theorem}
\newtheorem{corollary}[corollary]{Corollary}
\newaliascnt{question}{theorem}
\newtheorem{question}[question]{Question}
\newaliascnt{conjecture}{theorem}
\theoremstyle{definition}
\newaliascnt{example}{theorem}
\newtheorem{example}[example]{Example}
\newaliascnt{definition}{theorem}
\newtheorem{definition}[definition]{Definition}
\theoremstyle{remark}
\newtheorem*{remark}{Remark}
\newcommand{\mc}[1]{\mathcal{#1}}
\newcommand{\mb}[1]{\mathbb{#1}}
\newcommand{\mf}[1]{\mathfrak{#1}}
\newcommand{\ms}[1]{\mathscr{#1}}
\newcommand{\rarr}[1]{\xrightarrow{#1}}
\newcommand{\ri}[1]{R_i^{#1}}
\newcommand{\G}{\Gamma}
\newcommand{\tr}{(\triangle)}
\newcommand{\bart}[1]{\overline{\text{#1}}}
\newcommand{\im}[1]{\text{im }#1}
\newcommand{\dne}{\hfill \qed \vspace{0.3cm}} 
\newcommand{\lra}{(\leftrightarrows)}
\newcommand{\stc}{\; :\; }
\newcommand{\un}[1]{\underline{#1}}
\title{Two Fra\"{i}ss\'{e}-style theorems for homomorphism-homogeneous relational structures}
\author{ Thomas D. H. Coleman\footnotemark[1]}
\begin{document}

\maketitle
\footnotetext[1]{School of Mathematics and Statistics, University of St Andrews, St Andrews, KY16 9SS, United Kingdom. Email: \texttt{tdhc@st-andrews.ac.uk}. This work forms part of the author's PhD thesis at the University of East Anglia.}

\begin{abstract}
In this paper, we state and prove two Fra\"{i}ss\'{e}-style results that cover existence and uniqueness properties for twelve of the eighteen different notions of homomorphism-homogeneity as introduced by Lockett and Truss, and provide forward directions and implications for the remaining six cases. Following these results, we completely determine the extent to which the countable homogeneous undirected graphs (as classified by Lachlan and Woodrow) are homomorphism-homogeneous; we also provide some insight into the directed graph case.

\emph{Keywords:} homomorphism-homogeneous, relational structures, Fra\"{i}ss\'{e} theory, infinite graph theory. \\
\emph{2010 Mathematics Subject Classification:} 03C15 (primary), 05C63
\end{abstract}

\section{Introduction}

A relational first-order structure $\mc{M}$ is \emph{homogeneous} (or \emph{ultrahomogeneous}, depending on source) if every isomorphism between finite substructures of $\mc{M}$ extends to an automorphism of $\mc{M}$. Examples of homogeneous structures include the countable dense linear order without endpoints $(\mb{Q},<)$, the random graph $R$ and the universal tournament $\mb{T}$ \cite{macpherson2011survey}. The celebrated theorem of Fra\"{i}ss\'{e} \cite{fraisse1953certaines} states that a structure $\mc{M}$ is homogeneous if and only if the class $\ms{C}$ of finite substructures of $\mc{M}$ (often called the \emph{age} of $\mc{M}$) satisfies four distinct conditions; furthermore, $\mc{M}$ is unique up to isomorphism if this is the case. A large body of literature, in a range of subjects across mathematics, is devoted to the study of homogeneous structures. Particular areas of interest include classification results \cite{schmerl1979countable} \cite{lachlan1980countable} \cite{cherlin1998classification}, combinatorial aspects \cite{oligomorphic1990} and model-theoretic properties, both as objects in their own right and via a link to $\aleph_0$-categorical structures \cite{hodges1993model}. Through the link to $\aleph_0$-categorical structures and the famous Ryll-Nardzewski theorem (also Engeler, Svenonius \cite{hodges1993model}) there is a well-established connection between automorphisms of countable homogeneous structures and interesting infinite permutation groups \cite{infpermgroups1998}. Many properties of automorphism groups of homogeneous structures have been studied, such as topological dynamics \cite{kechris2005fraisse} and applications to constraint satisfaction problems on infinite domains \cite{bodirsky2012complexity}. Furthermore, the groups themselves have lent themselves to research into properties such as: simplicity \cite{macpherson2011simplicity}; generation \cite{uncountablecofinality2005}; reconstruction of the structure from the automorphism group \cite{barbina2007reconstruction}; the existence of generic elements \cite{truss1992generic}; and the small index property \cite{hodges1993small} \cite{kechris2007turbulence}. An excellent overview of this vast subject is \cite{macpherson2011survey}. 

The idea of homogeneity has been extended to other types of finite partial maps between relational structures. This is the concept of \emph{homomorphism-homogeneity}; originally developed by Cameron and Ne\v{s}et\v{r}il in 2006 \cite{cameron2006homomorphism} and Ma\v{s}ulovi\'{c} in 2007 \cite{masulovic2007homomorphism}. This definition was taken to its logical conclusion in the papers of Lockett and Truss \cite{lockettgeneric} \cite{lockett2014some}, in which they detailed eighteen different notions of homomorphism-homogeneity based on both the finite partial map and the type of endomorphism (see \autoref{defxyhomo}). An example of this is MB-homogeneity; a structure $\mc{M}$ is MB-homogeneous if every monomorphism between finite substructures of $\mc{M}$ extends to a bijective endomorphism of $\mc{M}$. The development of this subject has been rapid, culminating in detailed accounts of homomorphism-homogeneous graphs \cite{rusinov2010homomorphism} \cite{coleman2018permutation}, finite tournaments with loops \cite{ilic2008finite}, and posets \cite{lockett2014some}. Aside from these combinatorial studies, finding analogues of results about automorphism groups for endomorphism monoids is a motivating factor in this subject; examples of these include the development of \emph{oligomorphic transformation monoids} \cite{masulovic2011oligomorphic} \cite{coleman2018permutation} and the idea of \emph{generic endomorphisms} \cite{lockettgeneric}. 


Throughout this article, let $\sigma$ be a countable relational signature. Suppose that $\ms{C}$ is a class of finite $\sigma$-structures. The proof of one direction of 
Fra\"{i}ss\'{e}'s theorem uses 
conditions on $\ms{C}$ to inductively construct a homogeneous structure
$\mc{M}$ whose age is $\ms{C}$. One of these conditions is 
the \emph{joint embedding property} (JEP); this property (along with two others) ensures that we can 
construct a countable $\sigma$-structure $\mc{M}$ with age $\ms{C}$. The second 
is the \emph{amalgamation property} (AP); this ensures that
$\mc{M}$ is homogeneous. This final claim is verified by showing that 
$\mc{M}$ has the \emph{extension property}, a necessary and sufficient condition 
for a countable $\sigma$-structure $\mc{M}$ to be homogeneous.
Fra\"{i}ss\'{e}'s theorem also states that any two homogeneous
$\sigma$-structures with the same age are isomorphic; this is shown using a 
back-and-forth argument between two similarly constructed structures that builds the desired isomorphism. The forth part of 
the argument ensures that the extended map is totally defined; the back part 
ensures that the map is eventually surjective.

Cameron and Ne\v{s}et\v{r}il \cite{cameron2006homomorphism} proved an 
analogue of Fra\"{i}ss\'{e}'s theorem for \emph{MM-homogeneity}, where every monomorphism between finite substructures of 
some structure $\mc{M}$ extends to a monomorphism of $\mc{M}$ (see \autoref{defxyhomo}). 
This proof necessitated modification of the amalgamation property to ensure 
MM-homogeneity; resulting in the \emph{mono-amalgamation property} (MAP). In a 
slight departure to the technique used to prove Fra\"{i}ss\'{e}'s theorem, the 
proof of the analogous theorem for MM-homogeneity in 
\cite{cameron2006homomorphism} utilised a forth alone argument; this is because 
the extended map need not be surjective. On the uniqueness side,
the same article also showed that two MM-structures with the same age may be 
non-isomorphic; instead detailing that two MM-homogeneous structures were unique up to a 
weaker notion called \emph{mono-equivalence}. The proof of this again used a forth alone argument. Finally, work has been done on the case of \emph{HH-homogeneity}, where every finite partial homomorphisms of $\mc{M}$ extends to an endomorphism of $\mc{M}$. The notion of a \emph{homo-amalgamation property} (HAP) first appeared in the preprint of Pech and Pech \cite{pech2011constraint}, in which they used this to prove a version of Fra\"{i}ss\'{e}'s theorem for HH-homogeneity by using a forth alone argument. In addition, they also showed that two HH-homogeneous structures with the same age are homomorphism-equivalent. These results were later published in their following paper \cite{pech2016towards}. Further insights were made by Dolinka \cite{dolinka2014bergman}, who used the HAP (and the equivalent one-point homomorphism extension property (1PHEP)) to determine which structures were both homogeneous and HH-homogeneous.

In the case of MB-homogeneity, a forth alone approach does not suffice. As the 
extended map must be surjective, we are required to use a back-and-forth 
argument. The fact that monomorphisms are not invertible in general necessitates 
the use of a second amalgamation property alongside the MAP of 
\cite{cameron2006homomorphism}; this was defined by Coleman, Evans and Gray 
\cite{coleman2018permutation} using \emph{antimonomorphisms} in the 
\emph{bi-amalgamation property} (BAP). In a similar situation to 
\cite{cameron2006homomorphism}, two MB-homogeneous structures with the same age 
may not be isomorphic but instead are unique up to \emph{bi-equivalence}; the 
proof of this also requires a back-and-forth argument. 

In light of these previous generalisations of Fra\"{i}ss\'{e}'s theorem and the 
multitude of types of homomorphism-homogeneity (see 
\autoref{defxyhomo}), the natural aim would be to find an ``umbrella" version of 
Fra\"{i}ss\'{e}'s theorem; one that encapsulates all possible notions of 
homomorphism-homogeneity. This result would supply Fra\"{i}ss\'{e}'s theorem, 
and the versions of \cite{cameron2006homomorphism} and 
\cite{coleman2018permutation}, as corollaries. Such a theorem could help to determine the extent to which a structure is homomorphism-homogeneous based on structural properties. In turn, this will provide a rich source of oligomorphic 
transformation monoids \cite[Theorem 1.7]{coleman2018permutation}.

However, a compromise must be reached between idealism and practicality for two 
reasons. First, as discussed above, differing approaches are required if the 
extended map is surjective; see the contrast between analogues of Fra\"{i}ss\'{e}'s theorem for MM-homogeneity \cite{cameron2006homomorphism} and 
MB-homogeneity \cite{coleman2018permutation} for a case in point. In the forth alone case, we can utilise a 
single modified amalgamation property in order to construct the structure and 
extend the map. The issue is that monomorphisms and homomorphisms are not ``invertible" in 
general. This is particularly problematic in the homomorphism case; what could you use to accurately describe the `back' condition for homomorphisms, given that the underlying function may not be invertible? As evidenced in \cite{coleman2018permutation}, we need \emph{two} modified amalgamation 
properties in the back-and-forth case; one for the forth part to ensure the extended map is totally defined, and one for the back part to ensure the resulting map is surjective. Second, some kinds 
of homomorphism-homogeneity are easier to deal with than others. There is a 
distinct dichotomy in the set of notions of homomorphism-homogeneity, split between those whose extended maps are not 
necessarily the same ``type" as the partial map (such as MH-homogeneity, in that 
a homomorphism is not necessarily an monomorphism), and those whose extended 
maps are definitely of the same type than the partial map (such as MM, or 
MI-homogeneity). The former case causes issues in inductively constructing a 
structure due to the lack of certainty about the extended map; this is discussed 
in further detail in \autoref{sforth}.

The first of these reasons therefore necessitate \emph{two} similar but markedly 
different theorems (\autoref{Forth} and \autoref{BackForth}) based on whether or 
not the proof uses forth alone or a back-and-forth argument; these form the central theorems of the paper. The second only allows 
the two theorems to cover twelve of the eighteen different notions of 
homomorphism-homogeneity. In the statement of the theorems below, what constitutes the ``relevant" amalgamation property 
and notion of equivalence will be explained in Sections \ref{sforth} and \ref{sbackforth}.

\begin{theorem}\label{Forth} Let XY $\in \{$II, MI, MM, HI, HM, HH$\}$.
\begin{enumerate}[(1)]
\item If $\mc{M}$ is an XY-homogeneous $\sigma$-structure, then Age$(\mc{M})$ 
has the relevant amalgamation property.
\item If $\ms{C}$ is a class of finite $\sigma$-structures with countably many 
isomorphism types, is closed under isomorphisms and substructures, has the JEP 
and the relevant amalgamation property, then there exists a XY-homogeneous 
$\sigma$-structure $\mc{M}$ with age $\ms{C}$.
\item Any two XY-homogeneous $\sigma$-structures with the same age are 
equivalent up to a relevant notion of equivalence.
\end{enumerate}
\end{theorem}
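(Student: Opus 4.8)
The plan is to establish part (3) by a \emph{forth alone} argument carried out in both directions, using the XY-homogeneity of the \emph{target} structure at each stage; note that the amalgamation property is not needed here, since it is reserved for the construction in part (2). Fix two XY-homogeneous $\sigma$-structures $\mc{M}$ and $\mc{N}$ with $\text{Age}(\mc{M}) = \text{Age}(\mc{N}) = \ms{C}$. I would first build a type-Y morphism $f \colon \mc{M} \to \mc{N}$, and then, since the hypotheses on $\mc{M}$ and $\mc{N}$ are symmetric, run the identical argument with their roles interchanged to obtain a type-Y morphism $\mc{N} \to \mc{M}$. The existence of type-Y morphisms in both directions is precisely the relevant notion of equivalence in these cases: bi-embeddability when Y is I, mono-equivalence when Y is M, and homomorphism-equivalence when Y is H.

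To build $f$, I would enumerate $\mc{M} = \{m_0, m_1, \dots\}$ and construct an increasing chain $f_0 \subseteq f_1 \subseteq \cdots$ of finite partial maps into $\mc{N}$, where each $f_n$ has domain a finite substructure $A_n$ containing $\{m_0, \dots, m_{n-1}\}$. The heart of the argument is the one-point extension step. Given $f_n \colon A_n \to \mc{N}$, put $A_{n+1} = A_n \cup \{m_n\}$; as substructures of relational structures are induced, $A_{n+1} \in \ms{C} = \text{Age}(\mc{N})$, so I may fix an embedding $g \colon A_{n+1} \to \mc{N}$. Because $g$ is an embedding it reflects relations, so the partial map $g(a) \mapsto f_n(a)$ on the substructure $g(A_n)$ is of type X in $\mc{N}$ (it is the composite of the isomorphism $(g|_{A_n})^{-1}$ with the type-X map $f_n$). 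Applying the XY-homogeneity of $\mc{N}$, this partial map extends to a type-Y endomorphism $h \colon \mc{N} \to \mc{N}$, and I set $f_{n+1} = h \circ g$; by construction $f_{n+1}$ extends $f_n$ and is defined at $m_n$.

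The decisive bookkeeping is maintaining the inductive invariant, and this is exactly where the restriction to the six cases II, MI, MM, HI, HM, HH enters: in each of them a type-Y morphism is automatically a type-X morphism (an embedding is both a monomorphism and a homomorphism, and a monomorphism is a homomorphism). Consequently $f_{n+1} = h \circ g$, being the composite of the embedding $g$ with the type-Y map $h$, is again of type Y, hence of type X, and the induction proceeds. Taking $f = \bigcup_n f_n$ yields a map defined on all of $\mc{M}$ since $m_n \in A_{n+1}$; and because every relation in $\sigma$ is finitary, the defining properties pass to the union --- preservation of relations always, injectivity when Y is I or M, and reflection of relations when Y is I --- so $f$ is a type-Y morphism.

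I expect the main obstacle to be the extension step: the ``correction'' of an arbitrary embedding $g$ into a map agreeing with $f_n$, achieved by post-composing with the endomorphism $h$ furnished by homogeneity. Checking that $g(a) \mapsto f_n(a)$ really does have type X (so that homogeneity may be invoked) and that the composite $h \circ g$ retains type Y requires a short but careful case analysis on the two letters. Finally, I would emphasise the structural reason this is only an equivalence and not an isomorphism: unlike the classical Fra\"{i}ss\'{e} uniqueness proof, no ``back'' step is present, since for the second letters I, M, H the type-Y morphisms need not be surjective. This is why the conclusion is phrased up to the relevant notion of equivalence, and why the genuinely surjective second letters are instead treated by the back-and-forth method of \autoref{BackForth}.
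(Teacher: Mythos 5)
Your proposal addresses only part (3) of the theorem. Parts (1) and (2) are not argued at all: you explicitly defer the amalgamation property to ``the construction in part (2)'' but then never carry out that construction. This is the substantive gap, since part (2) is the technical heart of the result --- one must build $\mc{M}$ over countably many stages, alternating JEP steps (to realise every isomorphism type of $\ms{C}$) with XYAP steps (scheduled via a pairing function so that \emph{every} triple $(A,B,f)$ with $A\subseteq B$ and $f:A\to M_i$ of type X is eventually amalgamated), and then verify that the limit has the XY-extension property; this last verification is also where the restriction to $\mf{F}\cap\mf{I}$ (i.e.\ to the six listed classes) is genuinely needed, since the map produced by one amalgamation step must again be of type X to feed into the next. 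Part (1) is easier (extend $f_1$ to an endomorphism $\alpha$ of $\mc{M}$ of type Y and take $D=B_1\cup B_2\alpha$ with $g_2=\alpha|_{B_2}$), but it too needs to be said. As it stands the proposal proves one third of the statement.

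For the part you do prove, the argument is correct and is essentially the paper's: the paper first shows that XY-homogeneity implies the XY-extension property (by precomposing with an embedding $\theta^{-1}$ exactly as in your ``correction'' step) and then runs the forth-alone chain in each direction using the XYEP of the target; you have simply inlined that lemma into the one-point extension step. Your identification of where the hypothesis XY $\in\{$II, MI, MM, HI, HM, HH$\}$ enters (type Y implies type X, so the invariant survives each extension) and of why no back step is available matches the paper. One small imprecision: Y-equivalence as defined in the paper is not merely the existence of type-Y morphisms in both directions, but the requirement that \emph{every} embedding of a finite substructure of one into the other extends to such a morphism; your construction delivers this provided you take $f_0$ to be the given finite embedding rather than the empty map, so you should say so explicitly.
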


\begin{theorem}\label{BackForth} Let XZ $\in \{$IA, MA, MB, HA, HB, HE$\}$.
\begin{enumerate}[(1)]
\item If $\mc{M}$ is an XZ-homogeneous $\sigma$-structure, then Age$(\mc{M})$ 
has the two relevant amalgamation properties.
\item If $\ms{C}$ is a class of finite $\sigma$-structures with countably many 
isomorphism types, is closed under isomorphisms and substructures, has the JEP 
and the two relevant amalgamation properties, then there exists a XZ-homogeneous 
$\sigma$-structure $\mc{M}$ with age $\ms{C}$.
\item Any two XZ-homogeneous $\sigma$-structures with the same age are 
equivalent up to a relevant notion of equivalence.
\end{enumerate}
\end{theorem}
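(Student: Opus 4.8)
The plan is to treat the six cases XZ $\in \{$IA, MA, MB, HA, HB, HE$\}$ uniformly, parametrising over the partial-map type X $\in \{$I, M, H$\}$ and the surjective endomorphism type Z $\in \{$A, B, E$\}$; the key structural fact I would exploit is that these are exactly the cases in which the restriction of a Z-endomorphism to a finite pair of substructures is again an X-map, so the inductive construction stays type-consistent. Because every Z-endomorphism is surjective, a forth-alone construction in the style of \autoref{Forth} cannot produce the required maps: we must force surjectivity as well as totality. I would therefore work with \emph{two} amalgamation properties — a \emph{forth} property, essentially the relevant amalgamation property of \autoref{Forth} for the X-maps, controlling extendability to a total map; and a \emph{back} property, phrased in terms of the converse ``anti''-maps generalising the antimonomorphisms of the BAP of \cite{coleman2018permutation}, whose role is to guarantee the existence of preimages. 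Both are equivalent to one-point extension properties, in the forth and back directions respectively, and it is these that feed the back-and-forth.

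For part (1), I would embed the amalgamation data into $\mc{M}$ and read the amalgam off from within it. Given $\mc{A}, \mc{B}, \mc{C} \in \mathrm{Age}(\mc{M})$ with the relevant maps out of $\mc{A}$, I realise all three as finite substructures of $\mc{M}$; the induced partial X-map between the two copies of $\mc{A}$ extends, by XZ-homogeneity, to a Z-endomorphism $f$ of $\mc{M}$, and the finite substructure generated by the images supplies the forth amalgam (the type-matching fact ensures $f$ restricts to an X-map, so these images really are X-copies). For the back property I would use surjectivity of $f$: since every point of the relevant copy lies in $\im{f}$, collecting preimages inside $\mc{M}$ yields the back amalgam, with the anti-maps recording the preimage correspondence.

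For part (2), I would build $\mc{M}$ as the union of a chain $\mc{A}_0 \subseteq \mc{A}_1 \subseteq \cdots$ of members of $\ms{C}$: the JEP initiates the chain and ensures every member of $\ms{C}$ embeds into some $\mc{A}_n$, while two interleaved families of requirements — discharged at successive stages using the forth property and the back property respectively — guarantee that $\mc{M}$ satisfies a forth and a back one-point extension property. A standard back-and-forth then shows that any partial X-map between finite substructures of $\mc{M}$ extends to a surjective Z-endomorphism: the forth steps make the map total and the back steps make it surjective, while the type-matching observation above guarantees that the limit map is genuinely of type Z. Hence $\mc{M}$ is XZ-homogeneous with $\mathrm{Age}(\mc{M}) = \ms{C}$.

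For part (3), given XZ-homogeneous $\mc{M}$ and $\mc{N}$ with the same age, I would run the analogous back-and-forth between $\mc{M}$ and $\mc{N}$ to construct the relevant equivalence: when Z $=$ A the resulting bijective embedding is an isomorphism (so IA recovers ordinary homogeneity and Fra\"{i}ss\'{e}'s theorem), while for Z $\in \{$B, E$\}$ the inverse of the constructed map need not preserve relations, so the correct output is a \emph{pair} of Z-maps $\mc{M} \to \mc{N}$ and $\mc{N} \to \mc{M}$ — bi-equivalence, or its epimorphism analogue, obtained by running the argument in each direction. The main obstacle throughout, and the real content beyond \autoref{Forth}, is the back direction: because X-maps and Z-maps are not invertible in general, one cannot simply reverse the forth step, and the anti-map formalism must be set up so that the back amalgamation property genuinely yields preimages while remaining compatible with the forth step under interleaving. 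Making this bookkeeping cohere uniformly across all six cases, rather than case-by-case as in the MB treatment of \cite{coleman2018permutation}, is where I expect the bulk of the difficulty to lie.
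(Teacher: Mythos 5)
Your proposal is correct and follows essentially the same route as the paper: the back condition is phrased via converses of surjective maps (the paper's antihomomorphisms/multifunctions), part (1) reads both amalgams off a single Z-endomorphism of $\mc{M}$ using its surjectivity to locate a \emph{finite} preimage of the amalgam, part (2) interleaves the JEP with the forth and back amalgamation properties in a chain (stages $0,1,2 \bmod 3$ in the paper) and verifies two one-point extension properties, and part (3) produces a pair of surjective maps by back-and-forth, giving Z-equivalence. The one detail you gloss over is that when X $=$ H an anti-map can have infinite image (a point may have infinitely many preimages under a homomorphism), so ``collecting preimages'' must be replaced by choosing a finite preimage, and the back extension property must be restricted to anti-maps with finite image --- a restriction the paper imposes explicitly and uses in both the equivalence with XZ-homogeneity and the enumeration of amalgamation tasks in the construction.
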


While not the ideal ``umbrella'' theorem, these two results are still useful in 
determining the extent to which a structure is homomorphism-homogeneous; thus 
providing interesting examples of oligomorphic transformation monoids. To that end, this article is dedicated to the proof of these two theorems; as 
well as determining a complete picture of homomorphism-homogeneity for some 
well-known structures. 

\autoref{sforth} begins by defining the eighteen different notions of homomorphism-homogeneity, and contains the proof of \autoref{Forth} split into three propositions (\ref{xyap}, \ref{xyexist}, \ref{xyunique}) that correspond to the three points of the theorem. \autoref{smultanti} utilises the idea of the converse of a function to introduce the concept of an \emph{antihomomorphism} between two $\sigma$-structures; essentially, this is the preimage of a homomorphism that preserves non-relations. This machinery underpins the `back' condition that is used to extend a map between finite substructures of a $\sigma$-structure $\mc{M}$ to a surjective endomorphism of $\mc{M}$. Following this, \autoref{sbackforth} is dedicated to the proof of \autoref{BackForth}. In \autoref{smhhc} we introduce the idea of a \emph{maximal homomorphism-homogeneity class} (mhh-class), and determine mhh-classes for every countable homogeneous undirected graph in the classification of Lachlan and Woodrow \cite{lachlan1980countable}, as well as turning our attention to the directed graph case.

Throughout the article, we will write $\un{x}$ to mean an $n$-tuple of some set; this non-standard notation is motivated by the use of barred notation to mean converses of functions, which appear more regularly. The notation $^{-1}$ is reserved exclusively for the inverse of a function. For some countable indexing set $I$, we define $\sigma = \{R_i\stc i\in I\}$ to be a relational signature. Usually, $\mc{M}$ will denote a countable $\sigma$-structure on domain $M$. The \emph{age} Age$(\mc{M})$ of $\mc{M}$ is the class of all finite structures that can be embedded in $\mc{M}$. For more on the introductory concepts of model theory, \cite{hodges1993model} is a good place to start.

\section{Proof of \autoref{Forth}}\label{sforth}

We recall the eighteen different notions of homomorphism-homogeneity as developed in the two papers of Lockett and Truss \cite{lockettgeneric} \cite{lockett2014some}. Following their lead, we denote each type of endomorphism by a symbol: H for endomorphism, E for epimorphism, M for monomorphism, B for bimorphism, I for embedding and A for automorphism. We cannot assert that a finite partial map is surjective; there is no well defined notion of a finite partial epimorphism, for instance. Therefore, there are only three types of finite partial map of a structure: H for homomorphism, M for monomorphism, and I for embedding. Without loss of generality, maps between finite substructures can be taken to be surjective. 

\begin{definition}\label{defxyhomo} Let $\mc{M}$ be a first-order structure, and take X $\in \{\text{H},\text{M},\text{I}\}$ and Y $\in \{\text{H},\text{E},\text{M},\text{B},\text{I},\text{A}\}$. Say that $\mc{M}$ is \emph{XY-homogeneous} if every finite partial map of type X of $\mc{M}$ extends to a map of type Y of $\mc{M}$. We denote the collection of all notions of homomorphism-homogeneity by $\mf{H}$.
Furthermore, we denote the \emph{class of all XY-homogeneous structures} by XY, and say that $\mb{H}$ is the set of all classes of XY-homogeneous structures.
\end{definition}

For example, a structure $\mc{M}$ is HE-homogeneous if every finite partial homomorphism (H) of $\mc{M}$ extends to a epimorphism (E) of $\mc{M}$. Regular homogeneity (as in \cite{macpherson2011survey}, for instance) corresponds to IA-homogeneity using this notation. All possible types of homomorphism-homogeneity given in \autoref{defxyhomo} are outlined in \autoref{xyhomo}. 

\begin{table}[h]
\renewcommand{\arraystretch}{1.25}
\centering
\begin{tabular}{c c c c}
\hline 
 & isomorphism (I) & monomorphism (M) & homomorphism (H) \\ 
\hline \hline
End$(\mc{M})$ (H) & IH & MH & HH \\ 
\hline 
Epi$(\mc{M})$ (E) & IE & ME & HE \\
\hline 
Mon$(\mc{M})$ (M) & IM & MM & HM \\ 
\hline 
Bi$(\mc{M})$ (B) & IB & MB & HB \\ 
\hline 
Emb$(\mc{M})$ (I) & II & MI & HI \\ 
\hline 
Aut$(\mc{M})$ (A) & IA & MA & HA \\ 
\hline 
\end{tabular}
\caption{Table of XY-homogeneity: $\mc{M}$ is XY-homogeneous if a finite partial map of type X (column) extends to a map of type Y (row) in the associated monoid. The collection of all notions of homomorphism-homogeneity is denoted by $\mf{H}$.}\label{xyhomo}
\end{table}

It is important to make the distinction between a notion of homomorphism-homogeneity and the associated class of homomorphism-homogeneous structures. For example, II-homogeneity and IA-homogeneity represent two different notions of homomorphism-homogeneity under consideration. As outlined in \autoref{xyhomo}, II-homogeneity is where every finite partial isomorphism extends to an \emph{embedding}; IA-homogeneity is where every finite partial isomorphism extends to an automorphism. However, the classes II and IA of homomorphism-homogeneous structures coincide. For countable structures, it was shown by Lockett and Truss \cite{lockett2014some} that a structure is II (MI, HI)-homogeneous if and only if it is IA (MA, HA)-homogeneous; that is, II = IA, MI = MA and HI = HA. This difference between notions of homomorphism-homogeneity and classes of homomorphism-homogeneous structures is apparent in \autoref{sbackforth}, where we re-prove this result of \cite{lockett2014some} from a Fra\"{i}ss\'{e}-theoretic perspective.

It follows that some notions of homomorphism-homogeneity are stronger than others. For instance, as every bimorphism is a monomorphism, it follows that every MB-homogeneous structure is also MM-homogeneous. Similarly, as an isomorphism is both a monomorphism and a homomorphism, it follows that if a structure $\mc{M}$ is XY-homogeneous then it is also IY-homogeneous. \label{invcor} This natural concept inversely corresponds to a natural containment order on the set $\mb{H}$ of homomorphism-homogeneity classes; see \autoref{hhdiag} for a diagram of this order. Notice that the stronger the notion of homomorphism-homogeneity, the class of countable structures that satisfy that notion is smaller. This difference is explained in more detail in \autoref{smhhc}.

\begin{center}
\begin{figure}[h]
\centering
\begin{tikzpicture}[node distance=2cm,inner sep=0.8mm,scale=1.2]
\node (0) at (0,0) {HA};
\node (1) at (-1,1) {MA};
\node (2) at (0,1) {HI};
\node (3) at (1,1) {HB};
\node (4) at (-2,2) {IA};
\node (5) at (-1,2) {MI};
\node (6) at (0,2) {MB};
\node (7) at (1,2) {HM};
\node (8) at (2,2) {HE};
\node (9) at (-2,3) {II};
\node (10) at (-1,3) {IB};
\node (11) at (0,3) {MM};
\node (12) at (1,3) {ME};
\node (13) at (2,3) {HH};
\node (14) at (-1,4) {IM};
\node (15) at (0,4) {IE};
\node (16) at (1,4) {MH};
\node (17) at (0,5) {IH};
\draw (0) -- (1);
\draw [thick,double] (0) -- (2);
\draw (0) -- (3);
\draw (1) -- (4);
\draw [thick,double] (1) -- (5);
\draw (1) -- (6);
\draw (2) -- (5);
\draw (2) -- (7);
\draw (3) -- (6);
\draw (3) -- (7);
\draw (3) -- (8);
\draw [thick,double] (4) -- (9);
\draw (4) -- (10);
\draw (5) -- (9);
\draw (5) -- (11);
\draw (6) -- (10);
\draw (6) -- (11);
\draw (6) -- (12);
\draw (7) -- (11);
\draw (7) -- (13);
\draw (8) -- (12);
\draw (8) -- (13);
\draw (9) -- (14);
\draw (10) -- (14);
\draw (10) -- (15);
\draw (11) -- (14);
\draw (11) -- (16);
\draw (12) -- (15);
\draw (12) -- (16);
\draw (13) -- (16);
\draw (14) -- (17);
\draw (15) -- (17);
\draw (16) -- (17);
\end{tikzpicture}
\caption{The set $\mb{H}$ of homomorphism-homogeneity classes for countable first-order structures, partially ordered by inclusion. Lines indicate inclusion, double lines indicate equality.}\label{hhdiag}
\end{figure}
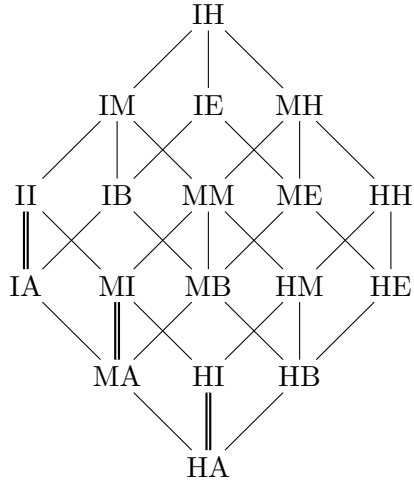
\end{center}
As discussed in the introduction, it is necessary to partition $\mf{H}$ into two pieces based on whether 
or not the extended map is surjective. This represents the division between 
cases where a forth alone argument will suffice and the other when we require a 
back-and-forth construction. Furthermore, there are some elements of $\mf{H}$ 
that are weaker notions of homogeneity than others. These are of the form XY 
where a map of type Y does not necessarily imply that it is a map of type X; for 
instance, a homomorphism is not necessarily a monomorphism. These phenomena 
motivate the division of $\mf{H}$ into the following subsets:
\begin{itemize}
\item forth alone $\mf{F} = \left\{\text{XY }\in\mf{H}\stc  \text{X, Y}\in\{\text{H, M, I}\}\right\}$;
\item back-and-forth $\mf{B} = \{$XZ $\in\mf{H}$ : X $\in\{$H, M, I$\}$, Z 
$\in\{$E, B, A$\}\}$;
\item no implication $\mf{N} = \{$IH, IE, IM, IB, MH, ME$\}$;
\item implication $\mf{I} = \mf{H}\smallsetminus\mf{N}$.\label{fbni}
\end{itemize}

This partitions $\mf{H}$ into four parts based on the intersections of 
$\mf{B},\mf{F}$ with $\mf{N},\mf{I}$ (see \autoref{int}, where the boxes 
represent intersections).
\begin{center}
\begin{figure}[h]
\centering
\begin{tikzpicture}[node distance=2cm,inner sep=0.8mm,scale=0.8]
\node (0) at (3,-1) {HA};
\node (1) at (2,-2) {MA};
\node (2) at (-1,-1) {HI};
\node (3) at (2,-1) {HB};
\node (4) at (3,-2) {IA};
\node (5) at (-2,-2) {MI};
\node (6) at (1,-2) {MB};
\node (7) at (-2,-1) {HM};
\node (8) at (1,-1) {HE};
\node (9) at (-1,-2) {II};
\node (10) at (2,1) {IB};
\node (11) at (-3,-2) {MM};
\node (12) at (3,1) {ME};
\node (13) at (-3,-1) {HH};
\node (14) at (-2,1) {IM};
\node (15) at (1,1) {IE};
\node (16) at (-1,1) {MH};
\node (17) at (-3,1) {IH};
\node (F) at (-2,2.5) {$\mf{F}$};
\node (B) at (2,2.5) {$\mf{B}$};
\node (N) at (-4.5,1) {$\mf{N}$};
\node (I) at (-4.5,-1) {$\mf{I}$};
\draw (-4,-3) rectangle (4,2);
\draw (-4,-3) rectangle (0,0);
\draw (0,0) rectangle (4,2);
\end{tikzpicture}
\caption{Diagram illustrating the subsets $\mf{F}$, $\mf{B}$, $\mf{N}$, and $\mf{I}$ of $\mf{H}$.}\label{int}
\end{figure}
\end{center}

We move on to establish the machinery required for the proof of \autoref{Forth}. This result deals with types of 
homomorphism-homogeneity in $\mf{F}$ (see \autoref{int}); those that only 
require a forth construction to prove. Consequently, we have that X,Y $\in\{$H, 
M, I$\}$ throughout this section. When we say a \emph{map of type X}, we are 
referring to this instance; so if $\alpha$ is a map of type H, it is a 
homomorphism. Notice that I $\subseteq$ M $\subseteq$ H.

A critical step in Fra\"{i}ss\'{e}'s proof is the establishment that the inductively constructed structure $\mc{M}$ is homogeneous; nominally by showing that $\mc{M}$ satisfies the \emph{extension property}, a necessary and sufficient condition for homogeneity. Since different kinds of homomorphism-homogeneity rely on extending different kinds of maps, this property needs to be generalised and then shown to be an equivalent condition to the relevant notion of homomorphism-homogeneity. To that end, we define the \emph{XY-extension property} (XYEP), where X,Y $\in\{$H, M, I$\}$:

\begin{quotation}(XYEP) A structure $\mc{M}$ with age $\ms{C}$ has the 
\emph{XYEP} if for all $A\subseteq B\in\ms{C}$ and maps $f:A\rarr{}\mc{M}$ of 
type X, there exists a map $g:B\rarr{}\mc{M}$ of type Y extending $f$.
\end{quotation}

For example, the standard extension property in Fra\"{i}ss\'{e}'s theorem is the IIAP, and the \emph{mono-extension property} of \cite{cameron2006homomorphism} is the MMAP.

Ideally, we would like to say a structure $\mc{M}$ is XY-homogeneous if and only if $\mc{M}$ has the 
XYEP, generalising the observation of Fra\"{i}ss\'{e}. However, complications occur in the proof of the converse direction of this statement; this 
is due to the inductive construction of the extended map. For example, suppose 
that $\mc{M}$ has the IMEP and that $f:A\rarr{}B$ is an isomorphism. Extending 
this using the IMEP gives a monomorphism $g:A'\to B'$ where $A\subseteq A'$ and 
$B\subseteq B'$. However, $g$ is a monomorphism between finite substructures; 
and so in general, we cannot extend $g$ to another monomorphism $h$ between 
finite substructures. The only way we can continue extending is if the map of 
type Y is also of type X. This behaviour is the motivating factor in splitting 
$\mf{H}$ into $\mf{I}$ and $\mf{N}$ (see \pageref{fbni}). In light of this, we show that the XYEP is 
a necessary condition for XY-homogeneity in general, and that it is also 
sufficient when the extended map of type Y is also a map of type X.

\begin{proposition}\label{xyep} Let $\mc{M}$ be a countable $\sigma$-structure 
with age $\ms{C}$. 
\begin{enumerate}[(1)]
\item Suppose that XY $\in\mf{F}$. If $\mc{M}$ is XY-homogeneous, then $\mc{M}$ 
has the XYEP.
\item Suppose that XY $\in\mf{F}\cap\mf{I}$. If $\mc{M}$ has the XYEP, then 
$\mc{M}$ is XY-homogeneous.
\end{enumerate}
\end{proposition}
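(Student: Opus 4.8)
The plan is to prove both directions by adapting the standard Fra\"iss\'e recipe to the present setting, in which X and Y range over $\{$H, M, I$\}$ with I $\subseteq$ M $\subseteq$ H as classes of maps; the facts I rely on are that each of these three classes contains all isomorphisms (in particular all embeddings, since I $\subseteq$ M $\subseteq$ H) and is closed under composition.

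For part (1), suppose $\mc{M}$ is XY-homogeneous and fix $A \subseteq B \in \ms{C}$ together with a map $f : A \to \mc{M}$ of type X. Since $\ms{C} = \mathrm{Age}(\mc{M})$, the structure $B$ embeds into $\mc{M}$; fix such an embedding $e : B \to \mc{M}$. Its restriction $e|_A : A \to e(A)$ is an isomorphism onto the finite substructure $e(A) \subseteq \mc{M}$, so $\phi := f \circ (e|_A)^{-1} : e(A) \to \mc{M}$ is a finite partial map of $\mc{M}$, and it is of type X because precomposition with an isomorphism preserves each of the types H, M, I. Applying XY-homogeneity to $\phi$ produces a map $\Phi : \mc{M} \to \mc{M}$ of type Y extending $\phi$, and I set $g := \Phi \circ e : B \to \mc{M}$. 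Since $e$ is an embedding (hence of type Y) and $\Phi$ is of type Y, the composite $g$ is again of type Y; and for $a \in A$ one has $g(a) = \Phi(e(a)) = \phi(e(a)) = f((e|_A)^{-1}(e(a))) = f(a)$, so $g$ extends $f$. This is precisely the XYEP, and the argument uses XY $\in \mf{F}$ only, as claimed.

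For part (2), assume XY $\in \mf{F} \cap \mf{I}$, so that every map of type Y is also of type X, and suppose $\mc{M}$ has the XYEP. Given a finite partial map $f$ of type X with domain $A$, I construct an endomorphism by a forth-alone argument: enumerate $M = \{m_0, m_1, \dots\}$, set $f_0 = f$ on $A_0 = A$, and at stage $n$ form the finite substructure $B_n = A_n \cup \{m_n\}$ of $\mc{M}$, which lies in $\ms{C}$. As $A_n \subseteq B_n \in \ms{C}$ and $f_n$ is of type X, the XYEP yields an extension $f_{n+1} : B_n \to \mc{M}$ of type Y. The decisive point, and the reason $\mf{I}$ cannot be dropped, is that $f_{n+1}$ is then \emph{also} of type X, so the induction continues; this is exactly the obstruction illustrated by the IMEP example preceding the proposition. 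Setting $\Phi = \bigcup_n f_n$ gives a map defined on all of $M$ (since $m_n \in B_n = \dom{f_{n+1}}$) and extending $f$.

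It remains to verify that $\Phi$ is of type Y, and this is the step I expect to require the most care. The map $f_0$ need not itself be of type Y, but this causes no difficulty: since $f_0 \subseteq f_1$ one may write $\Phi = \bigcup_{n \geq 1} f_n$ as the union of an increasing chain of maps each of type Y. Because $\sigma$ is relational and hence every relation is finitary, any tuple witnessing or reflecting a relation, and any pair witnessing a potential failure of injectivity, lies in $\dom{f_n}$ for some $n \geq 1$; as $f_n$ is of type Y, the defining property of type Y (preservation of relations when Y $=$ H, together with injectivity when Y $=$ M, together with reflection of relations when Y $=$ I) holds on that tuple and therefore for $\Phi$. Hence $\Phi$ is a map of type Y extending $f$, and $\mc{M}$ is XY-homogeneous.
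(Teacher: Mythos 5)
Your proof is correct and follows essentially the same route as the paper: part (1) transports $f$ along an embedding of $B$ into $\mc{M}$, applies XY-homogeneity, and composes back, while part (2) is the same forth-alone chain construction using the XYEP at each step, with XY $\in \mf{I}$ guaranteeing the extended map is again of type X. Your explicit verification that the union of the increasing chain is of type Y (via finitarity of the relations) is a detail the paper leaves implicit, but it is not a different approach.
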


\begin{proof} (1) Let $A\subseteq B\in\ms{C}$ and $f:A\to\mc{M}$ be a map of 
type X. As Age$(\mc{M}) = \ms{C}$, there exists an isomorphism $\theta:B\to 
B\theta\subseteq\mc{M}$. Therefore, $\theta^{-1}f: B\theta\to Af$ is a map of 
type X between finite substructures of $\mc{M}$. As $\mc{M}$ is XY-homogeneous, 
extend $\theta^{-1}f$ to a map $\alpha:\mc{M}\to\mc{M}$ of type Y. Hence, 
$\theta\alpha:B\to\mc{M}$ is a map of type Y. It remains to show that 
$\theta\alpha$ extends $f$; for any $a\in A$ it follows that \[af = a\theta\theta^{-1}f 
= a\theta\alpha\] as $\alpha$ extends $\theta^{-1}f$. Therefore $\mc{M}$ has the 
XYEP.\\

(2) Suppose that $f:A\rarr{}B$ is a map of type X between finite substructures 
of $\mc{M}$. We use a forth argument to extend $f$ to a map $\alpha$ of type Y. 
As $\mc{M}$ is countable, we enumerate the points of $M = \{m_0,m_1,\ldots\}$. Set 
$A = A_0, B = B_0$ and $f = f_0$ and assume that we have extended $f$ to a map 
$f_k:A_k\rarr{}B_k$, where $A_i\subseteq A_{i+1}$ and $B_i\subseteq B_{i+1}$ for 
all $0\leq i\leq k-1$. At most, we can assume that $f_k$ is a map of type Y. 
Select $m_i\in\mc{M}\smallsetminus A_k$, where $i$ is the least natural number 
such that $m_i\notin A_k$. We can see that $A_k\cup\{m_i\}\subseteq \mc{M}$ 
belongs to $\ms{C}$. As XY $\in\mf{I}$, the map $f_k$ of type Y is also of type X; 
so use the XYEP to find a map $f_{k+1}:A_k\cup\{m_i\}\rarr{}\mc{M}$ of type Y 
extending $f_k$. Repeating this process infinitely many times, ensuring that 
each $m_i$ appears at some stage, extends $f$ to a map 
$\alpha:\mc{M}\rarr{}\mc{M}$ of type Y; so $\mc{M}$ is XY-homogeneous.
\end{proof}

We now move to the proof of \autoref{Forth}. Our eventual aim is to construct a countable structure $\mc{M}$ with age 
$\ms{C}$, where $\mc{M}$ is XY-homogeneous. Recall (from \cite{macpherson2011survey}) that a class $\ms{C}$ of finite $\sigma$-structures has the 
\emph{joint embedding property} (JEP) if for all $A,B\in\ms{C}$ there exists a 
$D\in\ms{C}$ such that $D$ jointly embeds $A$ and $B$. This property, along with $\ms{C}$ being closed under substructures and isomorphisms, and having countably many isomorphism types, is required to construct a countable structure $\mc{M}$ with age $\ms{C}$; it 
has nothing to do with the homogeneity of the structure $\mc{M}$. In Fra\"{i}ss\'{e}'s theorem, it is the 
amalgamation property that is central to ensuring that the constructed structure is homogeneous; following the lead of \cite{cameron2006homomorphism}, this must be generalised in order to ensure XY-homogeneity. So to construct 
a countable, XY-homogeneous structure $\mc{M}$ with age $\ms{C}$, the class of finite structures $\ms{C}$ must have the JEP and 
some generalised amalgamation property.

Since different types of homogeneity require different amalgamation properties, 
it then makes sense to define an ``umbrella'' condition; one that 
encompasses every required amalgamation property. This is the 
\emph{XY-amalgamation property} (XYAP), where X,Y $\in\{$H, M, I$\}$:

\begin{quotation}(XYAP) Let $\ms{C}$ be a class of finite $\sigma$-structures. Then 
$\ms{C}$ has the \emph{XYAP} if for all $A,B_1,B_2\in\ms{C}$, map 
$f_1:A\rarr{}B_1$ of type X and embedding $f_2:A\rarr{}B_2$, there exists a 
$D\in\ms{C}$, embedding $g_1:B_1\rarr{}D$ and map $g_2:B_2\rarr{}D$ of type Y 
such that $f_1g_1 = f_2g_2$ (see \autoref{dxyap}).
\end{quotation}

\begin{figure}[h]
\centering
\begin{tikzpicture}[node distance=2cm]
\node(D) {$\exists D$};
\node(R) [below right of=D] {$B_2$};
\node(L) [below left of=D] {$B_1$};
\node(H) [below right of=L] {$A$};
\path[->,thick,dotted] (R) edge node[above right] {$g_2$} (D);
\path[right hook->,thick,dotted] (L) edge node[above left] {$g_1$} (D);
\path[right hook->] (H) edge node[below right] {$f_2$} (R);
\path[->] (H) edge node[below left] {$f_1$} (L);
\end{tikzpicture}
\caption{Diagram of the XY-amalgamation property (XYAP).}\label{dxyap}
\end{figure}
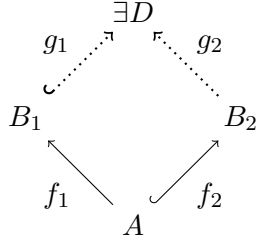

Based on choices for X and Y, the XYAP yields nine different amalgamation 
properties; one for each notion of XY-homogeneity in $\mf{F}$. For instance, the 
IIAP is the standard amalgamation property, the MMAP is the MAP in 
\cite{cameron2006homomorphism} and the HHAP is the HAP from 
\cite{dolinka2014bergman}. These are the relevant amalgamation properties as alluded to in \autoref{Forth}. Our next result demonstrates \autoref{Forth} (1).

\begin{proposition}[\autoref{Forth} (1)]\label{xyap} Suppose that XY $\in\mf{F}$. If a countable
$\sigma$-structure $\mc{M}$ is XY-homogeneous, then Age$(\mc{M})$ has the XYAP.
\end{proposition}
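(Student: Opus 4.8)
The plan is to deduce the XYAP from the XYEP, which we already know holds for $\mc{M}$ by \autoref{xyep}~(1), since $\mc{M}$ is XY-homogeneous and XY $\in\mf{F}$. So suppose we are given $A,B_1,B_2\in\ms{C}$ together with a map $f_1:A\rarr{}B_1$ of type X and an embedding $f_2:A\rarr{}B_2$; the task is to manufacture the amalgam $D$ and the two maps $g_1,g_2$ inside $\mc{M}$ and verify they commute.

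First I would realise the data inside $\mc{M}$. Since $B_1\in\ms{C}=\text{Age}(\mc{M})$, fix an embedding $e_1:B_1\rarr{}\mc{M}$. The composite $f_1e_1:A\rarr{}\mc{M}$ is then a map of type X, since precomposing and postcomposing a type-X map with embeddings preserves its type (using $\text{I}\subseteq\text{M}\subseteq\text{H}$). Because $f_2$ is an embedding it identifies $A$ with a substructure $A':=Af_2\subseteq B_2$, and $f_2^{-1}:A'\rarr{}A$ is an isomorphism; hence $f:=f_2^{-1}f_1e_1:A'\rarr{}\mc{M}$ is again a map of type X. Now I would apply the XYEP to the substructure pair $A'\subseteq B_2$ (both in $\ms{C}$) and the type-X map $f$, obtaining a map $g:B_2\rarr{}\mc{M}$ of type Y that extends $f$.

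With $e_1$ and $g$ in hand I would take $D$ to be the finite substructure of $\mc{M}$ whose domain is $B_1e_1\cup B_2g$; as a finite substructure of $\mc{M}$ it lies in $\text{Age}(\mc{M})=\ms{C}$. Corestricting to $D$ yields an embedding $g_1:B_1\rarr{}D$ (namely $e_1$) and a map $g_2:B_2\rarr{}D$ of type Y (namely $g$), since restricting the codomain of a type-Y map to a substructure containing its image preserves the type. It then remains to check that the square commutes: for $a\in A$ we have $af_1g_1=(af_1)e_1$, while $af_2g_2=(af_2)g=(af_2)f=(af_2)f_2^{-1}f_1e_1=(af_1)e_1$, where the middle equality uses that $g$ extends $f$ on $A'=Af_2$. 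Hence $f_1g_1=f_2g_2$, and $\ms{C}$ has the XYAP.

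The step I expect to require the most care is not any single computation but making sure the \emph{asymmetry} of the two properties is matched correctly: the XYAP demands an embedding on the $B_1$-leg and a type-Y map on the $B_2$-leg, and this is exactly forced by the shape of the XYEP, which extends a type-X map along a substructure inclusion $A'\subseteq B_2$ — the embedding $f_2$ being precisely what turns $A$ into that substructure. The accompanying bookkeeping, namely that composing a map of type X (or Y) with embeddings on either side, and corestricting it to a substructure containing its image, leaves its type unchanged, is routine but genuinely relies on $\text{I}\subseteq\text{M}\subseteq\text{H}$ and on embeddings both preserving and reflecting all relations.
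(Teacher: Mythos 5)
Your proof is correct and follows essentially the same route as the paper's: realise the data inside $\mc{M}$, use the homogeneity of $\mc{M}$ to extend $f_1$ over $B_2$, and take $D$ to be the union of (a copy of) $B_1$ with the image of $B_2$. The only difference is that you factor the extension step through the XYEP (\autoref{xyep}~(1)) rather than extending $f_1$ directly to a map $\alpha:\mc{M}\rarr{}\mc{M}$ of type Y and setting $g_2=\alpha|_{B_2}$, which is a harmless repackaging since \autoref{xyep}~(1) is itself proved by exactly that global extension.
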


\begin{proof} Suppose that $A,B_1,B_2\in$ Age$(\mc{M})$, $f_1:A\rarr{}B_1$ is a 
map of type X and $f_2:A\rarr{}B_2$ is an embedding. Without loss of generality, 
suppose that $f_2$ is the inclusion map and that $A,B_1,B_2\subseteq \mc{M}$. 
Using XY-homogeneity of $\mc{M}$, extend $f_1:A\rarr{}B$ to a map 
$\alpha:\mc{M}\rarr{}\mc{M}$ of type Y. Set $D = B_1\cup B_2\alpha$ and induce 
the structure on $D$ with relations from $\mc{M}$. Finally, take 
$g_1:B_1\rarr{}D$ to be the inclusion map and define $g_2$ to be the map $g_2 = 
\alpha|_{B_2}:B_2\rarr{}D$ of type Y. We can see that $f_1g_1 = f_2g_2$ and so 
these choices verify the XYAP for Age$(\mc{M})$.
\end{proof}

Now, we proceed with the proof of \autoref{Forth} (2). As in \cite{cameron2006homomorphism}, different stages of the inductive construction are achieved at even and odd steps.

\begin{proposition}[\autoref{Forth} (2)]\label{xyexist} Suppose that XY $\in\mf{F} \cap \mf{I}$. Let 
$\ms{C}$ be a class of finite $\sigma$-structures that is closed under 
isomorphism and substructures, has countably many isomorphism types, and has the 
JEP and XYAP. Then there exists a countable XY-homogeneous $\sigma$-structure $\mc{M}$ 
with age $\ms{C}$. 
\end{proposition}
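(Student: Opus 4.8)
The plan is to build $\mc{M}$ as the union of an increasing chain $\mc{M}_0 \subseteq \mc{M}_1 \subseteq \cdots$ of members of $\ms{C}$, in which each inclusion is an embedding and each $\mc{M}_n$ is taken as an induced substructure, and then to appeal to \autoref{xyep}. Starting from any $\mc{M}_0 \in \ms{C}$, I would use even and odd stages for two different purposes. Since every $\mc{M}_n \in \ms{C}$ and any finite substructure of $\mc{M} = \bigcup_n \mc{M}_n$ already lies in some $\mc{M}_n$, closure of $\ms{C}$ under substructures immediately gives $\text{Age}(\mc{M}) \subseteq \ms{C}$. For the reverse inclusion I would use the JEP together with the fact that $\ms{C}$ has only countably many isomorphism types: enumerating representatives $C_0, C_1, \ldots$ of these types, at each even stage I apply the JEP to the current structure and the next $C_n$ to obtain a member of $\ms{C}$ embedding both, which becomes the next link of the chain. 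This forces every isomorphism type of $\ms{C}$ to embed into $\mc{M}$, so $\text{Age}(\mc{M}) = \ms{C}$.

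The odd stages are where the XYAP does its work, the goal being to force $\mc{M}$ to satisfy the XYEP. Here I would maintain a bookkeeping enumeration of all \emph{extension tasks}: triples $(A,B,f)$ where $A$ is a substructure of the structure built so far, $A \subseteq B \in \ms{C}$, and $f : A \to \mc{M}_n$ is a map of type X (a finite-image type-X map may always be viewed as landing in the current $\mc{M}_n$). Given the task scheduled at stage $n$, I apply the XYAP with $B_1 := \mc{M}_n$ and $f_1 := f$ of type X, and with $B_2 := B$ and $f_2 : A \hookrightarrow B$ the inclusion embedding. This yields $D \in \ms{C}$, an embedding $g_1 : \mc{M}_n \to D$, and a map $g_2 : B \to D$ of type Y with $f g_1 = f_2 g_2$. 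Renaming so that $g_1$ becomes the inclusion $\mc{M}_n \hookrightarrow \mc{M}_{n+1} := D$, the equation $f g_1 = f_2 g_2$ reads $af = a g_2$ for every $a \in A$; that is, $g_2$ is a type-Y map extending $f$. Composing with the inclusion $\mc{M}_{n+1} \hookrightarrow \mc{M}$, which preserves type Y since Y $\in \{$H, M, I$\}$, produces the required type-Y extension of $f$ inside $\mc{M}$, discharging the task.

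Once every task is discharged, $\mc{M}$ satisfies the XYEP: given $A \subseteq B \in \ms{C}$ and a type-X map $f : A \to \mc{M}$, both $A$ and its image $Af$ lie in some $\mc{M}_k$, so $(A,B,f)$ appears as a task and is handled at a later stage, yielding the desired $g : B \to \mc{M}$ of type Y. Finally, since XY $\in \mf{F} \cap \mf{I}$, \autoref{xyep}(2) upgrades the XYEP to XY-homogeneity, completing the proof. Note that the construction itself never needs the hypothesis XY $\in \mf{I}$; it enters only through this last invocation, which is precisely why the same argument does not settle the cases in $\mf{N}$.

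The main obstacle I expect is the bookkeeping rather than any single algebraic step. The collection of extension tasks is a moving target, since a task $(A,B,f)$ only becomes meaningful once $A$ and $Af$ have entered the chain, and new substructures (hence new tasks) appear at every stage; moreover, for a fixed $A$ there are countably many isomorphism types of extension $B$. Organising a dovetailed enumeration that provably visits each task at some finite stage, while interleaving the even-stage JEP steps that pin down the age, is the delicate part. By contrast, the XYAP application is routine once the amalgamation diagram is set up correctly, the only conceptual point being that the extending map $g_2$ need only be of type Y and not type X.
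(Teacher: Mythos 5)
Your proposal is correct and takes essentially the same route as the paper: even stages use the JEP to fix the age, odd stages apply the XYAP with $B_1$ the current finite stage, $f_2$ the inclusion $A\hookrightarrow B$, and $g_1$ realised as an inclusion, so that $g_2$ extends $f$, and \autoref{xyep}~(2) then converts the XYEP into XY-homogeneity. The dovetailing you flag as the delicate point is handled in the paper by fixing a countable set $S$ of representative pairs $(A,B)$, listing the finitely many type-X maps into each finite stage, and scheduling via a bijection $\beta(i,j)\geq i$ --- exactly the bookkeeping you describe.
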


\begin{proof} 
Along the lines of similar proofs of Fra\"{i}ss\'{e}'s theorem (see \cite{infpermgroups1998}, \cite{cameron2006homomorphism}), the idea is to construct $\mc{M}$ over countably many stages, assuming that $M_k$ has been constructed at some stage $k\in\mb{N}$, with $M_0$ being some $A\in\ms{C}$. As the number of isomorphism types in $\ms{C}$ is countable, we can choose a countable set $S$ of pairs $(A,B)$ with $A\subseteq B\in\ms{C}$ such that every pair $A'\subseteq B'\in\ms{C}$ is represented by some pair $(A,B)\in S$. Define a bijection $\beta:\mb{N}\times \mb{N}\to \mb{N}$ such that $\beta(i,j) \geq i$.

Assume first that $k$ is even. As $\ms{C}$ has countably many isomorphism types, we can enumerate the isomorphism types of $\ms{C}$ by $\{M_0 = T_0,T_2,T_4,\ldots\}$. Use the JEP to find a structure $D$ that contains both $M_k$ and a copy of $C\cong T_k\in\ms{C}$; define $M_{k+1}$ to be this structure $D$. Now suppose that $k$ is odd. Let $L = (A_{kj}, B_{kj}, f_{kj})_{j\in\mb{N}}$ be the list of all triples $(A,B,f)$ such that $(A,B)\in S$ and $f:A\to M_k$ is a map of type X. This list is countable as $S$ is and there are finitely many maps of type X from $A$ into $M_k$. Let $k = \beta(i,j)$. Then as $\beta(i,j)\geq i$, the map $f_{ij}:A_{ij}\to M_i\subseteq M_k$ exists. Therefore, we can use the XYAP to define $M_{k+1}$ such that $M_k\subseteq M_{k+1}$ and the map $f_{ij}:A_{ij}\to M_k$ of type X extends to some map $g_{ij}:B_{ij}\rarr{}M_{k+1}$ of type Y. This ensures that every possible XY-amalgamation occurs.

Define $\mc{M} = \bigcup_{k\in\mb{N}}M_k$. Our construction ensures that every isomorphism type of $\ms{C}$ appears at a 0 mod 3 stage, so every structure in $\ms{C}$ embeds into $\mc{M}$. Conversely, we have that $M_k\in\ms{C}$ for all $k\in\mb{N}$. As $\ms{C}$ is closed under substructures, every structure that embeds into $\mc{M}$ is in $\ms{C}$, showing that Age$(\mc{M}) = \ms{C}$. 

It remains to show that $\mc{M}$ is XY-homogeneous. As XY $\in\mf{F} \cap \mf{I}$, it is enough to show that $\mc{M}$ has the XYEP by \autoref{xyep}. So assume that $A \subseteq B\in\ms{C}$ and that $f:A\to\mc{M}$ is a map of type X. As $Af$ is finite, it follows that there exists $j\in \mathbb{N}$ such that $Af\subseteq M_j$. Furthermore, there exists a triple $(A_{j\ell},B_{j\ell},f_{j\ell})\in L$ such that there exists an isomorphism $\theta:B\to B_{j\ell}$ with $A\theta|_A = A_{j\ell}$ and $f = \theta|_Af_{j\ell}$. Define $n = \beta(j,\ell)$; as $n \geq j$, it follows that $Af = A_{j\ell}f_{j\ell}\subseteq M_j \subseteq M_n$. Here, $M_{n+1}$ is constructed by XY-amalgamating $M_n$ and $B_{j\ell}$ over $A_{j\ell}$; this provides an extension $g_{j\ell}:B_{j\ell} \to M_{n+1}$ of type Y to the map $f_{j\ell}$ of type X. It follows that $g = \theta g_{j\ell}:B\to\mc{M}$ is a map of type Y extending the map $f$ of type $X$ (see \autoref{xyamaldiag} for a diagram). Therefore, $\mc{M}$ has the XYEP, proving that it is XY-homogeneous.\end{proof}
\begin{center}
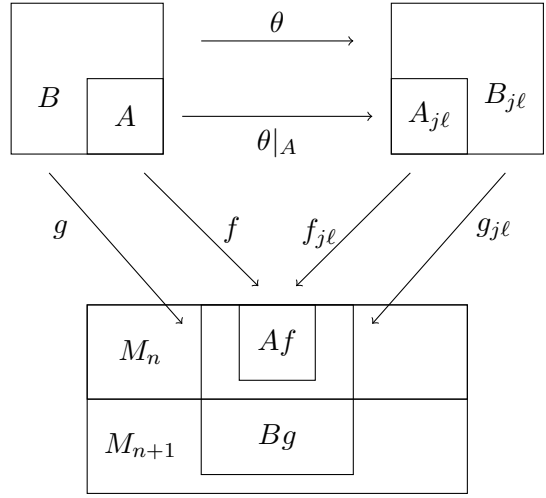
\begin{figure}[h]
\centering
\begin{tikzpicture}[node distance=1.8cm,inner sep=0.7mm,scale=1]
\draw (-3.5,3) rectangle (-1.5,1);
\draw (-2.5,2) rectangle (-1.5,1);
\node (a11) at (-2,1.5) {$A$};
\node (b11) at (-3,1.75) {$B$};

\draw (3.5,3) rectangle (1.5,1);
\draw (2.5,2) rectangle (1.5,1);
\node (a1) at (2,1.5) {$A_{j\ell}$};
\node (b) at (3,1.75) {$B_{j\ell}$};

\draw[->] (-1.25,1.5) -- (1.25,1.5);
\node (e1) at (0,1.15) {$\theta|_A$};

\draw[->] (-1,2.5) -- (1,2.5);
\node (e) at (0,2.75) {$\theta$};

\draw (-2.5,-1) rectangle (2.5,-2.25);
\draw (-2.5,-1) rectangle (2.5,-3.5);
\node at (-1.8,-1.625) {$M_n$};
\node at (-1.8,-2.875) {$M_{n+1}$};
\draw (-1,-3.25) rectangle (1,-1);
\draw (-0.5,-2) rectangle (0.5,-1);
\node (a) at (0,-1.5) {$Af$};
\node (b) at (0,-2.75) {$Bg$};
\draw[<-] (0.25,-0.75) -- (1.75,0.75);
\node (alpha) at (0.55,-0.05) {$f_{j\ell}$};
\draw[<-] (1.25,-1.25) -- (3,0.75);
\node (alpha) at (2.85,0) {$g_{j\ell}$};
\draw[<-] (-0.25,-0.75) -- (-1.75,0.75);
\node (f) at (-0.6,0) {$f$};
\draw[<-] (-1.25,-1.25) -- (-3,0.75);
\node (h1alpha) at (-2.85,0) {$g$};
\draw (-2.5,-2.25) -- (2.5,-2.25);
\end{tikzpicture}
\caption{Diagram of maps in the proof of \autoref{xyexist}. The map $g = \theta g_{j\ell}:B\to\mc{M}$ of type Y extends the map $f$ of type X.}\label{xyamaldiag}
\end{figure}
\end{center} 

All that remains to show is part (3) of \autoref{Forth}. It was previously 
mentioned in \cite{cameron2006homomorphism} that two MM-homogeneous structures 
with the same age need not be isomorphic, but are instead 
\emph{mono-equivalent}. This inspires another collective definition; and is the relevant notion of equivalence as mentioned in the statement of \autoref{Forth}.

\begin{definition} Let $\mc{M,N}$ be $\sigma$-structures and suppose that Y 
$\in\{$H, M, I$\}$. Say that $\mc{M}$ and $\mc{N}$ are \emph{Y-equivalent} if 
Age$(\mc{M}) =$ Age$(\mc{N})$ and every embedding $f:A\to\mc{N}$ from a finite 
substructure $A$ of $\mc{M}$ can be extended to a map $g:\mc{M}\to \mc{N}$ of 
type Y, and vice versa. 
\end{definition}

Note that if two structures $\mc{M,N}$ are M-equivalent, then they are 
mono-equivalent in the sense of \cite{cameron2006homomorphism}. If two 
structures $\mc{M,N}$ are I-equivalent, then they are mutually embeddable. 

\begin{proposition}[\autoref{Forth} (3)]\label{xyunique} Let $\mc{M,N}$ be countable 
$\sigma$-structures, and suppose that XY $\in\mf{F}\cap\mf{I}$.
\begin{enumerate}[(1)]
\item Suppose that $\mc{M,N}$ are Y-equivalent. Then $\mc{M}$ is XY-homogeneous 
if and only if $\mc{N}$ is.
\item If $\mc{M,N}$ are XY-homogeneous and Age$(\mc{M})$ = Age$(\mc{N})$ then 
$\mc{M,N}$ are Y-equivalent.
\end{enumerate}
\end{proposition}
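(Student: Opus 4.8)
The plan is to prove both parts through the XY-extension property, using \autoref{xyep} as the bridge between XY-homogeneity and the XYEP (legitimate throughout, since XY $\in \mf{F}\cap\mf{I}$). Since the definition of Y-equivalence is symmetric in $\mc{M}$ and $\mc{N}$, in part (1) it suffices to prove a single implication — that $\mc{N}$ is XY-homogeneous whenever $\mc{M}$ is — the converse following by interchanging the two structures; similarly, in part (2) it is enough to extend an arbitrary embedding $f:A\to\mc{N}$ out of a finite substructure $A$ of $\mc{M}$ to a type-Y map $\mc{M}\to\mc{N}$, with the ``vice versa'' clause handled by symmetry. Throughout I write $\ms{C} = \text{Age}(\mc{M}) = \text{Age}(\mc{N})$ and repeatedly use three elementary facts: an embedding is a map of every type; a composite of type-Y maps is of type Y; and a type-X map followed by an embedding stays of type X. I would treat part (2) first, as the cleaner warm-up.

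For part (2) I run a forth-alone construction. Enumerate $\mc{M} = \{m_0,m_1,\dots\}$ and set $f_0 = f$, which is an embedding and hence of type Y. Inductively suppose $f_k:A_k\to\mc{N}$ is a type-Y map on a finite substructure $A_k\subseteq\mc{M}$ containing $A$ and $m_0,\dots,m_{k-1}$. Because XY $\in\mf{I}$, the map $f_k$ is also of type X; since $A_k\subseteq A_k\cup\{m_k\}\in\ms{C} = \text{Age}(\mc{N})$ and $\mc{N}$ has the XYEP by \autoref{xyep}(1), I extend $f_k$ to a type-Y map $f_{k+1}$ on $A_k\cup\{m_k\}$. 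The union $g = \bigcup_k f_k : \mc{M}\to\mc{N}$ is then a type-Y map extending $f$, because any finite tuple already lies in some $A_k$, where $f_k$ witnesses the relevant preservation (and reflection, when Y $=$ I). This gives one direction of Y-equivalence, and symmetry completes part (2).

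For part (1) I again invoke \autoref{xyep}: as $\mc{M}$ is XY-homogeneous it has the XYEP, and it suffices to show $\mc{N}$ has the XYEP (whence $\mc{N}$ is XY-homogeneous, as XY $\in\mf{F}\cap\mf{I}$). Fix $A\subseteq B\in\ms{C}$ and a type-X map $f:A\to\mc{N}$, and write $A^{*} = Af\subseteq\mc{N}$ for its finite image. The idea is a \emph{controlled} round trip through $\mc{M}$. First choose an embedding $\psi:A^{*}\to\mc{M}$ (possible since $A^{*}\in\text{Age}(\mc{M})$); then $f\psi:A\to\mc{M}$ is of type X, and as $A\subseteq B\in\ms{C} = \text{Age}(\mc{M})$ the XYEP of $\mc{M}$ extends it to a type-Y map $h:B\to\mc{M}$. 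Next, $\psi^{-1}$ embeds the finite substructure $A^{*}\psi\subseteq\mc{M}$ into $\mc{N}$, so Y-equivalence extends it to a type-Y map $\Xi:\mc{M}\to\mc{N}$. Setting $g = h\Xi:B\to\mc{N}$ yields a type-Y map, and for $a\in A$ one computes $ag = \big((af)\psi\big)\Xi = (af)\psi\psi^{-1} = af$, since $af\in A^{*}$ forces $(af)\psi\in A^{*}\psi$, where $\Xi$ agrees with $\psi^{-1}$. Thus $g$ extends $f$, verifying the XYEP for $\mc{N}$.

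The main obstacle is exactly the non-invertibility of type-Y maps when Y $\in\{$H, M$\}$: the naive strategy of pulling $f$ into $\mc{M}$ along some Y-equivalence map $\mc{N}\to\mc{M}$, extending there, and pushing back along $\mc{M}\to\mc{N}$ fails, because those two maps need not compose to the identity and so cannot recover $f$ on $A$. The device that dissolves this difficulty is to transport only the \emph{image} $A^{*} = Af$, and to do so along a genuine embedding $\psi$ whose inverse $\psi^{-1}$ governs the return trip; since $\psi\psi^{-1}$ is the identity on $A^{*}$, the composite $g = h\Xi$ restricts to $f$ on $A$ on the nose, with no type-Y map ever inverted. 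Checking that the composites keep the correct type — type X on the way in, type Y on the way out — is routine given the containments I $\subseteq$ M $\subseteq$ H and the hypothesis XY $\in\mf{I}$.
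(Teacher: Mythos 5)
Your proposal is correct and follows essentially the same route as the paper: part (2) is the same forth-alone induction using the XYEP of $\mc{N}$, and in part (1) your embedding $\psi$, the extension $\Xi$ of $\psi^{-1}$ supplied by Y-equivalence, and the composite $g = h\Xi$ correspond exactly to the paper's $e$, $\alpha$, and $h'\alpha$, with the same computation $af = afee^{-1} = ah'\alpha$ verifying the extension. The only differences are cosmetic (order of the two parts and the explicit appeal to symmetry).
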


\begin{proof} 
(1) It suffices to show that $\mc{N}$ has the XYEP by \autoref{xyep} (2). 
Suppose then that $A\subseteq B\in$ Age$(\mc{N})$ and there exists a map 
$f:A\rarr{}A'\subseteq\mc{N}$ of type X. Note that $A$ need not be isomorphic to 
$A'$. As Age$(\mc{M})=$ Age$(\mc{N})$, there exists a copy $A''$ of $A'$ in 
$\mc{M}$; fix an embedding $e:A'\to A''$ between the two. Therefore, 
$e^{-1}:A''\to A'$ is a isomorphism from a finite substructure of $\mc{M}$ into 
$\mc{N}$; as the two are Y-equivalent, we extend this to a map 
$\alpha:\mc{M}\to\mc{N}$ of type Y. Now, define a map $h = fe: A\to A''$; this 
is a map of type X from $A$ into $\mc{M}$. Since $\mc{M}$ is XY-homogeneous, it 
has the XYEP by \autoref{xyep} (1) and so we extend $h$ to a map $h':B\to 
\mc{M}$ of type Y. Now, the map $h'\alpha:B\rarr{}\mc{N}$ is a map of type Y; we 
need to show it extends $f$. So using the facts that $\alpha$ extends $e^{-1}$ 
and $h'$ extends $h = fe$, we have that for all $a\in A$: \[af = afee^{-1} = 
ah'\alpha.\] Therefore $\mc{N}$ has the XYEP. A diagram of this process can be found in \autoref{xyequivdiag}.

\begin{center}
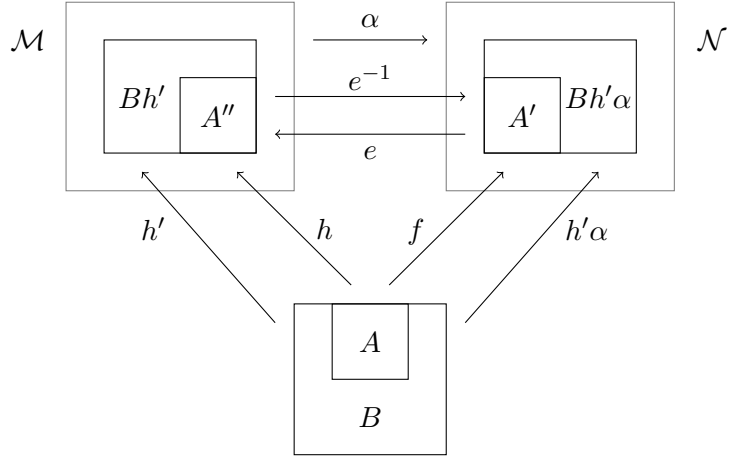
\begin{figure}[h]
\centering
\begin{tikzpicture}[node distance=1.8cm,inner sep=0.7mm,scale=1]

\draw[black!50!white] (-4,0.5) rectangle (-1,3);
\draw[black!50!white] (1,0.5) rectangle (4,3);
\node(m) at (-4.5,2.5) {$\mc{M}$}; 
\node(n) at (4.5,2.5) {$\mc{N}$};

\draw (-3.5,2.5) rectangle (-1.5,1);
\draw (-2.5,2) rectangle (-1.5,1);
\node (a11) at (-2,1.5) {$A''$};
\node (b11) at (-3,1.75) {$Bh'$};

\draw (3.5,2.5) rectangle (1.5,1);
\draw (2.5,2) rectangle (1.5,1);
\node (a1) at (2,1.5) {$A'$};
\node (b) at (3,1.75) {$Bh'\alpha$};

\draw[->] (-0.75,2.5) -- (0.75,2.5);
\node (alpha) at (0,2.75) {$\alpha$};

\draw[->] (-1.25,1.75) -- (1.25,1.75);
\node (e1) at (0,2) {$e^{-1}$};

\draw[<-] (-1.25,1.25) -- (1.25,1.25);
\node (e) at (0,1) {$e$};

\draw (-1,-3) rectangle (1,-1);
\draw (-0.5,-2) rectangle (0.5,-1);
\node (a) at (0,-1.5) {$A$};
\node (b) at (0,-2.5) {$B$};
\draw[->] (0.25,-0.75) -- (1.75,0.75);
\node (alpha) at (0.6,-0.05) {$f$};
\draw[->] (1.25,-1.25) -- (3,0.75);
\node (alpha) at (2.85,0) {$h'\alpha$};
\draw[->] (-0.25,-0.75) -- (-1.75,0.75);
\node (f) at (-0.6,0) {$h$};
\draw[->] (-1.25,-1.25) -- (-3,0.75);
\node (h1alpha) at (-2.85,0) {$h'$};
\end{tikzpicture}
\caption{Diagram of maps in the proof of \autoref{xyunique}. The map $h'\alpha$ is a map of type Y extending the map $f$ of type X, proving that $\mc{N}$ has the XYEP.}\label{xyequivdiag}
\end{figure}
\end{center}

(2) Let $A\subseteq\mc{M},B\subseteq\mc{N}$ and suppose that $f:A\rarr{}B$ is an 
embedding; trivially, $f$ is also a map of type X. We extend $f$ to a map 
$\alpha:\mc{M}\rarr{}\mc{N}$ of type Y via an inductive argument. As $\mc{M}$ is 
countable, we enumerate its elements $M = \{m_0,m_1,\ldots\}$. Set $A = A_0, B = 
B_0$ and $f = f_0$, and suppose that $f_k:A_k\rarr{}B_k$ is a map of type Y 
where $A_i\subseteq A_{i+1}$ and $B_i\subseteq B_{i+1}$ for all $0\leq i\leq 
k-1$. As XY $\in\mf{F}\cap\mf{I}$, $f_k$ is also a map of type X. Select a point 
$m_i\in\mc{M}\smallsetminus A_k$, where $i$ is the least natural number such 
that $m_i\notin A_k$. We see that $A_k\cup\{m_i\}$ is a substructure of $\mc{M}$ 
and is therefore an element of Age$(\mc{N})$ by assumption. As $\mc{N}$ is 
XY-homogeneous, by \autoref{xyep} (1) it has the XYEP. Using this, extend 
$f_k:A_k\rarr{}\mc{N}$ to a map $f_{k+1}:A_k\cup\{m_i\}\rarr{}\mc{N}$ of type Y. 
As XY $\in\mf{F}\cap\mf{I}$, we can repeat this process infinitely many times; 
by ensuring that every $m_i\in\mc{M}$ is included at some stage, we can extend 
the map $f$ to a map $\alpha:\mc{M}\rarr{}\mc{N}$ of type Y as required. We can 
use a similar argument to construct a map $\beta:\mc{N}\to\mc{M}$ of type Y; 
therefore $\mc{M}$ and $\mc{N}$ are Y-equivalent.
\end{proof}

\section{Multifunctions and antihomomorphisms}\label{smultanti}

As mentioned in the introduction, homomorphisms are not ``invertible" in 
general. For instance, there could be a homomorphism between two relational 
$\sigma$-structures $\alpha:\mc{A}\to\mc{B}$ sending a non-relation of $\mc{A}$ 
to a relation in $\mc{B}$; that is, such that $\un{a}\notin\ri{\mc{A}}$ but 
$\un{a}\alpha\in\ri{\mc{B}}$. Furthermore, there is no guarantee that the 
homomorphism is even injective; so $\alpha$ could send two points in $\mc{A}$ to 
the same point in $\mc{B}$. In establishing a suitable `back' amalgamation 
property for our Fra\"{i}ss\'{e}-style theorem, both of these considerations 
must be taken into account. This is achieved by the use of the converse of a function.

For a relation $\rho\subseteq X\times Y$, define the \emph{converse} of $\rho$ to be the set \[\rho^c=\{(y,x)\; : \;(x,y)\in \rho\}\subseteq Y\times X\] We say that a relation $\bar{f}\subseteq Y\times X$ is a \emph{partial multifunction} if $(y,x),(z,x)\in \bar{f}$ implies that $y = z$; and that $\bar{f}$ is a \emph{multifunction} if, in addition, for all $y\in Y$ there exists $x\in X$ such that $(y,x)\in \bar{f}$. It is easy to see that $\bar{f}$ is a partial multifunction if and only if it is the converse $f^c$ of a partial function $f$, and that $\bar{f}$ is a multifunction if and only if the partial function $f$ is surjective. A multifunction $\bar{f}\subseteq Y\times X$ is \emph{surjective} if for all $x\in X$ there exists $y\in Y$ such that $(y,x)\in \bar{f}$. Consequently, $\bar{f}$ is a surjective multifunction if and only if it is the converse of a surjective function $f$. It is clear that a (partial) multifunction $\bar{f}$ is a (partial) function if and only if it is the converse $f^c$ of a (partial) injective function $f$. 
We adopt this barred notation throughout the rest of this article; if $f\subseteq X\times Y$ is a function, denote the partial multifunction given by the converse $f^c$ of $f$ by $\bar{f}\subseteq Y\times X$, and vice versa. Note that $\bar{\bar{f}} = f$ for any function $f$.

\begin{example}\label{multifuneg} Let $X = \{1,\,2,\,3,\,4\}$ and $Y = \{a,\,b,\,c,\,d,\,e\}$ be two sets, and suppose that $f = \{(1,b),\,(2,b),\,(3,a),\,(4,c)\}$ is a function. Then the converse $\bar{f}$ of $f$ is a partial multifunction given by $\bar{f} = \{(b,1),\,(b,2),\,(a,3),\,(c,4)\}$ (see \autoref{pmultifun}). By restricting the codomain $Y$ of $f$ to its image $\im{f}$, the resulting function $g:X\to \im{f}$ that behaves like $f$ is surjective. In this case, the converse $\bar{g}:\im{f}\to X$ of $g$ is a surjective and totally defined multifunction (see shaded portion of \autoref{pmultifun}). This technique will be used frequently in \autoref{sbackforth}.

\begin{center}
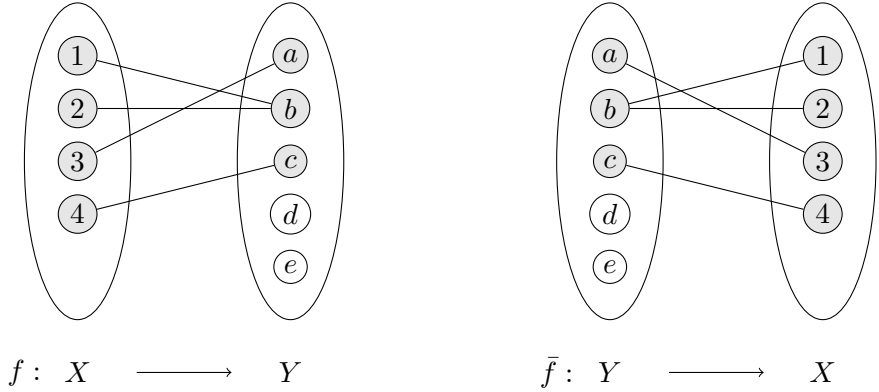
\begin{figure}[h]
\centering
\begin{tikzpicture}[node distance=1.8cm,inner sep=0.7mm,scale=0.7]
\node(U5) at (-3,-1) [circle,draw] {$e$};
\node(U4) at (-3,0) [circle,draw] {$d$};
\node(U3) at (-3,1) [circle,draw,fill=black!10!white] {$c$};
\node(U2) at (-3,2) [circle,draw,fill=black!10!white] {$b$};
\node(U1) at (-3,3) [circle,draw,fill=black!10!white] {$a$};
\node(U) at (-7,-3) {$X$};
\node(f) at (-8,-3) {$f:$};
\node(f) at (-6,-3) {};
\node(to) at (-4,-3) {};
\draw(U3) ellipse (1cm and 3cm);
\node(V1) at (-7,0) [circle,draw,fill=black!10!white] {$4$};
\node(V2) at (-7,1) [circle,draw,fill=black!10!white] {$3$};
\node(V3) at (-7,2) [circle,draw,fill=black!10!white] {$2$};
\node(V4) at (-7,3) [circle,draw,fill=black!10!white] {$1$};
\node(V) at (-3,-3) {$Y$};
\draw(V2) ellipse (1cm and 3cm);
\draw (V4) -- (U2);
\draw (V3) -- (U2);
\draw (V2) -- (U1);
\draw (V1) -- (U3);
\draw[->] (f) -- (to);
\node(U5x) at (3,3) [circle,draw,fill=black!10!white] {$a$};
\node(U4x) at (3,2) [circle,draw,fill=black!10!white] {$b$};
\node(U3x) at (3,1) [circle,draw,fill=black!10!white] {$c$};
\node(U2x) at (3,0) [circle,draw] {$d$};
\node(U1x) at (3,-1) [circle,draw] {$e$};
\node(Ux) at (3,-3) {$Y$};
\node(fx) at (2,-3) {$\bar{f}:$};
\node(fx) at (4,-3) {};
\node(tox) at (6,-3) {};
\draw(U3x) ellipse (1cm and 3cm);
\node(V1x) at (7,3) [circle,draw,fill=black!10!white] {$1$};
\node(V2x) at (7,2) [circle,draw,fill=black!10!white] {$2$};
\node(V3x) at (7,1) [circle,draw,fill=black!10!white] {$3$};
\node(V4x) at (7,0) [circle,draw,fill=black!10!white] {$4$};
\node(Vx) at (7,-3) {$X$};
\draw(V3x) ellipse (1cm and 3cm);
\draw (U4x) -- (V1x);
\draw (U4x) -- (V2x);
\draw (U5x) -- (V3x);
\draw (U3x) -- (V4x);
\draw[->] (fx) -- (tox);
\end{tikzpicture}
\caption[An example of a partial multifunction $\bar{f}$]{Diagram of a function $f$ and its converse, the partial multifunction $\bar{f}$, from \autoref{multifuneg}. The surjective function $g$ and its converse, the surjective multifunction $\bar{g}$, are shaded.}\label{pmultifun}
\end{figure}
\end{center}

\end{example}

If $\bar{f}\subseteq Y\times X$ is a multifunction, we will abuse notation and write $\bar{f}:Y\rarr{}X$ where the context is clear. If $y\in Y$, define the set $y\bar{f} = \{x\in X$ : $(y,x)\in \bar{f}\}$. In a contrast with a function, notice that $y\bar{f}$ is a set and not a single point; in fact, the set could be infinite (see \autoref{infex}). For a tuple $\un{y} = (y_1,\ldots,y_n)\in Y^n$, define $\un{y}\bar{f}$ to be the following set of tuples \[\un{y}\bar{f} = \{(x_1,\ldots,x_n)\mbox{ : }x_i\in y_i\bar{f}\mbox{ for all }1\leq i\leq n\}.\] For a subset $W$ of $Y$, we write \[W\bar{f} = \{x\in X\mbox{ : }(w,x)\in \bar{f}\mbox{ for some }w\in W\} = \bigcup_{w\in W}w\bar{f}.\] Abusing terminology, we say that $Y\bar{f}$ is the \emph{image} of $\bar{f}$.

\begin{example}\label{infex}
Recall that the sign function $s:\mb{R}\to\{-1,0,1\}$ is the surjective function defined by
\[ s(x) = \begin{cases} 
-1 & \text{if }x < 0\\
0 & \text{if }x = 0\\
1 & \text{if }x > 0\\
\end{cases}\] Let $\bar{s}:\{-1,0,1\}\to\mb{R}$ be the corresponding surjective multifunction. For example, $1\bar{s} = \mb{R}^+$ where $\mb{R}^+$ is the set of positive real numbers, $(0,-1)\bar{s} = \{(0,x)\in\mb{R}^2\; :\; x\in\mb{R}^-\}$ where $\mb{R}^-$ is the set of negative real numbers, and $\{-1,1\}\bar{s} = 1\bar{s}\cup -1\bar{s} = \mb{R}\setminus\{0\}$. 
\end{example}

\begin{remark}
 We note the equivalence between the image of the converse of a function $f$ and the preimage of $f$. The reason for the expression of these sets in terms of converses of functions, and not preimages, is for ease of use and notation.
\end{remark}

For a multifunction $\bar{f}:Y\rarr{}X$ and a subset $W\subseteq Y$, we say that the multifunction $\bar{f}|_W:W\to X$ that acts like $\bar{f}$ on $W$ is the \emph{restriction of $\bar{f}$ to $W$}. If $Y\subseteq B$ and $X\subseteq A$ are sets, and $\bar{f}:Y\to X$ and $\bar{g}:B\to A$ are two multifunctions, then we say that $\bar{g}$ \emph{extends} $\bar{f}$ if $y\bar{f} = y\bar{g}$ for all $y\in Y$.

Throughout, we would like to be able to compose functions with multifunctions and vice versa; we achieve this by composing them as relations.

\begin{lemma} \label{compo}\label{conversecompo} 
\begin{enumerate}[(1)]
 \item Suppose that $\bar{g}:C\rarr{}B$ and $\bar{f}:B\rarr{}A$ are multifunctions. Then $\bar{g}\circ \bar{f}=\bar{g}\bar{f}\subseteq C\times A$ is a multifunction.
 \item Let $f:A\to B$ and $g:B\to C$ be two functions, and suppose that $fg:A\to C$ is their composition. Then the converse map $\overline{fg}:C\to A$ is equal to $\bar{g}\bar{f}:C\to A$, where $\bar{g}$ and $\bar{f}$ are composed as relations.\dne
\end{enumerate}
\end{lemma}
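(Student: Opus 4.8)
The plan is to prove both statements by working directly with the set-theoretic definition of composition of relations, treating functions and multifunctions uniformly as subsets of a product.

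For part (1), first I would recall that composition of relations is defined by $(c,a) \in \bar{g}\bar{f}$ if and only if there exists $b \in B$ with $(c,b) \in \bar{g}$ and $(b,a) \in \bar{f}$. The goal is to verify the two multifunction axioms from the definition on page of the excerpt. To check the partial-multifunction condition, I would suppose $(c,a),(c',a) \in \bar{g}\bar{f}$ and aim to show $c = c'$: unpacking the composition gives intermediate points $b,b' \in B$ with $(b,a),(b',a) \in \bar{f}$ and $(c,b),(c',b') \in \bar{g}$; since $\bar{f}$ is a partial multifunction, $(b,a),(b',a)\in\bar f$ forces $b = b'$, and then since $\bar{g}$ is a partial multifunction, $(c,b),(c',b)\in\bar g$ forces $c = c'$. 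For totality (the ``multifunction'' as opposed to ``partial'' clause), given $c \in C$, totality of $\bar{g}$ supplies $b\in B$ with $(c,b)\in\bar g$, and totality of $\bar{f}$ supplies $a\in A$ with $(b,a)\in\bar f$, whence $(c,a)\in\bar g\bar f$; so $\bar g\bar f$ is a multifunction.

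For part (2), the cleanest route is to observe that converse commutes with composition at the level of arbitrary relations, and then specialise. Concretely, I would show $(fg)^c = g^c f^c$ as relations: $(c,a) \in (fg)^c$ iff $(a,c) \in fg$ iff there exists $b$ with $(a,b)\in f$ and $(b,c)\in g$ iff there exists $b$ with $(b,a)\in f^c$ and $(c,b)\in g^c$ iff $(c,a) \in g^c f^c$ (matching the left-to-right reading convention the paper uses for composition). Rewriting $(fg)^c = \overline{fg}$, $f^c = \bar f$, $g^c = \bar g$ gives $\overline{fg} = \bar g\bar f$ exactly as claimed. This is a purely formal chain of iff's with no real content beyond carefully tracking the order of the pairs.

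There is essentially no deep obstacle here; both parts are routine diagram-chasing once the relational definitions are in hand. The one thing to be careful about, and the only place an error could creep in, is the composition-order convention: the paper composes functions left-to-right (so $af$ then $(af)g$), which is why the converse reverses the order to $\bar g\bar f$ rather than $\bar f\bar g$. I would make sure the quantifier-unpacking in both parts respects this convention consistently, and I would note that part (1) is really what licenses writing $\bar g\bar f$ as a multifunction in the first place, so it is natural to prove it before invoking it in part (2).
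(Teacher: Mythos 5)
Your proof is correct. The paper actually omits the proof of this lemma entirely (the statement is followed immediately by the end-of-proof symbol, signalling that the verification is considered routine), and your argument — unpacking the relational composition to check the partial-multifunction and totality clauses for (1), and the formal chain of equivalences showing converse reverses composition for (2), with the left-to-right composition convention tracked correctly — is exactly the standard verification the paper intends.
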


\begin{remark} We previously noted that a function $g$ is also a multifunction if and only if it is injective; so by this lemma, the composition of a multifunction $\bar{f}$ with an injective function $g$ (or vice versa) is again a multifunction. Furthermore, the assumption that the function $g$ is injective in this case is necessary for the composition $\bar{f}\circ g$ to be a multifunction.
\end{remark}

Now, we extend the theory of multifunctions into the setting of relational first-order structures.

\begin{definition}\label{defantihomo} Suppose that $\mc{A},\mc{B}$ are two 
$\sigma$-structures and that $\bar{f}:\mc{B}\rarr{}\mc{A}$ is a multifunction. We 
say that $\bar{f}$ is an \emph{antihomomorphism} if
$\neg R_i^\mc{B}(\un{b})$ in $\mc{B}$ then $\neg R_i^\mc{A}(\un{a})$ 
in $\mc{A}$ for all $\un{a}\in \un{b}\bar{f}$ and $R_i\in \sigma$.
\end{definition}

\begin{remark} This definition is equivalent to saying that 
$\bar{f}:\mc{B}\to\mc{A}$ is an antihomomorphism if for all $R_i\in\sigma$ and for 
all $\un{a}\in \un{b}\bar{f}$, then $\ri{\mc{A}}(\un{a})$ implies that 
$\ri{\mc{B}}(\un{b})$.
\end{remark}

Informally, an antihomomorphism is a multifunction that preserves non-relations. The motivation behind this definition is explained by the following alternate 
characterisation of antihomomorphisms.

\begin{lemma}\label{antihomo} Let $\mc{A},\mc{B}$ be two $\sigma$-structures. 
Then $f^c:\mc{B}\rarr{}\mc{A}$ is a surjective antihomomorphism if and only if 
$f:\mc{A}\rarr{}\mc{B}$ is a surjective homomorphism.
\end{lemma}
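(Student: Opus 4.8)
The plan is to prove both implications directly from the definitions, the crux being the translation of the membership condition $\un{a}\in\un{b}\bar{f}$ into the equation $\un{a}f=\un{b}$. Indeed, $\un{a}\in\un{b}\bar{f}$ means $a_i\in b_i\bar{f}$ for each $i$, that is $(b_i,a_i)\in f^c$, equivalently $a_if=b_i$; hence $\un{a}\in\un{b}\bar{f}$ holds precisely when $\un{a}f=\un{b}$ componentwise. I would record this equivalence first, since it drives both directions. The surjectivity half of the biconditional then needs no further work: by the discussion of converses above, $f^c$ is a surjective multifunction if and only if $f$ is a surjective function. It therefore remains only to match the homomorphism condition on $f$ with the antihomomorphism condition on $f^c$, under the standing assumption that $f$ is a surjection.

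For the implication from right to left, suppose $f:\mc{A}\to\mc{B}$ is a surjective homomorphism. Fixing $R_i\in\sigma$, a tuple $\un{b}$, and some $\un{a}\in\un{b}\bar{f}$ with $\neg\ri{\mc{B}}(\un{b})$, I argue contrapositively: were $\ri{\mc{A}}(\un{a})$ to hold, then, since $f$ is a homomorphism and $\un{a}f=\un{b}$, we would obtain $\ri{\mc{B}}(\un{a}f)=\ri{\mc{B}}(\un{b})$, contradicting $\neg\ri{\mc{B}}(\un{b})$. Thus $\neg\ri{\mc{A}}(\un{a})$, and $f^c$ is an antihomomorphism.

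For the reverse implication, suppose $f^c$ is a surjective antihomomorphism. Given $R_i\in\sigma$ and $\un{a}$ with $\ri{\mc{A}}(\un{a})$, put $\un{b}=\un{a}f$, so that $\un{a}\in\un{b}\bar{f}$. Applying the contrapositive form of the antihomomorphism condition recorded in the remark after \autoref{defantihomo}, $\ri{\mc{A}}(\un{a})$ forces $\ri{\mc{B}}(\un{b})=\ri{\mc{B}}(\un{a}f)$. Hence $f$ preserves every relation and is a homomorphism.

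The argument is a routine unwinding of definitions, and I anticipate no genuine obstacle. The one point to watch is the asymmetry of the quantifiers: each $\un{a}$ determines a single image tuple $\un{b}=\un{a}f$, whereas a given $\un{b}$ generally has many preimage tuples in $\un{b}\bar{f}$. The equivalence $\un{a}\in\un{b}\bar{f}\Leftrightarrow\un{a}f=\un{b}$ is exactly what makes the two quantifications line up, so establishing it cleanly at the outset is the step that deserves the most care.
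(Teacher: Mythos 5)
Your proof is correct and follows essentially the same route as the paper: both directions are a direct unwinding of the definitions via the equivalence $\un{a}\in\un{b}f^c \Leftrightarrow \un{a}f=\un{b}$, with the reverse direction using the contrapositive reformulation of the antihomomorphism condition recorded in the remark after \autoref{defantihomo}. Your explicit isolation of that equivalence at the outset is a slightly cleaner presentation of the same argument.
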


\begin{proof} Assume that $f:\mc{A}\rarr{}\mc{B}$ is a surjective homomorphism. 
As $f:A\rarr{}B$ is a surjective function we have that $f^c:B\rarr{}A$ is a 
surjective multifunction. Now suppose that $\neg R_i^\mc{B}(\un{b})$. As $f$ 
must preserve relations, we have $\neg R_i^\mc{A}(\un{a})$ whenever $\un{a}f = 
\un{b}$; this is precisely when $\un{a}\in\un{b}f^c$. Conversely, suppose 
that $f^c:\mc{B}\rarr{}\mc{A}$ is a surjective antihomomorphism; therefore 
$f:A\rarr{}B$ is a surjective function. Suppose also that $R_i^\mc{A}(\un{a})$ 
holds. As $f^c$ is an antihomomorphism, it follows that 
$\un{a}\notin\un{b}f^c$ for every $\un{b}$ such that $\neg 
R_i^\mc{B}(\un{b})$. Since $f$ is a function, it must be that 
$\un{a}\in\un{b}f^c$ for some $\un{b}$ such that $R_i^\mc{B}(\un{b})$; so 
$f$ is a homomorphism.
\end{proof}

\begin{remark}
Following this lemma, if $f:A\to B$ is a surjective homomorphism, we write $f^c = \bar{f}: B\to A$ in order to emphasise that the converse of $f$ is both a multifunction and antimonomorphism. Similarly, if $\bar{f}:B\to A$ is a surjective antihomomorphism, we write $\bar{f}^{c} = f: A\to B$ to emphasise that the converse of $\bar{f}$ is both a function and a homomorphism. The context for when we use this notation should be clear.

If $f:\mc{A}\rarr{}\mc{B}$ is any homomorphism, we can restrict the codomain to 
the image to see that $f:\mc{A}\rarr{}\mc{A}f$ is a surjective homomorphism; and 
hence $\bar{f}:\mc{A}f\rarr{}\mc{A}$ is a surjective 
antihomomorphism by \autoref{antihomo}. This technique will be used regularly in \autoref{sbackforth}.
\end{remark}

This result leads to an immediate corollary; an analogue of 
\autoref{conversecompo} (2) for $\sigma$-structures.

\begin{corollary}\label{conversecompofo} Let $\mc{A,B,C}$ be 
$\sigma$-structures, and $f:\mc{A}\to\mc{B}$ and $g:\mc{B}\to\mc{C}$ are 
surjective homomorphisms. Then $\overline{fg} = \bar{g}\bar{f}$ is a surjective 
antihomomorphism.\dne
\end{corollary}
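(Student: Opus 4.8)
The plan is to deduce \autoref{conversecompofo} directly from the results already established, treating it as an almost immediate consequence rather than a computation from first principles. The statement to prove is that if $f:\mc{A}\to\mc{B}$ and $g:\mc{B}\to\mc{C}$ are surjective homomorphisms, then $\overline{fg} = \bar{g}\bar{f}$ is a surjective antihomomorphism. There are really two assertions bundled together here: first, the equality of the two multifunctions $\overline{fg}$ and $\bar{g}\bar{f}$; and second, the claim that this multifunction is a surjective antihomomorphism.

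First I would dispose of the equality $\overline{fg} = \bar{g}\bar{f}$. Since $f$ and $g$ are functions, their composition $fg:\mc{A}\to\mc{C}$ is a function, and \autoref{conversecompo} (2) gives precisely that the converse $\overline{fg}$ equals $\bar{g}\bar{f}$ as relations (composed in the indicated order). So this half is immediate from the earlier lemma and requires no new argument. I would simply cite \autoref{conversecompo} (2).

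Next I would establish that $\overline{fg}$ is a surjective antihomomorphism. The clean route is to observe that the composition of two surjective homomorphisms is again a surjective homomorphism: $fg:\mc{A}\to\mc{C}$ is a homomorphism because homomorphisms compose (each preserves relations, so the composite does), and it is surjective because the composite of two surjective maps is surjective. Having noted that $fg$ is a surjective homomorphism, I would apply \autoref{antihomo} to $fg$ directly: the lemma says that $(fg)^c = \overline{fg}:\mc{C}\to\mc{A}$ is a surjective antihomomorphism if and only if $fg:\mc{A}\to\mc{C}$ is a surjective homomorphism. Since the latter holds, the former follows at once.

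I do not anticipate any genuine obstacle, since every ingredient is already in place; the only thing to be careful about is bookkeeping with the direction of composition and the barred-versus-unbarred notation, making sure that $\bar{g}\bar{f}$ (as opposed to $\bar{f}\bar{g}$) is the correct order, which is exactly what \autoref{conversecompo} (2) records. The mild subtlety worth a sentence is noting explicitly that $fg$ is surjective and a homomorphism before invoking \autoref{antihomo}, rather than trying to verify the antihomomorphism condition for $\bar{g}\bar{f}$ by hand through the two individual antihomomorphisms $\bar{f}$ and $\bar{g}$ — the latter approach would work via \autoref{compo} (1) to see the composite is a multifunction, but routing everything through $fg$ and \autoref{antihomo} is cleaner and avoids re-checking the non-relation-preservation property directly.
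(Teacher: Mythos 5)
Your proposal is correct and matches the paper's intent exactly: the paper marks this corollary as immediate (no proof given), pointing to \autoref{antihomo} and \autoref{conversecompo} (2) as the ingredients, which is precisely how you assemble it. Your added observation that $fg$ is itself a surjective homomorphism before invoking \autoref{antihomo} is the right (and only) bookkeeping step the paper leaves implicit.
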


Note that if $f:\mc{A}\rarr{}\mc{B}$ is a bijective homomorphism, then 
$\bar{f}:\mc{B}\rarr{}\mc{A}$ is an bijective function from $B$ to $A$ that 
preserves non-relations; this is the definition of an \emph{antimonomorphism} 
(see \cite{coleman2018permutation}). Furthermore, if 
$f:\mc{A}\rarr{}\mc{B}$ is a isomorphism, then $\bar{f}:\mc{B}\rarr{}\mc{A}$ is 
exactly $f^{-1}$, the inverse isomorphism of $f$. Having determined that the product of two multifunctions is again a multifunction in
\autoref{compo}, an easy composition lemma for antihomomorphisms follows suit.

\begin{lemma}\label{compofo} Let $\mc{A,B,C}$ be $\sigma$-structures. 
Suppose that $\bar{f}:\mc{A}\rarr{}\mc{B}$ and $\bar{g}:\mc{B}\rarr{}\mc{C}$ are 
antihomomorphisms. Then their composition $\bar{f}\bar{g}:\mc{A}\rarr{}\mc{C}$ is an antihomomorphism.\dne
\end{lemma}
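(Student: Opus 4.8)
The plan is to reduce \autoref{compofo} directly to \autoref{compo} (1) together with the definition of an antihomomorphism. By \autoref{compo} (1), since $\bar{f}:\mc{A}\to\mc{B}$ and $\bar{g}:\mc{B}\to\mc{C}$ are multifunctions, their composition $\bar{f}\bar{g}\subseteq A\times C$ is again a multifunction; so the only substantive thing to verify is that $\bar{f}\bar{g}$ preserves non-relations in the sense of \autoref{defantihomo}.

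First I would unpack the composition explicitly. For $\un{a}\in A^n$, an element $\un{c}\in C^n$ lies in $\un{a}(\bar{f}\bar{g})$ precisely when there exists an intermediate tuple $\un{b}\in \un{a}\bar{f}$ with $\un{c}\in\un{b}\bar{g}$; this is just the relational composition written out coordinatewise. Then I would take an arbitrary $R_i\in\sigma$ and an arbitrary tuple $\un{a}\in A^n$ with $\neg\ri{\mc{A}}(\un{a})$, and chase the non-relation forward. Fix any $\un{c}\in\un{a}(\bar{f}\bar{g})$ and choose a witnessing $\un{b}$ with $\un{b}\in\un{a}\bar{f}$ and $\un{c}\in\un{b}\bar{g}$. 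Since $\bar{f}$ is an antihomomorphism and $\un{b}\in\un{a}\bar{f}$, the hypothesis $\neg\ri{\mc{A}}(\un{a})$ gives $\neg\ri{\mc{B}}(\un{b})$. Applying the antihomomorphism property of $\bar{g}$ to $\un{b}$ and $\un{c}\in\un{b}\bar{g}$ then yields $\neg\ri{\mc{C}}(\un{c})$. As $\un{c}$ was an arbitrary element of $\un{a}(\bar{f}\bar{g})$ and $R_i$ was arbitrary, this is exactly the condition of \autoref{defantihomo}, so $\bar{f}\bar{g}$ is an antihomomorphism.

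I do not expect any genuine obstacle here; the result is a routine composition lemma whose only real content is the coordinatewise description of $\un{a}(\bar{f}\bar{g})$. The point most in need of care is making sure that the witnessing intermediate tuple $\un{b}$ is quantified correctly: the definition of $\un{y}\bar{f}$ as a \emph{set} of tuples means one must check that there is a single $\un{b}$ simultaneously witnessing both $\un{b}\in\un{a}\bar{f}$ and $\un{c}\in\un{b}\bar{g}$, rather than treating the coordinates independently. This follows from the definition of relational composition coordinate by coordinate, and the direction of the two implications (non-relation to non-relation) chains correctly through the intermediate structure $\mc{B}$. One could alternatively phrase the whole argument via \autoref{antihomo} and \autoref{conversecompofo} by restricting to surjective pieces, but the direct verification above is cleaner and avoids needing surjectivity.
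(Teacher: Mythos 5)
Your proof is correct. The paper in fact omits the proof of this lemma entirely (the statement is followed immediately by an end-of-proof mark, with the preceding text describing it as ``an easy composition lemma'' following from \autoref{compo}), and your direct verification---invoking \autoref{compo} (1) for the multifunction part and then chasing non-relations through a single witnessing intermediate tuple $\un{b}$, whose existence you rightly observe follows from the coordinatewise definition of $\un{a}\bar{f}$---is precisely the routine argument the paper intends.
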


\begin{remark} We note that as every antimonomorphism and isomorphism is also 
an antihomomorphism, the product $\bar{f}\bar{g}$ of any antihomomorphism $\bar{f}$ with any 
antimonomorphism or isomorphism $\bar{g}$ is again an antihomomorphism. This fact 
turns out to be crucial in the statement of a suitable amalgamation property for 
the back part of the back-and-forth argument. Furthermore, the product of two 
antimonomorphisms (or an antihomomorphism and an isomorphism) is again an 
antimonomorphism.
\end{remark}

\section{Proof of \autoref{BackForth}}\label{sbackforth}

We now move on to discussing extension of finite partial maps of a $\sigma$-structure to surjective endomorphisms; this is when 
XZ $\in\mf{B}$ (see page \pageref{fbni}, \autoref{fbni}). Due to the lack of symmetry when working with homomorphisms as 
opposed to isomorphisms, we must provide a backwards condition to achieve the 
back part of the required back-and-forth argument. Similar to the more 
conventional amalgamation properties, this backwards condition is defined on 
finite structures. This will involve using the concept of antihomomorphisms 
outlined in \autoref{defantihomo} in three distinct cases; antihomomorphisms 
($\bart{H}$) as converse homomorphisms (H), antimonomorphisms ($\bart{M}$) as the converse of 
monomorphisms (M), and inverse isomorphisms ($\bart{I}$) of isomorphisms (I). Note that the 
classes I and $\bart{I}$ coincide; we use the barred version when applicable 
throughout for notational consistency. It can be seen that 
$\bart{I}\subseteq\bart{M}\subseteq\bart{H}$. We will write $\bar{f}:B\rarr{}A$ to mean some multifunction of type 
$\bart{X}\in\{\bart{H},\bart{M},\bart{I}\}$ from $B$ to $A$ (see 
\autoref{maptype}). 
\begin{figure}[h]
\renewcommand{\arraystretch}{1.2}
\centering
\begin{tabular}{c c c c}
\hline 
Type & Map & Converse type & Converse map \\ 
\hline 
H & homomorphism & $\bart{H}$ & antihomomorphism \\ 
\hline 
M & monomorphism & $\bart{M}$ & antimonomorphism \\ 
\hline 
I & isomorphism & $\bart{I}$ & isomorphism \\
\hline 
\end{tabular}
$\renewcommand{\arraystretch}{2}\qquad\qquad\begin{array}{c|ccc}
 \circ & \bart{H} & \bart{M} & \bart{I} \\
 \hline
 \bart{H} & \bart{H} & \bart{H} & \bart{H} \\
 \bart{M} & \bart{H} & \bart{M} & \bart{M} \\
 \bart{I} & \bart{H} & \bart{M} & \bart{I} \\
\end{array}$ 
\caption{Types of finite partial maps and multifunctions between structures, and how they are composed.}\label{maptype}
\end{figure}

This notation is used in another manner: if $f:A\to B$ is a 
surjective homomorphism of type X, we write $\bar{f}:B\to A$ to be the 
corresponding surjective antihomomorphism of type $\bart{X}$. This is uniquely 
determined by \autoref{antihomo}; see \autoref{maptype} 
for corresponding pairs. The context of when we use this will usually 
be clear. We also recall \autoref{compofo} and its following remarks; the 
composition of two multifunctions of type $\bart{H},\bart{M},\bart{I}$ is again 
a multifunction of type $\bart{H},\bart{M},\bart{I}$; a composition table is given in \autoref{maptype}.

We note that if Z = E then it is a surjective map of type Y = H; likewise, when 
Z = B we have that Y = M and when Z = A we have that Y = I. This relation is 
codified by the following set of pairs:\begin{equation}\label{srel} \ms{S} = 
\{\mbox{(E, H), (B, M), (A, I)}\}.\end{equation} It follows that any XZ-homogeneous 
structure $\mc{M}$ is also XY-homogeneous, where the two are related by the 
relevant pair (Z,Y) $\in\ms{S}$. Therefore, we need to ensure that any 
XZ-homogeneous structure $\mc{M}$ we construct is also XY-homogeneous for the 
appropriate Y; so results in \autoref{sforth} should be satisfied by $\mc{M}$.

As mentioned previously, new properties are required to take care of extension 
and amalgamation in the backwards direction to ensure the map is surjective. 
This is achieved by writing generalised conditions utilising the concept of antihomomorphisms. Throughout, we let X,Y $\in\{$H, M, I$\}$, 
$\bart{X},\bart{Y}\in\{\bart{H},\bart{M},\bart{I}\}$, and Z $\in\{$E, B, A$\}$. 
To avoid any potential confusion, whenever we refer to a map of type Z being a 
surjective map of type Y, the symbol Z is always related to Y in the manner 
illustrated in $\ms{S}$ (see \autoref{srel}).

Motivated by the desire to take care of the `back' part of a back-and-forth argument that would extend a finite partial map of $\mc{M}$ to a surjective endomorphism of $\mc{M}$, we can state the \emph{$\bart{XY}$-extension property} ($\bart{XY}$EP) along similar lines to the XYEP in \autoref{sforth}.

\begin{quotation}($\bart{XY}$EP) Suppose that $\mc{M}$ is a structure with age 
$\ms{C}$. For all $A\subseteq B\in\ms{C}$ and a multifunction 
$\bar{f}:A\rarr{}\mc{M}$ of type $\bart{X}$ such that $A\bar{f}$ is finite, there exists a multifunction 
$\bar{g}:B\rarr{}\mc{M}$ of type $\bart{Y}$ extending $\bar{f}$.
\end{quotation}

Notice that this property differs slightly from previous extension properties as it requires an extra finiteness condition on the image of the multifunction $\bar{f}$. There could exist multifunctions $\bar{f}:A\to\mc{M}$ of type $\bart{H}$ where the image $A\bar{f}$ is an infinite set (see \autoref{infex}); this would cause problems in the proofs of \autoref{xzep} and \autoref{xzexist}.

We turn our attention to finding necessary and sufficient conditions for 
XZ-homogeneity, to be used throughout the proof of \autoref{BackForth}. As 
stated above, we need to ensure that any XZ-homogeneous structure we construct 
is also XY-homogeneous for the appropriate Y. It follows that such a structure 
must satisfy all the conditions outlined in \autoref{xyep}; in particular, XY 
must be in $\mf{I}$ for part (2). With these restrictions in mind, and a desire 
to obtain the most general result possible, we show that both the XYEP and 
$\bart{XY}$EP are necessary conditions for XZ-homogeneity in general, and that 
it these are also sufficient when the extended map of type Y is also a map of 
type X.

\begin{proposition}\label{xzep} Let $\mc{M}$ be a $\sigma$-structure with age 
$\ms{C}$.
\begin{enumerate}[(1)]
\item Suppose that XZ $\in\mf{B}$. If $\mc{M}$ is XZ-homogeneous, then $\mc{M}$ 
has both the XYEP and the $\bart{XY}$EP.
\item Suppose that XZ $\in\mf{B}\cap\mf{I}$. If $\mc{M}$ has the XYEP and the 
$\bart{XY}$EP, then $\mc{M}$ is XZ-homogeneous.
\end{enumerate}
\end{proposition}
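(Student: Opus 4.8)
The plan is to follow the template of \autoref{xyep}, exploiting the pairing $\ms{S}$ (a map of type Z is a surjective map of type Y) together with the correspondence of \autoref{antihomo} between surjective maps of type X and surjective antihomomorphisms of type $\bart{X}$. For part (1), the XYEP is essentially free: a map of type Z is in particular a surjective map of type Y, so an XZ-homogeneous $\mc{M}$ is XY-homogeneous, and since the associated XY lies in $\mf{F}$, \autoref{xyep}(1) yields the XYEP. For the $\bart{XY}$EP, take $A\subseteq B\in\ms{C}$ and a multifunction $\bar{f}:A\rarr{}\mc{M}$ of type $\bart{X}$ with $A\bar{f}$ finite. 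I would fix an embedding $\theta:B\to\mc{M}$ and pass to the converse $f:A\bar{f}\to A$, a surjective map of type X, so that $h=f\,\theta|_A:A\bar{f}\to A\theta$ is a surjective map of type X between finite substructures of $\mc{M}$. Extending $h$ to a map $\alpha:\mc{M}\to\mc{M}$ of type Z by XZ-homogeneity and taking converses, \autoref{antihomo} makes $\bar{\alpha}$ a surjective multifunction of type $\bart{Y}$, whence $\theta\bar{\alpha}:B\to\mc{M}$ is of type $\bart{Y}$ by the composition table in \autoref{maptype}.

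The main obstacle is that $\theta\bar{\alpha}$ need not extend $\bar{f}$ in the strict sense $a\bar{f}=a(\theta\bar{\alpha})$ demanded by the definition of extension: when $\bart{X}=\bart{H}$ the map $\alpha$ is merely surjective, so the preimage $(a\theta)\bar{\alpha}$ may be strictly larger than $a\bar{f}$. The fix is to cut $\theta\bar{\alpha}$ down to a suitable sub-multifunction $\bar{g}$: set $a\bar{g}=a\bar{f}$ for $a\in A$ (a subset of $a(\theta\bar{\alpha})$, since $\alpha$ extends $h$) and, for each $b\in B\smallsetminus A$, let $b\bar{g}$ be a single point of $(b\theta)\bar{\alpha}$, which is nonempty as $\alpha$ is surjective. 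Since a subset of a multifunction is again a multifunction and shrinking the value-sets only weakens the implication on non-relations defining an antihomomorphism, $\bar{g}$ is a totally defined multifunction of type $\bart{Y}$ extending $\bar{f}$, as required. For $\bart{X}\in\{\bart{M},\bart{I}\}$ the extension $\alpha$ is bijective, no shrinking is necessary, and $\theta\bar{\alpha}$ already works.

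For part (2), note that XZ $\in\mf{B}\cap\mf{I}$ forces the associated XY into $\mf{F}\cap\mf{I}$; in particular a map of type Y is also of type X and a multifunction of type $\bart{Y}$ is also of type $\bart{X}$, which is exactly what allows the construction to be iterated. I would run a back-and-forth argument to extend a given finite partial map $f:A\to B$ of type X to a surjective map $\alpha:\mc{M}\to\mc{M}$ of type Y (equivalently, of type Z). Enumerating $M=\{m_0,m_1,\dots\}$, I build an increasing chain of finite maps $f_k$ of type Y (with $f_0=f$ of type X), alternating two moves. A forth move enlarges the domain: treating $f_k$ as a map of type X and applying the XYEP to $A_k\subseteq A_k\cup\{m_i\}$ produces a type-Y extension defined at $m_i$, so the limit is total. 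A back move enlarges the image: the converse $\overline{f_k}$ is a multifunction of type $\bart{Y}$, hence of type $\bart{X}$, with finite image, so the $\bart{XY}$EP applied to $B_k\subseteq B_k\cup\{m_j\}$ returns a type-$\bart{Y}$ extension whose converse is a surjective type-Y map $g$ onto $B_k\cup\{m_j\}$ agreeing with $f_k$ on $A_k$ and sending no point outside $A_k$ into $B_k$. Restricting $g$ to $A_k$ together with one chosen preimage of $m_j$ keeps the stage finite while placing $m_j$ in the image, so the limit is surjective. The union of the chain is then totally defined, surjective and of type Y -- type Y being preserved under unions of chains because $\sigma$ is relational with finitary relations -- and hence of type Z, extending $f$; so $\mc{M}$ is XZ-homogeneous. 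The book-keeping obstacle in this half is precisely to enlarge the image by a genuinely new point at each back step while keeping the stages finite, which the single-preimage choice achieves.
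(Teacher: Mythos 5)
Your proposal is correct and follows essentially the same route as the paper's proof: in part (1) you obtain the XYEP from \autoref{xyep}, convert $\bar{f}$ to a surjective map of type X via \autoref{antihomo}, extend by XZ-homogeneity and take converses, and part (2) is the same back-and-forth alternating the XYEP (forth) with the $\bart{XY}$EP applied to $\bar{f}_k$ (back). Your two extra care points --- shrinking the value-sets of $\theta\bar{\alpha}$ so that extension holds as an equality of value-sets, and selecting a single new preimage at each back step to keep the stages finite --- tighten details the paper passes over silently; the only slip is the closing remark of part (1), since for XZ $\in\{$ME, IE$\}$ the map $\alpha$ is merely surjective and the shrinking is still needed even though $\bart{X}\in\{\bart{M},\bart{I}\}$, but your uniform construction already covers those cases.
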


\begin{proof} (1) As $\mc{M}$ is XZ-homogeneous, it is also XY-homogeneous and 
so it has the XYEP by \autoref{xyep} (1). Now, suppose that $A,B\in\ms{C}$ and 
$\bar{f}:A\rarr{}\mc{M}$ is a multifunction of type $\bart{X}$ with $A\bar{f}$ finite. As $\ms{C}$ is 
the age of $\mc{M}$, it follows that $\mc{M}$ contains copies $A'\subseteq B'$ 
of $A$ and $B$ and there are isomorphisms $\theta:B\to B'$ and 
$\theta^{-1}:B'\to B$. Restrict the codomain of $\bar{f}$ to its image to find a 
map $\bar{f}':A\rarr{}A\bar{f}$; as this is a surjective multifunction of type 
$\bart{X}$, we have that $\theta^{-1}\bar{f}'=\bar{h}:A'\rarr{}A\bar{f}$ is also a 
surjective multifunction of type $\bart{X}$. By \autoref{antihomo}, the converse 
$h:A\bar{f}\to A'$ of $\theta|_A^{-1}\bar{f}'$ is a surjective map of type X with finite domain $A\bar{f}$; as 
$\mc{M}$ is XZ-homogeneous, extend $h$ to a map $\beta:\mc{M}\to\mc{M}$ of type 
Z. So $\beta\theta^{-1}:\mc{M}\to B$ is a surjective map of type Y; by 
\autoref{conversecompofo}, define $\bar{g} = \theta\bar{\beta}:B\to\mc{M}$ to be 
the corresponding surjective multifunction of type $\bart{Y}$. We need to show 
it extends $\bar{f}$. As $\beta$ extends $h$, then $\bar{\beta}$ extends $\bar{h}$. 
So for all $a\in A$: \[a\bar{f} = a\theta\theta^{-1}\bar{f} = a\theta \bar{h} = 
a\theta\bar{\beta}\] and hence $\mc{M}$ has the $\bart{XY}$EP.\\

(2) Now suppose that XZ $\in\mf{B}\cap\mf{I}$; so a multifunction of type 
$\bart{Y}$ implies that it is also a multifunction of type $\bart{X}$. Suppose 
also that $\mc{M}$ has the XYEP and the $\bart{XY}$EP, and that $f:A\rarr{}B$ is 
a map of type X between substructures of $\mc{M}$. We use a back-and-forth 
argument to show that $\mc{M}$ is XZ-homogeneous.

Set $A=A_0$, $B=B_0$ and $f_0 = f$, and assume that we have extended $f$ to a 
surjective map $f_k:A_k\rarr{}B_k$ of type Y (and hence of type X, by 
assumption), where each $A_i\subseteq A_{i+1}$ and $B_i\subseteq B_{i+1}$ for all 
$0\leq i\leq k-1$. Note also that $A_k$ and $B_k$ are finite for all $k\in\mb{N}$. Furthermore, as $\mc{M}$ is countable we can enumerate the 
elements of $M = \{m_0,m_1,\ldots\}$. 

If $k$ is even, select a point $m_i\in\mc{M}\smallsetminus A_k$ where $i$ is the 
smallest number such that $m_i\notin A_k$, so $A_k\cup\{m_i\}\subseteq\mc{M}$. 
Using the XYEP, extend $f_k$ to a map $f'_{k+1}:A_k\cup\{m_i\}\rarr{}B_k'$ of 
type Y; by restricting the codomain of $f'_{k+1}$ to its image, it follows that 
$f_{k+1}:A_k\cup\{m_i\}\rarr{}B_k\cup\{m_if'_{k+1}\}$ is a surjective map of 
type Y extending $f_k$.

If $k$ is odd, choose a point $m_i\in\mc{M}\smallsetminus B_k$ where $i$ is 
the smallest number such that $m_i\notin B_k$; so 
$B_k\cup\{m_i\}\subseteq\mc{M}$. Note that as $f_k$ is a surjective map of type 
X, we have that $\bar{f}_k:B_k\rarr{}A_k$ is a surjective multifunction of type 
$\bart{X}$. As $A_k$ is finite, we can use the $\bart{XY}$EP to extend $\bar{f}$ to a multifunction 
$\bar{f}_{k+1}':B_k\cup\{m_i\}\to \mc{M}$ of type $\bart{Y}$. Restricting the 
codomain of $\bar{f}_{k+1}$ to its image gives a surjective multifunction 
$\bar{f}_{k+1}:B_k\cup\{m_i\}\to A_k\cup m_i\bar{f}_{k+1}$ of type $\bart{Y}$, 
where $m_i\bar{f}_{k+1}=\{y\in\mc{M} : (y,m_i)\in\bar{f}_{k+1}\}$ is a non-empty 
set. As $\bar{f}_{k+1}$ is a surjective multifunction of type $\bart{Y}$, we 
have that $f_{k+1}:A_k\cup m_i\bar{f}_{k+1}\to B_k\cup\{m_i\}$ is a surjective 
map of type Y extending $f_k$.  

Since XZ $\in\mf{B}\cap\mf{I}$, a map of type Y is also a map of type X; so we 
can use the XYEP and $\bart{XY}$EP to repeat this process infinitely many times. 
By ensuring that each point of $\mc{M}$ appears at both an odd and even step, we 
extend $f$ to a surjective map $\beta$ of type Y; which is a map of 
type Z and so $\mc{M}$ is XZ-homogeneous.
\end{proof}

\begin{remark} Together, \autoref{xyep} and \autoref{xzep} re-prove \cite[Lemma 
1.1]{lockett2014some}, which states that a countable structure $\mc{M}$ is II 
(MI, HI)-homogeneous if and only if it is IA (MA, HA)-homogeneous. For if a 
structure $\mc{M}$ is HI-homogeneous, then it has the HIEP by \autoref{xyep}; 
this implies that every homomorphism between finite substructures of $\mc{M}$ is 
an isomorphism. Since this happens, it follows that every antihomomorphism 
between finite substructures of $\mc{M}$ is an isomorphism. Finally, as $\mc{M}$ 
has the HIEP it must have the $\bart{HI}$EP as well and so $\mc{M}$ is 
HA-homogeneous by \autoref{xzep}. A similar argument works for the equality 
concerning MI-homogeneous structures. In the II case, the IIEP is the standard 
extension property (EP) from Fra\"{i}ss\'{e}'s theorem, and so any structure $\mc{M}$ 
with the IIEP is homogeneous by the same result.
\end{remark}

We now state our new amalgamation property to accommodate the back 
portion of a back-and-forth argument; this is the \emph{$\bart{XY}$-amalgamation 
property} ($\bart{XY}$AP):

\begin{quotation}($\bart{XY}$AP) Let $\ms{C}$ be a class of finite $\sigma$-structures. 
We say that $\ms{C}$ has the $\bart{XY}$AP if for all $A,B_1,B_2\in\ms{C}$, 
multifunction $\bar{f}_1:A\rarr{}B_1$ of type $\bart{X}$ and embedding 
$f_2:A\rarr{}B_2$, there exists a $D\in\ms{C}$, embedding $g_1:B_1\rarr{}D$ and 
multifunction $\bar{g}_2:B_2\rarr{}D$ of type $\bart{Y}$ such that $\bar{f}_1g_1 
= f_2\bar{g}_2$ (see \autoref{dbxyap}).
\end{quotation}

\begin{figure}[h]
\centering
\begin{tikzpicture}[node distance=2cm]
\node(D) {$\exists D$};
\node(R) [below right of=D] {$B_2$};
\node(L) [below left of=D] {$B_1$};
\node(H) [below right of=L] {$A$};
\path[->,thick,dotted] (R) edge node[above right] {$\bar{g}_2$} (D);
\path[right hook->,thick,dotted] (L) edge node[above left] {$g_1$} (D);
\path[right hook->] (H) edge node[below right] {$f_2$} (R);
\path[->] (H) edge node[below left] {$\bar{f}_1$} (L);
\end{tikzpicture}
\caption{The $\bart{XY}$-amalgamation property ($\bart{XY}$AP).}\label{dbxyap}
\end{figure}

Note that this property represents nine different amalgamation conditions. This 
corresponds to one for each class XZ $\in\mf{B}$, where (Z,Y) $\in\ms{S}$ (see 
\autoref{srel} on page \pageref{srel}) and X and $\bart{X}$ are related as in 
\autoref{maptype}. For examples, the $\bart{II}$AP is the standard amalgamation 
property, and the $\bart{MM}$AP is the BAP of \cite{coleman2018permutation}. 

We can now prove \autoref{BackForth} (1). Before we do, we state a straightforward yet important fact about surjective endomorphisms of an infinite first-order structure $\mc{M}$.

\begin{lemma}\label{finitepreimage} Let $\mc{M}$ be a $\sigma$-structure, with 
$\mc{A}$ a finite substructure of $\mc{M}$. Then for any $\alpha\in$ 
Epi$(\mc{M})$, there exists a finite structure $\mc{B}\subseteq \mc{M}$ such 
that $\mc{B}\alpha = \mc{A}$.\dne
\end{lemma}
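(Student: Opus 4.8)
The plan is to exploit surjectivity of $\alpha$ directly, using the finiteness of $\mc{A}$ to keep the chosen preimage finite. First I would write the underlying set of $\mc{A}$ as $A = \{a_1,\ldots,a_n\}$, which is finite by hypothesis. Since $\alpha \in$ Epi$(\mc{M})$ is a surjective endomorphism, the underlying function $\alpha$ is onto $M$, so for each $1 \leq i \leq n$ I may choose a point $b_i \in M$ with $b_i\alpha = a_i$. Setting $B = \{b_1,\ldots,b_n\}$ then gives a finite subset of $M$ with $B\alpha = A$ (indeed the $b_i$ are automatically distinct, since distinct $a_i$ force distinct preimages).

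Next I would let $\mc{B}$ be the substructure of $\mc{M}$ induced on the set $B$; this is a finite substructure of $\mc{M}$. Because $\mc{A}$ is itself the substructure of $\mc{M}$ induced on $A$, and the image set $B\alpha$ is exactly $A$, the image structure $\mc{B}\alpha$ is the substructure of $\mc{M}$ induced on $A$, which is precisely $\mc{A}$. Hence $\mc{B}\alpha = \mc{A}$, as required.

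There is essentially no real obstacle here beyond bookkeeping: the two points to observe are that finitely many preimage choices suffice (which holds precisely because $\mc{A}$ is finite) and that only the surjectivity of $\alpha$ is needed, not its injectivity — and this is exactly what membership in Epi$(\mc{M})$ supplies. It is worth emphasising that the conclusion is about the underlying set map $B\alpha = A$, with the structures compared as induced substructures of $\mc{M}$; no appeal to how $\alpha$ acts on relations is needed to identify $\mc{B}\alpha$ with $\mc{A}$. The finiteness of $\mc{B}$ is the genuinely useful content of the lemma, since it is what will allow the finiteness hypothesis on $A\bar{f}$ in the $\bart{XY}$EP to be met during the back steps of the back-and-forth construction in the proof of \autoref{BackForth}.
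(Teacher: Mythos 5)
Your proof is correct, and it is exactly the elementary argument the paper has in mind: the lemma is stated with its proof omitted as immediate (the \verb|\dne| marks it as such), and choosing one preimage per element of the finite set $A$ and taking the induced substructure on those preimages is the intended justification. Your closing remark that $\mc{B}\alpha$ is to be read as the induced substructure on the image set is consistent with how the lemma is applied in the proof of Proposition~\ref{xzap}.
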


\begin{proposition}[\autoref{BackForth} (1)]\label{xzap} Suppose that XZ $\in\mf{B}$. If a structure 
$\mc{M}$ is XZ-homogeneous, then Age$(\mc{M})$ has the XYAP and the 
$\bart{XY}$AP.
\end{proposition}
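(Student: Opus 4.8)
The plan is to treat the two required properties separately, since the XYAP half is essentially free. As XZ $\in\mf{B}$, there is a unique pair $(Z,Y)\in\ms{S}$, and any XZ-homogeneous structure is automatically XY-homogeneous (a map of type Z is in particular a surjective map of type Y). Hence Age$(\mc{M})$ has the XYAP directly by \autoref{xyap}. All the genuine work lies in verifying the $\bart{XY}$AP, and the natural strategy is to mirror the proof of \autoref{xyap} but pass through converses, using the `back' half of XZ-homogeneity to produce a surjective endomorphism and then cutting it down to a finite amalgam.

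Concretely, I would start with $A,B_1,B_2\in$ Age$(\mc{M})$, a multifunction $\bar{f}_1:A\rarr{}B_1$ of type $\bart{X}$ (taken surjective, as maps between finite substructures may be), and an embedding $f_2:A\rarr{}B_2$; without loss of generality $f_2$ is the inclusion with $A\subseteq B_2\subseteq\mc{M}$, and I fix a copy $B_1\subseteq\mc{M}$. By \autoref{antihomo}, $\bar{f}_1$ is the converse of a surjective map $f_1:B_1\rarr{}A$ of type X between finite substructures of $\mc{M}$. Using XZ-homogeneity, I extend $f_1$ to a map $\beta:\mc{M}\rarr{}\mc{M}$ of type Z; in particular $\beta\in$ Epi$(\mc{M})$ and $\beta$ is a surjective map of type Y, with $B_1\beta = B_1 f_1 = A$.

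The key step is to build a \emph{finite} amalgam on which the converse of $\beta$ restricts correctly. The main obstacle is exactly that the full converse $\bar{\beta}$ has infinite images in general (the preimage sets $a\bar{\beta}$ can be infinite; cf.\ \autoref{infex}), so $D$ cannot be taken to be the image of $B_2$ under $\bar{\beta}$. This is where \autoref{finitepreimage} is essential: applied to the finite substructure $B_2\smallsetminus A$ and the epimorphism $\beta$, it yields a finite $C\subseteq\mc{M}$ with $C\beta = B_2\smallsetminus A$. I then set $D = B_1\cup C$ (induced structure); note $C\cap B_1=\emptyset$ since $B_1\beta = A$ while $C\beta\cap A=\emptyset$. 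The restriction $\beta|_D:D\rarr{}B_2$ is then a surjective map of type Y, because $B_1\beta = A$ and $C\beta = B_2\smallsetminus A$ together give $D\beta = B_2$. I take $g_1:B_1\rarr{}D$ to be the inclusion and $\bar{g}_2 = \overline{\beta|_D}:B_2\rarr{}D$, which is a multifunction of type $\bart{Y}$ by \autoref{antihomo}, and declare $D\in$ Age$(\mc{M})$ as it is a finite substructure of $\mc{M}$.

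Finally I would verify the commuting condition $\bar{f}_1 g_1 = f_2\bar{g}_2$ pointwise on $A$. For $a\in A$ the left-hand side gives $a\bar{f}_1 = \{b\in B_1 : bf_1 = a\}\subseteq B_1\subseteq D$, while the right-hand side gives $a\bar{g}_2 = \{d\in D : d\beta = a\}$. Since no point of $C$ maps into $A$ under $\beta$ (we arranged $C\beta = B_2\smallsetminus A$) and $\beta$ extends $f_1$ on $B_1$, this latter set equals $\{b\in B_1 : bf_1 = a\}$, which coincides with $a\bar{f}_1$; hence the two multifunctions agree and the $\bart{XY}$AP holds. The crux of the argument is thus the interplay between \autoref{finitepreimage} (to keep $D$ finite) and the bookkeeping that confines all preimages of $A$ inside $D$ to $B_1$, so that the amalgamation square genuinely commutes.
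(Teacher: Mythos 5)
Your overall strategy coincides with the paper's: the XYAP half is immediate from \autoref{xyap}, and for the $\bart{XY}$AP you pass to the converse of $\bar{f}_1$, extend it to a map $\beta$ of type Z by XZ-homogeneity, and invoke \autoref{finitepreimage} to cut the (potentially infinite) preimage down to a finite amalgam inside $\mc{M}$. Your pointwise verification of $\bar{f}_1g_1=f_2\bar{g}_2$ is in fact tidier than the paper's ``it is easy to check'': by taking $C$ to be a finite preimage of $B_2\smallsetminus A$ rather than of all of $B_1\beta\cup B_2$, you guarantee that the only elements of $D$ that $\beta$ sends into $A$ are the ones already prescribed by $\bar{f}_1$.

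There is, however, a genuine gap at the first step: you assume ``without loss of generality'' that $\bar{f}_1:A\rarr{}B_1$ is surjective. That reduction is legitimate for extension properties, where the codomain is not part of the data, but not for an amalgamation property: here $B_1$ is prescribed and must be embedded in its entirety into $D$ via $g_1$. When $A\bar{f}_1\subsetneq B_1$, the converse of $\bar{f}_1$ is only a surjective map of type X from the substructure $A\bar{f}_1$ onto $A$, so after extending to $\beta$ you know only that $A\subseteq B_1\beta$, not that $B_1\beta=A$. Consequently $C\cap B_1=\emptyset$ can fail and, more seriously, a vertex of $B_1\smallsetminus A\bar{f}_1$ may be sent by $\beta$ into $A$, in which case $a\bar{g}_2=\{d\in D\,:\,d\beta=a\}$ strictly contains $a\bar{f}_1$ and the required set equality breaks. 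The paper avoids this by first restricting the codomain of $\bar{f}_1$ to its image $A\bar{f}_1$, taking the converse $f_1:A\bar{f}_1\rarr{}A$, extending that map, and then arranging for the whole of $B_1$ to sit inside the finite amalgam. Your argument is repaired the same way: run your construction with $A\bar{f}_1$ in place of $B_1$, set $D=B_1\cup C$ with $C$ a finite preimage of $B_2\smallsetminus A$, keep $g_1$ the inclusion, and define $\bar{g}_2$ as the converse of $\beta$ restricted to $A\bar{f}_1\cup C$ followed by the inclusion into $D$ (which preserves the type $\bart{Y}$ by \autoref{compofo} and the remark following it); your commuting-square computation then goes through verbatim.
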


\begin{proof} As $\mc{M}$ is XZ-homogeneous then it is XY-homogeneous and so has 
the XYAP by \autoref{xyap}. To show that Age$(\mc{M})$ has the $\bart{XY}$AP, 
suppose that $A,B_1,B_2\in$ Age$(\mc{M})$, $\bar{f}_1:A\rarr{}B_1$ is a 
multifunction of type $\bart{X}$ and $f_2:A\rarr{}B_2$ is an embedding. We can 
assume without loss of generality that $A,B_1,B_2$ are actually substructures of 
$\mc{M}$ and that $f_2$ is the inclusion mapping. 

By restricting the codomain of $\bar{f}_1$ to its image, 
$\bar{f}_1:A\rarr{}A\bar{f}_1$ is a surjective multifunction of type $\bart{X}$; 
hence the converse $f_1:A\bar{f}_1\rarr{}A$ of $\bar{f}_1$ is a surjective map 
of type X. Use XZ-homogeneity to extend $f_1$ to a map 
$\beta:\mc{M}\rarr{}\mc{M}$ of type Z; and so a surjective map of type Y. We see 
that $B_1\beta$ is a structure containing $A$, and that 
$\beta|_{B_1}:B_1\rarr{}B_1\beta$ extends $f_1$. Define $D = B_1\beta \cup B_2$. 
As $\beta$ is surjective, there exists a finite substructure $C$ such that 
$C\beta = D$ by \autoref{finitepreimage}. Now, define the map $g_1:B_1\rarr{}C$ 
to be the inclusion map. Since $\beta$ is a surjective map of type Y, 
$\bar{\beta}:\mc{M}\rarr{}\mc{M}$ is a surjective multifunction of type 
$\bart{Y}$ by \autoref{antihomo}. Therefore 
$\bar{\beta}|_{B_2}:B_2\rarr{}B_2\bar{\beta}$ is a surjective multifunction of 
type $\bart{Y}$; furthermore, $B_2\bar{\beta}\subseteq C$ as $B_2\subseteq D$. 
Define $\bar{g}_2:B_2\rarr{}C$ to be the multifunction $\bar{\beta}|_{B_2}$ of 
type $\bart{Y}$. It is easy to check that $\bar{f}_1g_1 = f_2\bar{g}_2$ and so 
Age$(\mc{M})$ has the $\bart{XY}$AP. 
\end{proof}

We now show the existence portion of \autoref{BackForth}. Note that the 
previously described inductive construction of an infinite structure in 
\autoref{xyexist} used even and odd steps to achieve different stages of the 
construction at different times. Because we have two amalgamation properties, as 
well as the JEP to ensure a countable structure exists, we proceed using an 
inductive argument at steps congruent to 0, 1, 2 mod 3 to accommodate different 
stages of the construction.

\begin{proposition}[\autoref{BackForth} (2)]\label{xzexist} Suppose that XZ $\in\mf{B}\cap\mf{I}$. Let 
$\ms{C}$ be a class of finite $\sigma$-structures that is closed under 
substructures and isomorphism, has countably many isomorphism types and has the 
JEP, XYAP and the $\bart{XY}$AP. Then there exists a XZ-homogeneous 
$\sigma$-structure $\mc{M}$ with age $\ms{C}$.
\end{proposition}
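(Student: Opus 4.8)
The plan is to mimic the inductive construction of \autoref{xyexist}, but now to build $\mc{M}$ over three interleaved families of stages indexed by residues mod $3$: one using the JEP to pin down the age, one using the XYAP to force the XYEP, and one using the $\bart{XY}$AP to force the $\bart{XY}$EP. Since XZ $\in \mf{B} \cap \mf{I}$, once $\mc{M}$ has both the XYEP and the $\bart{XY}$EP, \autoref{xzep}(2) immediately yields that $\mc{M}$ is XZ-homogeneous with age $\ms{C}$, so the whole proposition reduces to engineering these two extension properties into the limit structure.

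First I would set up the bookkeeping exactly as in \autoref{xyexist}: fix a countable set $S$ of representatives of the pairs $(A,B)$ with $A \subseteq B \in \ms{C}$, an enumeration of the isomorphism types of $\ms{C}$, and a pairing function $\beta : \mb{N} \times \mb{N} \to \mb{N}$ with $\beta(i,j) \geq i$ to dovetail the scheduling within each residue class. I would then build a chain $M_0 \subseteq M_1 \subseteq \cdots$ of finite structures in $\ms{C}$, with $\mc{M} = \bigcup_k M_k$, where a stage $k \equiv 0 \pmod 3$ uses the JEP to absorb the next isomorphism type, a stage $k \equiv 1 \pmod 3$ handles a scheduled forth-task $(A,B,f)$ with $f : A \to M_i$ of type X by XY-amalgamating $M_k$ with $B$ over $A$ via the XYAP (verbatim as in \autoref{xyexist}), and a stage $k \equiv 2 \pmod 3$ handles a scheduled back-task $(A,B,\bar f)$ with $\bar f : A \to M_i$ a multifunction of type $\bart{X}$. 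For the back step I would invoke the $\bart{XY}$AP with this $A$, with $B_1 = M_i$ and $\bar f_1 = \bar f$, and with $B_2 = B$ and $f_2$ the inclusion $A \hookrightarrow B$, obtaining $D = M_{k+1} \supseteq M_k$ together with an inclusion $g_1 : M_i \hookrightarrow M_{k+1}$ and a multifunction $\bar g : B \to M_{k+1}$ of type $\bart{Y}$ satisfying $\bar f g_1 = f_2 \bar g$; since $g_1$ and $f_2$ are inclusions this just says $\bar g$ extends $\bar f$. Crucially, at every finite stage $M_i$ is finite, so each such $\bar f$ automatically has finite image and only finitely many back-tasks are available, which is what makes the dovetailing terminate. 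With this construction the age computation and the XYEP verification are then identical to \autoref{xyexist}, run respectively along the $0$ and $1 \bmod 3$ stages, with closure under substructures again giving Age$(\mc{M}) = \ms{C}$.

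The genuinely new ingredient, and where I expect the main difficulty, is verifying the $\bart{XY}$EP for the completed structure. Given $A \subseteq B \in \ms{C}$ and a multifunction $\bar f : A \to \mc{M}$ of type $\bart{X}$ with $A\bar f$ \emph{finite}, the finiteness hypothesis is exactly what makes the argument close: because $A\bar f$ is finite it lies inside some $M_j$, so after matching $(A,B)$ to a representative of $S$ via an isomorphism $\theta : B \to B_{j\ell}$ (as in \autoref{xyexist}), $\bar f$ corresponds to one of the finitely many back-tasks scheduled at input stage $j$. At the $2 \bmod 3$ stage processing that task the $\bart{XY}$AP produces a type-$\bart{Y}$ extension into some $M_{n+1} \subseteq \mc{M}$, and precomposing with $\theta$ gives the required $\bar g : B \to \mc{M}$ extending $\bar f$; this precomposition stays of type $\bart{Y}$ by \autoref{compofo} and its following remarks, since the composite of a type-$\bart{Y}$ antihomomorphism with an isomorphism is again of type $\bart{Y}$. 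The two delicate points to check are that it is image-finiteness, and not mere finiteness of $A$, that localises $\bar f$ inside a finite stage — without it an infinite-image $\bart{H}$-multifunction of the kind exhibited in \autoref{infex} could never be captured by any $M_j$, which is precisely why the finiteness clause is built into the $\bart{XY}$EP — and that all composition types line up, both of which are supplied by the multifunction machinery of \autoref{smultanti}. Having thus established both the XYEP and the $\bart{XY}$EP, \autoref{xzep}(2) completes the proof.
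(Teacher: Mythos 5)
Your proposal follows the paper's proof essentially verbatim: the same mod-3 staging (JEP at stages $0$, the XYAP at stages $1$, the $\bart{XY}$AP at stages $2$), the same dovetailed scheduling of tasks over a countable set $S$ of representative pairs, the same use of image-finiteness to localise a multifunction of type $\bart{X}$ inside a finite stage $M_j$ before matching it to a scheduled back-task, and the same conclusion via \autoref{xzep}. The only quibble is that the back-amalgamation should be performed over the current stage $M_k$ (regarding $\bar{f}:A\to M_i\subseteq M_k$ as a multifunction into $M_k$) rather than over $M_i$, so that the amalgam is guaranteed to contain $M_k$ and the chain $M_0\subseteq M_1\subseteq\cdots$ is preserved --- as your own conclusion $D=M_{k+1}\supseteq M_k$ already presupposes.
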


\begin{proof}
We build $\mc{M}$ over countably many stages, assuming that $M_k$ has been constructed at some stage $k\in\mb{N}$, with $M_0$ being some $A\in\ms{C}$. 
As the number of isomorphism types in $\ms{C}$ is countable, we can choose a countable set $S$ of pairs $(A,B)$ with $A\subseteq B\in\ms{C}$ such that every pair $A'\subseteq B'\in\ms{C}$ is represented by some pair $(A,B)\in S$.  Let $[\mathbf{m}] = \{n\in\mb{N}\; :\; n\equiv m\mod{3}\}$, where $m = 1,2$. Define two bijections $\beta_m:[\mathbf{m}]\times \mb{N}\to [\mathbf{m}]$ such that $\beta_m(i,j) \geq i$ for $m = 1,2$. 

Assume first that $k\equiv 0\mod{3}$. As $\ms{C}$ has countably many isomorphism types, we can enumerate the isomorphism types of $\ms{C}$ by $\{T_0=M_0,T_3,T_6,\ldots\}$. Use the JEP to find a structure $D$ that contains both $M_k$ and a copy of some $A\cong T_k\in\ms{C}$; define $M_{k+1}$ to be this structure $D$. Now suppose that $k\equiv 1\mod{3}$. Let $L_1 = (A_{kj}, B_{kj}, f_{kj})_{j\in\mb{N}}$ be the list of all triples $(A,B,f)$ such that $(A,B)\in S$ and $f:A\to M_k$ is a map of type X. This list is countable as $S$ is and there are finitely many maps of type X from $A$ into $M_k$. Let $k = \beta_1(i,j)$. Then as $\beta_1(i,j)\geq i$, the map $f_{ij}:A_{ij}\to M_i\subseteq M_k$ exists. Therefore, we can use the XYAP to define $M_{k+1}$ such that $M_k\subseteq M_{k+1}$ and the map $f_{ij}:A_{ij}\to M_k$ of type X extends to some map $g_{ij}:B_{ij}\rarr{}M_{k+1}$ of type Y (see \autoref{xyamal} (1)). This ensures that every possible XY-amalgamation occurs. If $k\equiv 2\mod{3}$, let $L_2 = (P_{kj}, Q_{kj}, \bar{f}_{kj})_{j\in\mb{N}}$ be the list of all triples $(P,Q,\bar{f})$ such that $(P,Q)\in S$ and $\bar{f}:P\to M_k$ is a multifunction of type $\bart{X}$ with finite image $P\bar{f}$; again, this list is countable. Let $k = \beta_2(i,j)$; as $\beta_2(i,j)\geq i$ it follows that the multifunction $\bar{f}_{ij}:P_{ij}\to M_i\subseteq M_k$ of type $\bart{X}$ is well-defined. So we can use the XZAP to define $M_{k+1}$ such that $M_k\subseteq M_{k+1}$ and the multifunction $\bar{f}_{ij}:P_{ij}\to M_k$ extends to some multifunction $\bar{g}_{ij}:Q_{ij}\rarr{}M_{k+1}$ (see \autoref{xyamal} (2)). This construction ensures that every possible XZ-amalgamation occurs. 

\begin{figure}[h]
\centering
\begin{tikzpicture}[node distance=2cm,scale=0.9]
\begin{scope}[xshift=-3cm]
\node(D) at (0,1.5) {$M_{k+1}$};
\node(R) at (1.5,0) {$B_{ij}$};
\node(L) at (-1.5,0) {$M_k$};
\node(H) at (0,-1.5) {$A_{ij}$};
\path[->,thick,dotted] (R) edge node[above right] {$g_{ij}$} (D);
\path[right hook->,thick,dotted] (L) edge node[above left] {$\iota_k$} (D);
\path[right hook->] (H) edge node[below right] {$\iota_{ij}$} (R);
\path[->] (H) edge node[below left] {$f_{ij}$} (L);
\node(A) at (0,-2.5) {(1)};
\end{scope}

\begin{scope}[xshift=3cm]
\node(D) at (0,1.5) {$M_{k+1}$};
\node(R) at (1.5,0) {$Q_{ij}$};
\node(L) at (-1.5,0) {$M_k$};
\node(H) at (0,-1.5) {$P_{ij}$};
\path[->,thick,dotted] (R) edge node[above right] {$\bar{g}_{ij}$} (D);
\path[right hook->,thick,dotted] (L) edge node[above left] {$\iota_k$} (D);
\path[right hook->] (H) edge node[below right] {$\iota_{ij}$} (R);
\path[->] (H) edge node[below left] {$\bar{f}_{ij}$} (L);
\node(B) at (0,-2.5) {(2)};
\end{scope}

\end{tikzpicture}
\caption{Amalgamations performed in the proof of \autoref{xzexist}. The $\iota$'s are inclusion mappings.}\label{xyamal}
\end{figure}
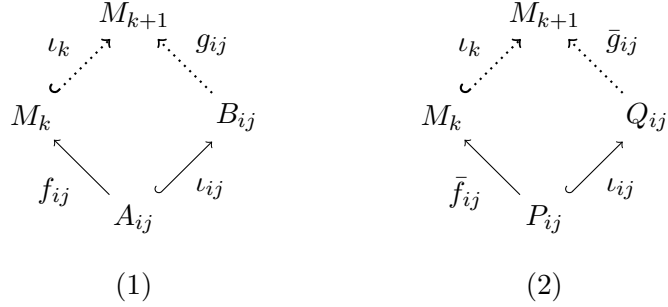

Define $\mc{M} = \bigcup_{k\in\mb{N}}M_k$. Our construction ensures that every isomorphism type of $\ms{C}$ appears at a 0 mod 3 stage, so every structure in $\mc{C}$ embeds into $\mc{M}$. Conversely, we have that $M_k\in\ms{C}$ for all $k\in\mb{N}$. As $\ms{C}$ is closed under substructures, every structure that embeds into $\mc{M}$ is in $\ms{C}$, showing that Age$(\mc{M}) = \ms{C}$. 

It remains to show that $\mc{M}$ is XZ-homogeneous. By \autoref{xzep} and the fact that XZ $\in\mf{B}\cap\mf{I}$, it is enough to show that $\mc{M}$ has both the XYEP and the $\bart{XY}$EP. Assume that $A \subseteq B\in\ms{C}$ and that $f:A\to\mc{M}$ is a map of type X. Using a similar argument to that of \autoref{xyexist} (with $j\in [\mathbf{1}]$ and $(A_{j\ell},B_{j\ell},f_{j\ell})\in L_1$), we can show that as $\ms{C}$ has the XYAP, then $\mc{M}$ has the XYEP.

Now suppose that $P\subseteq Q\in\ms{C}$ and $\bar{f}:P\rarr{}\mc{M}$ is a multifunction of type $\bart{X}$ with finite image $P\bar{f}$. As $P\bar{f}$ is finite, it follows that there exists $u\in [\mathbf{2}]$ such that $P\bar{f}\subseteq M_u$. Furthermore, there exists a triple $(P_{uv},Q_{uv},\bar{f}_{uv})\in L_2$ such that there exists an isomorphism $\eta:Q\to Q_{uv}$ with $P\eta|_P = P_{uv}$ and $\bar{f} = \eta|_P\bar{f}_{uv}$. Define $w = \beta_2(u,v)$; as $w \geq u$, then $P\bar{f} = P_{uv}\bar{f}_{uv}\subseteq M_u \subseteq M_w$. Here, $M_{w+1}$ is constructed by $\bart{XY}$-amalgamating $M_w$ and $Q_{uv}$ over $P_{uv}$; this amalgamation extends the multifunction $\bar{f}_{uv}$ of type $\bart{X}$ to the multifunction $\bar{g}_{uv}:Q_{uv}\to M_{w+1}$ of type $\bart{Y}$. Consequently, the multifunction $\bar{g} = \eta \bar{g}_{uv}:Q\to\mc{M}$ of type $\bart{Y}$ extends the multifunction $\bar{f}$ of type $\bart{X}$ (see \autoref{amaldiag} for a diagram). 
As $\mc{M}$ has both the XYEP and the $\bart{XY}$EP, it follows that $\mc{M}$ is XZ-homogeneous by \autoref{xzep}.
\end{proof}

\begin{center}
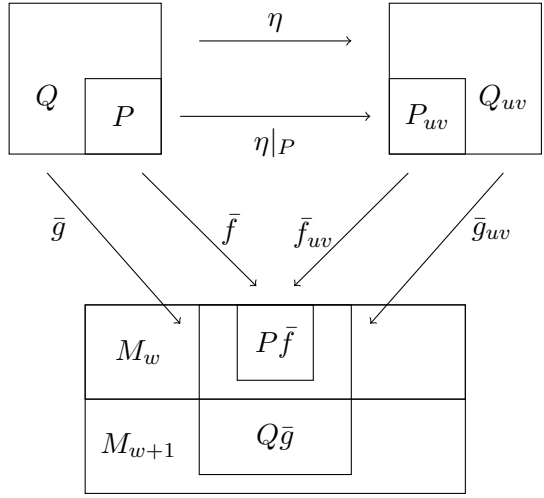
\begin{figure}[h]
\centering
\begin{tikzpicture}[node distance=1.8cm,inner sep=0.7mm,scale=1]

\draw (-3.5,3) rectangle (-1.5,1);
\draw (-2.5,2) rectangle (-1.5,1);
\node (a11) at (-2,1.5) {$P$};
\node (b11) at (-3,1.75) {$Q$};

\draw (3.5,3) rectangle (1.5,1);
\draw (2.5,2) rectangle (1.5,1);
\node (a1) at (2,1.5) {$P_{uv}$};
\node (b) at (3,1.75) {$Q_{uv}$};

\draw[->] (-1.25,1.5) -- (1.25,1.5);
\node (e1) at (0,1.15) {$\eta|_P$};

\draw[->] (-1,2.5) -- (1,2.5);
\node (e) at (0,2.75) {$\eta$};

\draw (-2.5,-1) rectangle (2.5,-2.25);
\draw (-2.5,-1) rectangle (2.5,-3.5);
\node at (-1.8,-1.625) {$M_w$};
\node at (-1.8,-2.875) {$M_{w+1}$};

\draw (-1,-3.25) rectangle (1,-1);
\draw (-0.5,-2) rectangle (0.5,-1);
\node (a) at (0,-1.5) {$P\bar{f}$};
\node (b) at (0,-2.75) {$Q\bar{g}$};
\draw[<-] (0.25,-0.75) -- (1.75,0.75);
\node (alpha) at (0.5,-0.05) {$\bar{f}_{uv}$};
\draw[<-] (1.25,-1.25) -- (3,0.75);
\node (alpha) at (2.85,0) {$\bar{g}_{uv}$};
\draw[<-] (-0.25,-0.75) -- (-1.75,0.75);
\node (f) at (-0.6,0) {$\bar{f}$};
\draw[<-] (-1.25,-1.25) -- (-3,0.75);
\node (h1alpha) at (-2.85,0) {$\bar{g}$};
\draw (-2.5,-2.25) -- (2.5,-2.25);
\end{tikzpicture}
\caption{Diagram of maps in the proof of \autoref{xzexist}. The multifunction $\bar{g} = \eta \bar{g}_{uv}:Q\to\mc{M}$ of type $\bart{Y}$ extends the multifunction $\bar{f}$ of type $\bart{X}$, proving that $\mc{M}$ has the $\bart{XY}$EP.}\label{amaldiag}
\end{figure}
\end{center} 

Finally, we show part (3) of \autoref{BackForth}. Using the fact that 
XZ-homogeneous structures have two extension properties, we can ensure that a 
map between two of them is surjective by using a back-and-forth argument. This 
motivates a new definition, building on that of Y-equivalence.

\begin{definition} Let $\mc{M,N}$ be $\sigma$-structures, and suppose that Z 
$\in\{$E, B, A$\}$ corresponds to the surjective map of type Y $\in\{$H, M, 
I$\}$ via the relation $\ms{S}$. Say that 
$\mc{M}$ and $\mc{N}$ are \emph{Z-equivalent} if Age$(\mc{M})$ = Age$(\mc{N})$ 
and every embedding $f:A\rarr{}\mc{N}$ from a finite substructure $A$ of 
$\mc{M}$ into $\mc{N}$ extends to a surjective map $g:\mc{M}\rarr{}\mc{N}$ of 
type Y, and vice versa.
\end{definition}

For an example, $\mc{M,N}$ are B-equivalent means that they are bi-equivalent in 
the sense of \cite{coleman2018permutation}. Note that if two structures $\mc{M}$ and 
$\mc{N}$ are Z-equivalent, then they are also Y-equivalent where (Z,Y) 
$\in\ms{S}$ (from \autoref{srel}).

\begin{proposition}[\autoref{BackForth} (3)]\label{xzunique} Suppose that XZ $\in\mf{B}\cap\mf{I}$.
\begin{enumerate}[(1)]
\item Assume that $\mc{M,N}$ are Z-equivalent. Then $\mc{M}$ is XZ-homogeneous 
if and only if $\mc{N}$ is. 
\item If $\mc{M,N}$ are XZ-homogeneous and Age$(\mc{M})$ = Age$(\mc{N})$, then 
$\mc{M}$ and $\mc{N}$ are Z-equivalent.
\end{enumerate}
\end{proposition}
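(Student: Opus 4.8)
The plan is to deduce both parts from the extension-property characterisation of XZ-homogeneity in \autoref{xzep}, mirroring the way \autoref{xyunique} was deduced from \autoref{xyep}, but now carrying the antihomomorphism `back' condition through as well. For part (1), since Z-equivalence is symmetric in $\mc{M}$ and $\mc{N}$, it suffices to assume $\mc{M}$ is XZ-homogeneous and show $\mc{N}$ is; and because XZ $\in\mf{B}\cap\mf{I}$, \autoref{xzep} (2) reduces this to proving that $\mc{N}$ has both the XYEP and the $\bart{XY}$EP. The XYEP comes for free: a Z-equivalence is in particular a Y-equivalence, and $\mc{M}$ being XZ-homogeneous is in particular XY-homogeneous, so \autoref{xyunique} (1) yields that $\mc{N}$ is XY-homogeneous and hence has the XYEP by \autoref{xyep} (1).

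The genuinely new content is the $\bart{XY}$EP for $\mc{N}$. Given $A\subseteq B\in\ms{C}$ and a multifunction $\bar{f}:A\rarr{}\mc{N}$ of type $\bart{X}$ with finite image $A' = A\bar{f}\subseteq\mc{N}$, I would use Age$(\mc{M})=$ Age$(\mc{N})$ to fix a copy $A''\subseteq\mc{M}$ of $A'$ with an isomorphism $e:A'\to A''$. Since $e$ is an embedding of a finite substructure of $\mc{N}$ into $\mc{M}$, the \emph{vice versa} clause of Z-equivalence extends it to a surjective map $\gamma:\mc{N}\to\mc{M}$ of type Y, whose converse $\bar{\gamma}:\mc{M}\to\mc{N}$ is a surjective multifunction of type $\bart{Y}$ extending $e^{-1}$. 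Composing $\bar{f}$ with the injective $e$ gives a multifunction $\bar{f}e:A\rarr{}\mc{M}$ of type $\bart{X}$ with finite image $A''$; applying the $\bart{XY}$EP of $\mc{M}$ (which $\mc{M}$ enjoys by \autoref{xzep} (1)) extends it to $\bar{h}:B\rarr{}\mc{M}$ of type $\bart{Y}$. Then $\bar{g} = \bar{h}\bar{\gamma}:B\rarr{}\mc{N}$ is of type $\bart{Y}$ by the composition table in \autoref{maptype}, and one checks it extends $\bar{f}$: for $a\in A$ we have $a\bar{h}\bar{\gamma} = (a\bar{f}e)\bar{\gamma} = a\bar{f}$, since $\bar{\gamma}$ restricts to $e^{-1}$ on $A''$. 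Thus $\mc{N}$ has the $\bart{XY}$EP, and with the XYEP above, \autoref{xzep} (2) shows $\mc{N}$ is XZ-homogeneous.

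For part (2), with $\mc{M},\mc{N}$ both XZ-homogeneous and sharing an age, I would prove Z-equivalence by a back-and-forth construction modelled on \autoref{xzep} (2), building a surjective type-Y map $\mc{M}\to\mc{N}$ extending a given embedding $f:A\to\mc{N}$ (and symmetrically in the other direction). After enumerating $\mc{M}$ and $\mc{N}$ and taking $f_0$ to be $f$ with its codomain restricted to its image, at even stages I would use the XYEP of $\mc{N}$ to pull a new point of $\mc{M}$ into the domain (the forth step of \autoref{xyunique} (2)), and at odd stages I would pass to the converse multifunction $\bar{f}_k$ of type $\bart{X}$ and use the $\bart{XY}$EP of $\mc{M}$ to pull a new point of $\mc{N}$ into the image (the back step of \autoref{xzep} (2)). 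Ensuring every point of $\mc{M}$ is absorbed at an even stage and every point of $\mc{N}$ at an odd stage, the increasing union is a total surjective map of type Y, i.e.\ of type Z.

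The main obstacle is bookkeeping in the `back' direction rather than any deep difficulty. In part (1) the delicate point is that a multifunction cannot simply be composed with the surjective map $\gamma$; one must transport $\bar{f}$ into $\mc{M}$ through the isomorphism $e$ and return through the converse $\bar{\gamma}$, verifying via \autoref{maptype} that every composite stays of the correct type $\bart{X}$ or $\bart{Y}$ (here XZ $\in\mf{I}$ gives $\bart{Y}\subseteq\bart{X}$, so the types behave). In part (2), as in \autoref{xzep} (2), the subtlety is keeping each $A_k$ and $B_k$ finite at the odd steps: when $\bart{Y} = \bart{H}$ (the HE case) the extension furnished by the $\bart{XY}$EP may have infinite image, so one restricts the preimage of the newly added point to a finite nonempty set, which preserves the antihomomorphism property and hence the type of the converse map.
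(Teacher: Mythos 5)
Your proposal follows essentially the same route as the paper's proof: part (1) reduces the claim to the XYEP (obtained via Y-equivalence and \autoref{xyunique}) together with the $\bart{XY}$EP, which you establish by transporting $\bar{f}$ through an isomorphism $e$ into $\mc{M}$, applying the $\bart{XY}$EP there, and returning through the converse of the surjective type-Y map supplied by Z-equivalence --- exactly the paper's argument --- while part (2) is the same back-and-forth using the XYEP of $\mc{N}$ at even stages and the $\bart{XY}$EP of $\mc{M}$ at odd stages. Your additional care in trimming the preimage of the newly absorbed point to a finite nonempty set at the back steps (to keep the $A_k$ finite in the $\bart{H}$ case) is a sensible refinement of a point the paper glosses over, but it does not change the approach.
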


\begin{proof}
(1) As $\mc{M,N}$ are Z-equivalent they are also Y-equivalent; as $\mc{M}$ is 
also XY-homogeneous, so is $\mc{N}$ by \autoref{xyunique}. By \autoref{xyep}, it follows that $\mc{N}$ has the 
XYEP. We show now that $\mc{N}$ has the $\bart{XY}$EP. Suppose that $A\subseteq B\in$ Age$(\mc{N})$ and there exists a multifunction 
$\bar{f}:A\rarr{}A'\subseteq\mc{N}$ of type $\bart{X}$ with $A'$ finite. Note that $A$ need not 
be isomorphic to $A'$. As Age$(\mc{M})=$ Age$(\mc{N})$ there exists a copy $A''$ 
of $A'$ in $\mc{M}$; fix an isomorphism $e:A'\to A''$ between the two. 
Therefore, $e$ is a isomorphism from a finite structure of $\mc{N}$ into 
$\mc{M}$; as the two are Z-equivalent, we extend this to a surjective map 
$\alpha:\mc{N}\to\mc{M}$ of type Y. This in turn induces a surjective map 
$\bar{\alpha}:\mc{M}\rarr{}\mc{N}$ by \autoref{antihomo}. Note that 
$\bar{\alpha}$ extends the isomorphism $e^{-1}:A''\to A'$. Now, define $\bar{h} = \bar{f}e: A\to A''$; this is a 
multifunction of type $\bart{X}$ from $A$ into $\mc{M}$ with $A\bar{h} = A''$ finite. Since $\mc{M}$ is 
XZ-homogeneous, it has the $\bart{XY}$EP by \autoref{xzep} and so we extend 
$\bar{h}$ to a multifunction $\bar{h}':B\to \mc{M}$ of type $\bart{Y}$. Here, 
the multifunction $\bar{h}'\bar{\alpha}:B\rarr{}\mc{N}$ is also of type 
$\bart{Y}$; we need to show it extends $\bar{f}$. As $\bar{h}'$ extends $\bar{h} 
= \bar{f}e$, it follows that: \[a\bar{f} = a\bar{f}ee^{-1} = 
a\bar{f}e\bar{\alpha} = a\bar{h}'\bar{\alpha}\]  for all $a\in A$. Therefore 
$\mc{N}$ has the XZEP.

(2) It is enough to show that $\mc{N}$ has the XYEP and the $\bart{XY}$EP by 
\autoref{xzep}. We utilise a back-and-forth argument constructing the surjective 
map over infinitely many stages. Let $f:A\rarr{}B$ be a bijective embedding from 
a finite structure $A\subseteq \mc{M}$ to a finite substructure 
$B\subseteq\mc{N}$. Set $A=A_0$, $B=B_0$ and $f = f_0$ and assume that 
$f_k:A_k\rarr{}B_k$ is a surjective map of type Y (and so of type X by 
assumption) extending $f_k$. Note that as both $\mc{M}$ and $\mc{N}$ are 
countable, then there exists enumerations $\mc{M} = \{m_0,m_1,\ldots\}$ and $\mc{N} 
= \{n_0,n_1,\ldots\}$.

If $k$ is even, select a $m_i\in\mc{M}\smallsetminus A_k$, where $i$ is the 
smallest natural number such that $m_i\notin A_k$. So 
$A_k\cup\{m_i\}\subseteq\mc{M}$, and is also in Age$(\mc{N})$ by assumption. As 
$\mc{N}$ is XZ-homogeneous it has the XYEP by \autoref{xyep} and we use this to 
extend $f_k$ to a map $f_{k+1}':A_k\cup\{m_i\}\rarr{}\mc{N}$ of type Y. 
Restricting the codomain of $f_{k+1}'$ to its image yields a surjective map 
$f_{k+1}:A_k\cup\{m_i\}\rarr{}B_k\cup\{m_if_{k+1}\}$ of type Y. If $k$ is odd, 
select a $n_i\in\mc{N}\smallsetminus B_k$ such that $i$ is the smallest natural 
number such that $n_i\notin B_k$. Hence $B_k\cup\{n_i\}\subseteq \mc{N}$ and 
thus it is an element of Age$(\mc{M})$ by assumption. As $f_k$ is a surjective 
map of type Y, its converse $\bar{f}_k:B_k\rarr{}A_k$ is a surjective 
multifunction of type $\bart{Y}$ by \autoref{antihomo}, and of type $\bart{X}$ 
by assumption. As $\mc{M}$ is XZ-homogeneous it has the $\bart{XY}$EP and so we 
can extend $\bar{f}_k$ to a multifunction 
$\bar{f}_{k+1}':B_k\cup\{n_i\}\rarr{}\mc{M}$ of type $\bart{Y}$. By restricting 
the codomain of $\bar{f}_{k+1}'$ to its image, we obtain a surjective 
multifunction $\bar{f}_{k+1}:B_k\cup\{n_i\}\rarr{}A_k\cup n_i\bar{f}_{k+1}'$ of 
type $\bart{Y}$, where $n_i\bar{f}_{k+1}' = \{(n_i,y)$ : $y\in\mc{M}\}$ is a 
non-empty set. So by \autoref{antihomo}, there exists a surjective map 
$f_{k+1}:A_k\cup n_i\bar{f}_{k+1}'\to B_k\cup\{n_i\}$ of type Y extending $f_k$. 
By our earlier assumption, as a map of type Y is also a map of type X, we can 
repeat this process infinitely many times. By ensuring all points in $\mc{M}$ 
appear at even stages and all points in $\mc{N}$ appear at odd stages, we 
construct a surjective map $\alpha:\mc{M}\rarr{}\mc{N}$ of type Y as required. 
We can use a similar method to show that we can extend any embedding 
$g:A\rarr{}B$, where $A\in\mc{N}$ and $B\in\mc{M}$, to a surjective map of type Y; 
proving that $\mc{M}$ and $\mc{N}$ are Z-equivalent.
\end{proof}

Of course, the open problem that arises from Sections \ref{sforth} and \ref{sbackforth} is:

\begin{question} Can we expand \autoref{Forth} and \autoref{BackForth} to 
include those homomorphism-homogeneity classes in $\mf{B}$?
\end{question}

\section{Maximal homomorphism-homogeneity classes}\label{smhhc}

This section is devoted to determining the extent to which well known examples 
of homogeneous structures are also homomorphism-homogeneous. In some cases, 
verifying that a structure $\mc{M}$ is homogeneous involves using a property of 
$\mc{M}$ to determine that $\mc{M}$ has the EP, and so is homogeneous. Good examples of such properties are the density of 
$(\mb{Q},<)$, and \emph{Alice's restaurant property} characteristic of $R$ (see \autoref{mhhrangra}). In the homomorphism-homogeneity case, this idea was used 
by Cameron and Lockett \cite{cameron2010posets} and Lockett and Truss \cite{lockett2014some} to classify homomorphism-homogeneous posets and determine their position relative to the natural containment order on $\mb{H}$ (see \autoref{hhdiag}). In 
addition to this, Dolinka \cite{dolinka2014bergman} used properties of known 
homogeneous structures to show that they satisfied the \emph{one-point 
homomorphism extension property} (1PHEP), a necessary and sufficient condition 
for HH-homogeneity. Our approach in this section is similar to that of Section 3 
of \cite{dolinka2014bergman}; by defining necessary and sufficient conditions 
for XY and XZ-homogeneity and using properties of structures to show that these are 
satisfied or not satisfied. As in \autoref{sbackforth}, we let X,Y $\in\{$H, M, 
I$\}$, $\bart{X},\bart{Y}\in\{\bart{H},\bart{M},\bart{I}\}$, and Z $\in\{$E, B, 
A$\}$ throughout this section. Furthermore, the pair (Z,Y)$\in\ms{S}$ is related 
as in \autoref{srel} on page \pageref{srel}.

So to begin this section, we define the \emph{one-point XY-extension property}, 
and the \emph{one-point $\bart{XY}$-extension property}:

\begin{quotation}(1PXYEP) We say that a $\sigma$-structure $\mc{M}$ with age 
$\ms{C}$ has the 1PXYEP if for all $A\subseteq B\in\ms{C}$ with 
$|B\smallsetminus A| = 1$ and maps $f:A\rarr{}\mc{M}$ of type X, there exists a 
map $g:B\rarr{}\mc{M}$ of type Y extending $f$.
\end{quotation}

\begin{quotation}(1P$\bart{XY}$EP) Suppose that $\mc{M}$ is a $\sigma$-structure with age $\ms{C}$. Say that $\mc{M}$ has the 1P$\bart{XY}$EP if for all $A\subseteq B\in\ms{C}$ with $|B\smallsetminus A| = 1$, and a multifunction $\bar{f}:A\rarr{}\mc{M}$ of type $\bart{X}$ with $A\bar{f}$ finite, there exists a multifunction 
$\bar{g}:B\rarr{}\mc{M}$ of type $\bart{Y}$ extending $\bar{f}$.
\end{quotation}

For an example, the 1PHHEP is the same thing as the 1PHEP of 
\cite{dolinka2014bergman}. These properties, together with the next proposition, 
provide some of the theoretical basis for the examples that follow.

\begin{proposition}\label{onepmbep} Suppose that XY $\in\mf{I}$. A countable 
$\sigma$-structure $\mc{M}$ has the XYEP / $\bart{XY}$EP if and only if it has 
the 1PXYEP / 1P$\bart{XY}$EP.
\end{proposition}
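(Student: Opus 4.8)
The two forward implications are immediate: any pair $A\subseteq B\in\ms{C}$ with $|B\smallsetminus A| = 1$ is in particular a pair $A\subseteq B\in\ms{C}$, so a structure with the XYEP (respectively the $\bart{XY}$EP) certainly has the 1PXYEP (respectively the 1P$\bart{XY}$EP). The content lies in the two converse implications, and in both cases the plan is to build up the extension by adjoining the points of $B\smallsetminus A$ one at a time, using the hypothesis XY $\in\mf{I}$ at each stage to guarantee that the map just produced is still of the correct type to be fed back into the one-point property.

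For the XY-direction I would avoid a bare induction and instead route through homogeneity. Since Y $\in\{$H, M, I$\}$ we automatically have XY $\in\mf{F}$, so the hypothesis gives XY $\in\mf{F}\cap\mf{I}$. The forth argument in the proof of \autoref{xyep} (2) extends a finite partial map of type X to a total map of type Y by adjoining a single point $m_i$ at each stage, and every appeal to the XYEP there is in fact an appeal to a one-point extension. Re-running that argument verbatim with the 1PXYEP in place of the XYEP (using XY $\in\mf{I}$ to see that each intermediate map of type Y is again of type X, so that the next one-point step applies) shows that $\mc{M}$ is XY-homogeneous; \autoref{xyep} (1) then returns the full XYEP. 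This route also disposes of the degenerate case $A = B$, where homogeneity forces every finite partial map of type X to already be of type Y.

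For the $\bart{XY}$-direction the same shortcut is unavailable, because the 1P$\bart{XY}$EP alone does not deliver the XYEP needed for XZ-homogeneity; here I would argue directly by induction on $n = |B\smallsetminus A|$. The base case $n=1$ is exactly the 1P$\bart{XY}$EP. For the inductive step, choose $b\in B\smallsetminus A$, set $A' = B\smallsetminus\{b\}$, and use the induction hypothesis to extend the given multifunction $\bar{f}:A\rarr{}\mc{M}$ of type $\bart{X}$ (with $A\bar{f}$ finite) to a multifunction $\bar{g}':A'\rarr{}\mc{M}$ of type $\bart{Y}$. Since XY $\in\mf{I}$ says every map of type Y is of type X, the barring of \autoref{maptype} gives the inclusion $\bart{Y}\subseteq\bart{X}$ (recall $\bart{I}\subseteq\bart{M}\subseteq\bart{H}$), so $\bar{g}'$ is also of type $\bart{X}$; a single application of the 1P$\bart{XY}$EP to the pair $A'\subseteq B$ then extends $\bar{g}'$ to the required $\bar{g}:B\rarr{}\mc{M}$ of type $\bart{Y}$ extending $\bar{f}$.

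The step I expect to be the main obstacle is maintaining the finiteness hypothesis $A\bar{f}$ finite across the induction, since the 1P$\bart{XY}$EP cannot even be invoked without it. When $\bart{Y}\in\{\bart{M},\bart{I}\}$ the extending multifunction is a (partial) function, so each new point contributes a single image point and finiteness is automatic. The problematic case is $\bart{Y} = \bart{H}$, where a newly adjoined point $b$ may acquire an infinite image $b\bar{g}'$ (see \autoref{infex}). The remedy is to observe that shrinking $b\bar{g}'$ to any single chosen element leaves a multifunction that is still an antihomomorphism — passing to a smaller image only removes non-relation constraints that must be checked in \autoref{defantihomo}, while keeping the image nonempty preserves totality — so at each stage one may replace $\bar{g}'$ by a restriction with finite image before continuing. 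With this trimming in place the induction closes and the converse implication follows.
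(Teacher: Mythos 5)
Your proof is correct, but it diverges from the paper's in instructive ways. The paper proves the unbarred converse by exactly the ``bare induction'' on $|B\smallsetminus A|$ that you chose to avoid: given $P\subseteq Q$ with $|Q\smallsetminus P|=k+1$, it interposes $S$ with $P\subseteq S\subseteq Q$ and $|Q\smallsetminus S|=1$, applies the inductive hypothesis to reach $S$, uses XY $\in\mf{I}$ to see that the resulting type-Y map is again of type X, and finishes with one application of the 1PXYEP. Your detour through XY-homogeneity (re-running the forth argument of \autoref{xyep} (2) with the 1PXYEP in place of the XYEP and then invoking \autoref{xyep} (1)) is also valid, since every appeal to the XYEP in that argument really is a one-point appeal; it is longer, but as you note it silently absorbs the degenerate case $A=B$. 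For the barred converse the paper says only that ``a similar argument'' works, and your induction is that argument; more importantly, you identify and repair a point the paper leaves implicit: the 1P$\bart{XY}$EP can only be invoked on a multifunction with finite image, and when $\bart{Y}=\bart{H}$ an intermediate extension may assign an infinite image to the newly adjoined point, so the inductive hypothesis must be strengthened to produce extensions with finite image. Your fix --- trimming the image of each new point to a nonempty finite (indeed singleton) subset, which preserves totality, the partial-multifunction condition, and the universally quantified non-relation-preservation clause of \autoref{defantihomo}, while leaving the values on $A$ untouched so the extension property is retained --- is exactly what is needed to close the induction, and is a genuine improvement on the paper's exposition.
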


\begin{proof} The forward direction for both the XYEP and $\bart{XY}$EP cases is 
clear. We now aim to show that if $\mc{M}$ has the 1PXYEP then $\mc{M}$ has the 
XYEP. Assume that $A\subseteq B$ with $|B\smallsetminus A| = n$, and $f:A\to 
\mc{M}$ is a map of type X. We prove the result by induction on the size of this 
complement; the base case (where $n$ = 1) is true by the assumption that 
$\mc{M}$ has the 1PXYEP.

So suppose that for some $k\in\mb{N}$, for any $A\subseteq B\in\ms{C}$ where 
$|B\smallsetminus A| = k$ and any map $f:A\to\mc{M}$ of type X can be extended 
to a map $g:B\to\mc{M}$ of type Y. Take $P\subseteq Q\in\ms{C}$ where 
$|Q\smallsetminus P| = k +1$ and $f':P\to\mc{M}$ to be some map of type X. Now, there 
exists $S\in \ms{C}$ containing $P$ such that $|Q\smallsetminus S| = 1$. By the 
inductive hypothesis, we can extend $f'$ to a map $h:S\to\mc{M}$ of type Y. As 
XY $\in\mf{I}$, it follows that $h$ is also a map of type X. Now, using the 
1PXYEP, extend $h$ to a map $g':Q\to\mc{M}$ of type Y. Since $P\subseteq 
S\subseteq Q$ and $g'$ extends $h$ which extends $f'$, we have that $g'$ extends 
$f'$ and so we are done. Using a similar argument, we can show that if $\mc{M}$ 
has the 1P$\bart{XY}$EP then it has the $\bart{XY}$EP.
\end{proof}

\begin{remark} Let XY~$\in\mf{I}$. Together with \autoref{xyep}, this result 
states that a countable structure $\mc{M}$ has the 1PXYEP if and only if 
$\mc{M}$ is XY-homogeneous. Similarly, by \autoref{xzep} a countable structure 
$\mc{M}$ is XZ-homogeneous if and only if it has the 1PXYEP and the 
1P$\bart{XY}$EP, where (Z,Y) are as in $\ms{S}$ (\autoref{srel} on page \pageref{srel}).
\end{remark}

By considering properties of partial maps and endomorphisms of structures, our 
next result places restrictions on certain types of homomorphism-homogeneity. 
We look at structures known as \emph{cores}; a structure 
$\mc{M}$ is a core if every endomorphism of $\mc{M}$ is an embedding \cite{bodirsky2005core}. Widely studied examples of cores include the 
countable dense linear order without endpoints $(\mb{Q},<)$, the complete graph 
on countably many vertices $K^{\aleph_0}$, the $K^n$-free homogeneous graphs for $n\geq 3$ \cite{mudrinski2010notes} and the Henson digraphs $M_T$ \cite{coleman2018permutation}.
This straightforward result includes a restatement of Lemma 1.1 
of \cite{lockett2014some}.

\begin{lemma}\label{123lem} Let $\mc{M}$ be a countable $\sigma$-structure. 
\begin{enumerate}[(1)]
 \item $\mc{M}$ is MI and MA-homogeneous 
(HI and HA-homogeneous) if and only if $\mc{M}$ is IA-homogeneous and every 
finite partial monomorphism (homomorphism) of $\mc{M}$ is an isomorphism.
\item If $\mc{M}$ is HM or HB-homogeneous, then every finite partial 
homomorphism of $\mc{M}$ is also a monomorphism.
\item Let $\mc{M}$ be a core. If there exists a finite partial monomorphism of 
$\mc{M}$ that is not an isomorphism, then $\mc{M}$ is not MH-homogeneous.
\end{enumerate}
\end{lemma}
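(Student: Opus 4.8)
The plan is to lean on one recurring principle throughout all three parts: when a finite partial map $f:A\to B$ of $\mc{M}$ extends to a global map $\alpha$ of $\mc{M}$ of some restricted type, the restriction $f = \alpha|_A$ inherits the corresponding property on the finite substructure $A$. Specifically, the restriction of an automorphism to a finite substructure is an isomorphism onto its image; the restriction of an embedding is an embedding onto its image (hence an isomorphism, since our finite partial maps are taken surjective); and the restriction of any injective map is injective. Each part of the lemma is then a short deduction from this observation.

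For part (1) I would treat the monomorphism case in full, the homomorphism case being identical word for word with ``monomorphism'' replaced by ``homomorphism''. Forwards, assume $\mc{M}$ is MA-homogeneous. Since every finite partial isomorphism is in particular a finite partial monomorphism, MA-homogeneity extends each such isomorphism to an automorphism, giving IA-homogeneity. Moreover, if $f$ is any finite partial monomorphism, extending it to an automorphism $\alpha$ shows $f = \alpha|_A$ is an isomorphism onto $Af$; hence every finite partial monomorphism is an isomorphism. Conversely, assume $\mc{M}$ is IA-homogeneous and every finite partial monomorphism is an isomorphism. Any finite partial monomorphism $f$ is then an isomorphism, so IA-homogeneity extends it to an automorphism $\alpha$; as $\alpha$ is simultaneously an embedding and an automorphism, $f$ extends to a map of each type, yielding both MI- and MA-homogeneity (which coincide by the remark following \autoref{xzep}). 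In the homomorphism variant, the one point to watch is that an automorphism is injective, so its restriction is injective; this is what forces a finite partial homomorphism extending to an automorphism to be a genuine isomorphism, and not merely relation-reflecting.

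Part (2) is immediate from the principle: if $\mc{M}$ is HM- or HB-homogeneous, a finite partial homomorphism $f$ extends to a monomorphism or a bimorphism $\alpha$, each of which is injective, so $f = \alpha|_A$ is an injective homomorphism, i.e.\ a monomorphism.

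For part (3), let $\mc{M}$ be a core and suppose towards a contradiction that $\mc{M}$ is MH-homogeneous while some finite partial monomorphism $f:A\to B$ is not an isomorphism. By MH-homogeneity, $f$ extends to an endomorphism $\alpha$, and since $\mc{M}$ is a core, $\alpha$ is an embedding. Then $f = \alpha|_A$ is an embedding onto its image $B$, and being surjective it is an isomorphism, contradicting the choice of $f$. There is no genuine obstacle in any of these arguments; the only subtlety worth flagging is the surjectivity convention on finite partial maps, which is precisely what upgrades ``restriction of an embedding'' to ``isomorphism'' in parts (1) and (3).
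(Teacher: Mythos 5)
Your proposal is correct and uses essentially the same argument as the paper: the restriction of a global map of a given type (automorphism, embedding, injective map) to a finite substructure inherits that type, so a finite partial map lacking the corresponding property cannot extend. The paper merely states this more tersely, deferring part (1) to Lemma 1.1 of Lockett and Truss, whereas you spell out both directions explicitly; the content is the same.
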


\begin{proof} (1) is contained in Lemma 1.1 of \cite{lockett2014some}; notice 
that we cannot extend a map that is not a partial isomorphism of $\mc{M}$ to an 
isomorphism of the entire structure $\mc{M}$. The converse direction is clear. 
To show (2), note that if $h$ is a finite partial homomorphism of $\mc{M}$ that 
is not injective, then we cannot possibly extend this to an injective map and so 
$\mc{M}$ does not have the HMEP. For (3), let $h$ be a finite partial 
monomorphism of a core $\mc{M}$ that is not an isomorphism. As any endomorphism 
of $\mc{M}$ is an embedding, we cannot extend $h$.
\end{proof}

\begin{remark} Note that (1) and (2) also follow from \autoref{Forth} and 
\autoref{BackForth}.
\end{remark}

Following the approach of \cite{lockett2014some} in classifying 
homomorphism-homogeneous posets, the idea of this section is to look at 
properties of graphs and digraphs to determine ``maximal" homomorphism-homogeneity 
classes with respect to the containment order on $\mf{H}$. We formally define 
what we mean by ``maximal".

\begin{definition}\label{defmhh} Let $\mc{M}$ be a first-order structure. A 
homomorphism-homogeneity class XY~$\in\mb{H}$ is \emph{maximal} for $\mc{M}$ if 
$\mc{M}$ is XY-homogeneous and $\mc{M}$ is not PQ-homogenenous, where 
PQ~$\subseteq$~XY in $\mb{H}$. If this happens, we say that XY is a 
\emph{maximal homomorphism-homogeneity class} (shortened to \emph{mhh-class}) for $\mc{M}$.
\end{definition}

\begin{remark} While this definition describes a \emph{minimal} element in the 
poset $\mb{H}$, it is so named because of the strengths of different notions of 
homomorphism-homogeneity. For instance, HA-homogeneity is a stronger condition 
than IA-homogeneity, but HA $\subseteq$ IA in $\mb{H}$. This reflects the 
inverse correspondence between the relative strength of notions of 
homomorphism-homogeneity in $\mf{H}$ and containment of classes in $\mb{H}$ (see the discussion on page \pageref{invcor}).
\end{remark}

For example, if $\mc{M}$ is MB-homogeneous but not MA or HB-homogeneous, then MB 
is a mhh-class for $\mc{M}$. A structure $\mc{M}$ may have more than one 
mhh-class. The set of mhh-classes for $\mc{M}$ 
completely determines the extent of homomorphism-homogeneity satisfied by 
$\mc{M}$; we therefore denote this set by $\mb{H}(\mc{M})$. As an example 
$\mb{H}((\mb{Q},<)) = \{$HA$\}$; this example arose from the classification of 
homomorphism-homogeneous posets in \cite{lockett2014some}.

If $\mc{M}$ is a countable $\sigma$-structure where there exists a finite 
partial monomorphism of $\mc{M}$ that is not an isomorphism, and a finite 
partial homomorphism of $\mc{M}$ that is not an monomorphism, then 
\autoref{123lem} implies that the ``best possible" mhh-classes for 
$\mc{M}$ are IA, MB and HE. As an aside, these classes have important roles to 
play in the theory of generic endomorphisms \cite{lockettgeneric}.

Before we investigate some examples in the context of this article, we note the following direct consequence of \autoref{123lem} (3) with respect to \autoref{defmhh}.

\begin{corollary}\label{mhhcores} Let $\mc{M}$ be a countable homogeneous core. If there exists a finite partial monomorphism of 
$\mc{M}$ that is not an isomorphism, then $\mb{H}(\mc{M}) = \{$IA$\}$.\dne
\end{corollary}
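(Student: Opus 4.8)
The plan is to argue entirely at the level of the poset $\mb{H}$, using \autoref{123lem}~(3) as the single structural input. The starting observation is that the collection of classes to which $\mc{M}$ belongs is \emph{upward closed} in $\mb{H}$: if $\mc{M}$ is XY-homogeneous and XY~$\subseteq$~PQ in $\mb{H}$, then every XY-homogeneous structure is PQ-homogeneous, so $\mc{M}$ is PQ-homogeneous. By \autoref{defmhh}, $\mb{H}(\mc{M})$ is precisely the set of minimal elements of this up-set. Hence to prove $\mb{H}(\mc{M}) = \{\text{IA}\}$ it suffices to show that this up-set is the principal filter generated by IA, i.e.\ that $\mc{M}$ is PQ-homogeneous if and only if IA~$\subseteq$~PQ; its unique minimal element is then IA.

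First I would record that $\mc{M}$ is IA-homogeneous, since it is homogeneous. Reading off \autoref{hhdiag} (and using that an automorphism is simultaneously an endomorphism, epimorphism, monomorphism, bimorphism and embedding), the classes lying above IA are exactly the six ``$I$''-classes IA $=$ II, IB, IE, IM and IH; so IA~$\subseteq$~PQ forces $\mc{M}$ to be PQ-homogeneous by the upward-closure above. For the converse I would invoke \autoref{123lem}~(3): as $\mc{M}$ is a core admitting a finite partial monomorphism that is not an isomorphism, $\mc{M}$ is \emph{not} MH-homogeneous.

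It then remains to convert this single exclusion into the exclusion of every class not above IA. The key bookkeeping step, read from \autoref{hhdiag}, is that MH is the greatest element among all classes other than the six $I$-classes: every one of the remaining ten distinct classes (HA $=$ HI, HB, HM, HH, HE, MA $=$ MI, MB, MM, ME) is contained in MH. Consequently, if $\mc{M}$ were PQ-homogeneous for any PQ with IA~$\not\subseteq$~PQ, then PQ~$\subseteq$~MH would give, by upward closure, that $\mc{M}$ is MH-homogeneous, contradicting the previous paragraph. Therefore the up-set of classes containing $\mc{M}$ meets no class outside the principal filter of IA, and since it does contain IA it equals that filter exactly; its unique minimal element is IA, giving $\mb{H}(\mc{M}) = \{\text{IA}\}$. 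I expect the only non-mechanical point to be the poset verification that MH dominates all non-$I$ classes in $\mb{H}$; once that is confirmed, the corollary follows immediately from \autoref{123lem}~(3).
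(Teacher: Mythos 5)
Your argument is correct and is precisely the reasoning the paper leaves implicit: the corollary is stated there without proof as a ``direct consequence'' of \autoref{123lem}~(3) together with \autoref{defmhh}, and your elaboration --- rule out MH, then observe that every class not in the up-set of IA lies below MH (which holds simply because for P~$\in\{$M,\,H$\}$ a finite partial monomorphism is a map of type P and any map of type Q is an endomorphism, so PQ~$\subseteq$~MH) --- is exactly the intended route. The poset bookkeeping you flag as the only non-mechanical point checks out against \autoref{hhdiag}, so the proof is complete.
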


\begin{remark} It has been shown that every $K_n$-free graph (for $n \geq 3$) \cite{mudrinski2010notes}, every Henson digraph $M_T$ (for any set of tournaments on more than $3$ vertices) and the myopic local order $S(3)$ \cite{coleman2018permutation} are cores. By \autoref{mhhcores}, it follows that the mhh-class for each of these structures is IA.
\end{remark}

In the rest of this section, we look at a selection of countable homogeneous graphs and digraphs encountered throughout the literature in order to determine sets of mhh-classes for these structures. By restricting ourselves to classes XY 
$\in\mf{I}$, we can recall \autoref{onepmbep} and the remark that follows it; to 
show that $\mc{M}$ is XY-homogeneous it suffices to show that $\mc{M}$ has the 
1PXYEP, and to show that $\mc{M}$ is XZ-homogeneous it suffices to show that it 
has the 1PXYEP and the 1P$\bart{XY}$EP.

\subsection{Graphs}\label{ssgraphs}

In this article, a \emph{graph} $\G$ is a set of \emph{vertices} $V\G$ together with a set of \emph{edges} $E\G$, where this edge set interprets a irreflexive and symmetric binary relation $E$. For $n\in \mb{N}\cup\{\aleph_0\}$, recall that a \emph{complete graph $K^n$ on $n$ vertices} is a graph on vertex set $VK^n$ (with $|VK^n| = n$) with edges given by $u\sim v\in K^n$ if and only if $u\neq v\in VK^n$. The complement of the graph $K^n$ is called the \emph{null graph on $n$ vertices} or an \emph{independent set on $n$ vertices}, and is denoted by $\bar{K}^n$. Recall that the \emph{complement} of a graph $\G = (V,E)$ is the graph $\bar{\G} = (V,[V]^2\smallsetminus E)$; that is, the graph $\G$ on vertex set $V$ where all the edges in $\bar{\G}$ are non-edges of $\G$ and vice versa. A subset $X$ of $V$ is an \emph{independent set} if $x\nsim x'$ for all $x,x'\in X$. For more on the basics of graph theory, see \cite{diestel2000graph}.

\begin{example}\label{mhhinfnul} It is well-known (see \cite{lachlan1980countable}) that the complete graph on countably many vertices $K^{\aleph_0}$ is homogeneous. 
Suppose that $h:A\rarr{}B$ is a homomorphism between two finite substructures of 
$K^{\aleph_0}$. Then as $h$ preserves edges, it cannot send two distinct 
vertices $x_1,x_2\in VA$ to a single point $v\in VB$; hence $h$ is injective. As 
there are no non-edges to preserve, it must preserve non-edges and so $h$ is an 
embedding. It follows from \autoref{123lem} (1) that $K^{\aleph_0}$ is 
HA-homogeneous and so $\mb{H}(K^{\aleph_0}) = \{$HA$\}$. 

Its complement $\bar{K}^{\aleph_0}$, the infinite null graph, is also homogeneous and as every finite partial monomorphism of $\bar{K}^{\aleph_0}$ 
preserves non-edges, it is MA-homogeneous by \autoref{123lem} (1). We note that 
there exist non-injective finite partial homomorphisms of $\bar{K}^{\aleph_0}$ 
and hence it is not HM or HB-homogeneous by \autoref{123lem} (3). So if 
$h:A\rarr{}B$ is any finite partial homomorphism, we can define a bijective map 
$g:\bar{K}^{\aleph_0}\smallsetminus A\rarr{}\bar{K}^{\aleph_0}\smallsetminus B$ 
and note that the map $\alpha:\bar{K}^{\aleph_0}\rarr{}\bar{K}^{\aleph_0}$ that 
acts like $h$ on $A$ and $g$ everywhere else is an epimorphism of 
$\bar{K}^{\aleph_0}$; so $\bar{K}^{\aleph_0}$ is HE. Hence 
$\mb{H}(\bar{K}^{\aleph_0}) = \{$MA, HE$\}$.
\end{example}

\begin{example}\label{mhhrangra} Let $R$ be the random graph (see 
\cite{cameron1997random}, for instance). Note that there exist finite partial monomorphisms of $R$ 
that are not isomorphisms and finite partial homomorphisms of $R$ that are not 
monomorphisms; hence $R$ is not MI or HM-homogeneous by \autoref{123lem}. It was shown in \cite{coleman2018permutation} that $R$ is MB-homogeneous and in \cite{cameron2006homomorphism} that $R$ is HH-homogeneous; here, we show that $R$ 
is HE-homogeneous. To do this, we rely on the \emph{Alice's restaurant property} characteristic of $R$ (see \cite{cameron1997random}), which says:

\begin{quote} (ARP) For any finite, disjoint subsets $U,V\subseteq V\G$, there exists $x\in V\G$ such that $x\sim u$ for all $u\in U$ and $x\nsim v$ for all $v\in V$. 
\end{quote}

Let $A\subseteq B\in$ Age$(R)$ with $B\smallsetminus A = \{b\}$ and suppose that 
$\bar{f}:A\rarr{}R$ is an antihomomorphism such that $A\bar{f}$ is finite. Using ARP, we can find a vertex $v\in VR$ such 
that $v$ is independent of everything in $A\bar{f}$. Let $\bar{g}:B\rarr{}R$ be the multifunction defined by 
such that $b\bar{g} = v$ and $\bar{g}|_A = \bar{f}$; this is an antihomomorphism as all non-edges from $A$ 
to $b$ are preserved. Therefore, $R$ has the 1P$\bart{HH}$EP and so $R$ is HE-homogeneous by \autoref{onepmbep} and 
\autoref{xzep}. We conclude that $\mb{H}(R) = \{$IA, MB, HE$\}$.
\end{example}

\begin{remark} It was shown in \cite[Theorem 5.3]{lockettgeneric} that $R$ has 
a generic endomorphism. As $R$ is HE-homogeneous, it follows from Theorem 2.1 of 
the same source that this generic endomorphism must be in Epi$(R)$.
\end{remark}

\begin{example}\label{mhhchegra} Let $H$ be the complement of the homogeneous 
$K_n$-free graph for $n\geq 3$. It was previously shown that $H$ is MM but not MB-homogeneous \cite{coleman2018permutation}. A result of \cite{dolinka2014bergman} shows that $H$ has the 1PHHEP, and so is both MM and HH-homogeneous by these two results. 

We now show that $H$ does not have the 1P$\bart{HH}$EP and hence cannot be 
HE-homogeneous. Let $A = \{a\}$ be a single vertex, and let $B$ 
be the independent pair of vertices $\{a,b\}$. Note that $A\subseteq B\in$ 
Age$(H)$. Let $\bar{f}:A\rarr{}H$ be an antihomomorphism sending $A =\{a\}$ to an 
independent set $A\bar{f}$ of $n-1$ vertices in $H$; such a substructure exists by 
definition of $H$. Then as antihomomorphisms preserve non-edges and cannot send two points in a domain to a single point in the codomain, a potential image 
point for $b$ in $H$ must be a vertex $x$ independent of $A\bar{f}$; this cannot happen as $H$ would then induce an independent $n$-set. So $H$ does not have the 
1P$\bart{HH}$EP. Therefore, $H$ is not HE-homogeneous and we see that $\mb{H}(H) = 
\{$IA, MM, HH$\}$.
\end{example}

The next result, detailing the rest of the disconnected, countably infinite homogeneous graphs, extend results of \cite{cameron2006homomorphism} and \cite{rusinov2010homomorphism}.

\begin{proposition}\label{mhhdishomo} Let $\Gamma = \bigsqcup_{i\in I}K^n_i$ be a disjoint union of $|I|$ many complete graphs, each of which have size $n\in\mb{N}\cup\{\aleph_0\}$.
\begin{enumerate}[(1)]
 \item If $|I| = m$ for some $m\in\mb{N}$ and $n = \aleph_0$, then $\mb{H}(\G) = \{$IA, MM, HH$\,\}$.
 \item If $|I| = \aleph_0$ and $n\in\mb{N}$, then $\mb{H}(\G) = \{$IA, HE$\,\}$.
 \item If $|I| = n = \aleph_0$, then $\mb{H}(\G) = \{$IA, MB, HE$\,\}$. 
\end{enumerate}
\end{proposition}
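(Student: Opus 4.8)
The plan is to read the maximal classes off \autoref{hhdiag} by deciding, for each of the eighteen notions, whether $\G$ satisfies it, and then taking the minimal satisfied classes. The governing structural fact is that two distinct vertices of $\G$ are adjacent precisely when they lie in the same clique-component; consequently a homomorphism is injective on each component and sends each component into a single component, a monomorphism is a globally injective such map, and an embedding additionally preserves the partition into components. In particular $\G$ is homogeneous (it occurs in the Lachlan--Woodrow classification \cite{lachlan1980countable}), hence IA-homogeneous in every case. Two negative facts hold whenever $|I|\geq 2$: mapping a non-edge (two vertices of different components) to an edge gives a finite partial monomorphism that is not an isomorphism, so $\G$ is not MA-homogeneous by \autoref{123lem}~(1); and collapsing two vertices of different components gives a non-injective finite partial homomorphism, so $\G$ is neither HM- nor HB-homogeneous by \autoref{123lem}~(2). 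Since every class in the conclusion lies in $\mf{I}$, I will prove the positive assertions through the one-point extension properties of \autoref{onepmbep}, using \autoref{xyep} and \autoref{xzep} to pass between these and homogeneity. (Throughout I take $m\geq 2$ in (1) and $n\geq 2$ in (2); the excluded cases $m=1$ and $n=1$ are $K^{\aleph_0}$ and a null graph, treated in \autoref{mhhinfnul}.)

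First I handle the homomorphism column. The 1PHHEP always holds: to place the image of a new vertex $b$ I only need a vertex adjacent to the images of the neighbours of $b$ in $A$, and since $b$ together with these neighbours is a clique, its image is a clique of size at most $n$ inside one component $C$, so $C$ always has a spare vertex (and if $b$ has no neighbours any vertex works). Hence $\G$ is HH-homogeneous in all three cases. For HE I must also verify the 1P$\bart{HH}$EP: given an antihomomorphism $\bar f:A\rarr{}\G$ with $A\bar f$ finite and a new vertex $b$, I send $b$ to a single vertex $v$, which is legitimate iff the neighbours of $v$ inside $A\bar f$ map (under the associated surjective homomorphism) to neighbours of $b$. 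When $|I|=\aleph_0$ I choose $v$ in a component untouched by the finite set $A\bar f$, so the condition is vacuous and HE holds (cases (2), (3)). When $|I|=m$ is finite this fails: taking $A\bar f$ to meet every component and $b$ non-adjacent to all of $A$ forces the preimage of $b$ to be non-adjacent to $A\bar f$, which is impossible since every vertex is adjacent to the representative of $A\bar f$ in its own component. Thus HE holds exactly when $|I|=\aleph_0$.

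Next the monomorphism column. The 1PMMEP asks for an image of $b$ in the target component $C$ of its neighbours' images, adjacent to them and unused; this succeeds exactly when $C$ is infinite, i.e. $n=\aleph_0$ (cases (1), (3)), so $\G$ is MM-homogeneous there. When $n$ is finite it fails by saturation: map an independent $n$-set bijectively onto a full $n$-clique $C$ (a monomorphism) and adjoin $b$ adjacent to one vertex; the image of $b$ would have to lie in $C$ yet avoid all of $C$, which is impossible, so $\G$ is not MM-homogeneous in case (2), and since MA, MB lie below MM this also removes every stronger $M$-class. For MB in case (3) I add the 1P$\bart{MM}$EP, whose single-vertex preimage of $b$ can again be taken in a fresh component. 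The final ingredient is the collapse lemma for case (1): when $|I|=m$ is finite a surjective endomorphism sends the $m$ infinite components into components injectively, and surjectivity forces this to be a bijection of the components, each mapped onto its image, so non-edges are preserved and $\text{Epi}(\G)=\text{Bi}(\G)=\text{Aut}(\G)$; consequently ME, MB, MA coincide and HE, HB, HA coincide, and all of them fail because $\G$ is not MA-homogeneous.

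Reading off minimal satisfied classes now gives the three cases: in (1) the collapse leaves exactly IA, MM, HH as the strongest satisfied notions in the three columns, so $\mb{H}(\G)=\{$IA, MM, HH$\}$; in (2), $\G$ is IA- and HE-homogeneous but fails MM (hence MA, MB, MI) and HB (hence HA), so $\mb{H}(\G)=\{$IA, HE$\}$; in (3) it is IA-, MB- and HE-homogeneous while MA, HM, HB fail, so $\mb{H}(\G)=\{$IA, MB, HE$\}$. That these are exactly the maximal classes follows because every satisfied notion sits above one of them in \autoref{hhdiag}; in particular the awkward members of $\mf{N}$ cause no trouble, since MH lies above MM and HH, ME lies above MB and HE, and IB, IM, IE, IH all lie above IA. The main obstacle is the interplay of the two finite-cardinality phenomena --- the saturation that destroys MM for finite components, and the collapse $\text{Epi}(\G)=\text{Bi}(\G)=\text{Aut}(\G)$ for finitely many components --- together with pinning down the exact direction in which each one-point extension succeeds or fails; confirming that no class of $\mf{N}$ slips in as an extra minimal element is the delicate bookkeeping.
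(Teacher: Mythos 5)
Your proof is correct and arrives at the right three sets of mhh-classes, but it takes a noticeably more self-contained route than the paper. The paper's proof imports most of its ingredients: MM and not-MB for finitely many infinite cliques, and MB in case (3), are cited from \cite{coleman2018permutation}, while HH and not-MM come from Propositions 1.1 and 2.5 of \cite{cameron2006homomorphism}; the only parts argued directly are the HE column, where the 1P$\bart{HH}$EP is verified by choosing a preimage in a fresh component when $|I|=\aleph_0$ and refuted in case (1) by an independent-set obstruction modelled on \autoref{mhhchegra}. You instead verify or refute every relevant one-point extension property from first principles, and your collapse observation $\mathrm{Epi}(\G)=\mathrm{Bi}(\G)=\mathrm{Aut}(\G)$ for finitely many components is a genuine alternative: it disposes of ME, MB, MA, HE, HB, HA in case (1) in one stroke, re-proving the cited not-MB fact and, importantly, settling ME there --- which your order-theoretic remark about $\mf{N}$ alone would not, since both lower covers MB and HE of ME fail in that case and ME could a priori have been a new minimal element. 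You are also right to set aside $m=1$ and $n=1$, which the statement does not formally exclude and for which the claimed answers would be those of \autoref{mhhinfnul} instead. Two points worth tightening: in your refutation of the 1P$\bart{HH}$EP for finite $|I|$, a candidate preimage of $b$ that coincides with the chosen representative of $A\bar{f}$ in its component is excluded not by the non-edge condition but by the requirement that $\bar{g}$ be a partial multifunction (preimages of distinct points must be disjoint), a point the paper makes explicit in \autoref{mhhchegra}; and in your 1PHHEP argument it is the image of the neighbours of $b$, of size at most $n-1$ rather than $n$, that guarantees the spare vertex in a component of size $n$.
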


\begin{proof}
\begin{enumerate}[(1)]
 \item It is shown in \cite{coleman2018permutation} that the finite disjoint union of infinite complete graphs is MM but not MB-homogeneous. Furthermore, \cite[Proposition 1.1]{cameron2006homomorphism} asserts that $\G$ is HH-homogeneous. It remains to prove that $\G$ is not HE-homogeneous; here, it is enough to show that $\G$ does not have the 1P$\bart{HH}$EP by \autoref{onepmbep} and \autoref{xzep}. As $\G$ in this case does not embed an independent $n+1$-set, the proof of this is similar to \autoref{mhhchegra}.

 \item In this case, $\G$ is HH-homogeneous by \cite[Proposition 1.1]{cameron2006homomorphism}, but not MM-homogeneous by Proposition 2.5 of the same paper. We show that $\G$ has the 1P$\bart{HH}$EP in this case; proving that $\G$ is HE-homogeneous. Suppose that $A\subseteq B\in$ Age$(\G)$ with $B\smallsetminus A = \{b\}$ and that $\bar{f}:A\to \G$ is an antihomomorphism such that $A\bar{f}$ is finite. As $A\bar{f}$ is finite, it follows that $A\bar{f}\subseteq \bigsqcup_{j\in J}K^n_j$ for some finite set $J\subseteq I$. Select a $v\in K^n_k$, where $k\in I\smallsetminus J$; so $v$ is independent of every element of $A\bar{f}$. Define $\bar{g}:B\to \G$ to act like $\bar{f}$ on $A$ with $b\bar{g} = v$. Then $\bar{g}$ is an antihomomorphism and so $\G$ has the 1P$\bart{HH}$EP.
 
 \item The proof that $\G$ is MB-homogeneous can be found in \cite[Proposition 3.4]{coleman2018permutation}; the proof that it is HE-homogeneous is similar to part (2).
\end{enumerate}
\end{proof}

The only countable homogeneous graphs that have not yet been considered are the complements of the disconnected graphs described in \autoref{mhhdishomo}. The following result deals with these cases. Recall from \cite[Theorem 3, Theorem 5]{rusinov2010homomorphism} that a countable graph is MH-homogeneous if and only if it is HH-homogeneous; if this graph is also connected, then it is HH-homogeneous if and only if it is MM-homogeneous.

\begin{proposition}\label{mhhcompdishomo} Let $\bar{\Gamma} = \overline{\bigsqcup_{i\in I}K^n_i}$ be the complement of a disjoint union of $|I|$ many complete graphs, each of which have size $n\in\mb{N}\cup\{\aleph_0\}$.
\begin{enumerate}[(1)]
 \item If $|I| = m$ for some $m\in\mb{N}$ and $n = \aleph_0$, then $\mb{H}(\bar{\G}) = \{$IA$\}$.
 \item If $|I| = \aleph_0$ and $n\in\mb{N}$, then $\mb{H}(\bar{\G}) = \{$IA, MM, HH$\}$.
 \item If $|I| = n = \aleph_0$, then $\mb{H}(\bar{\G}) = \{$IA, MB, HE$\}$. 
\end{enumerate}
\end{proposition}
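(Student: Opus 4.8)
The key observation is that $\bar{\G}=\overline{\bigsqcup_{i\in I}K^n_i}$ is the \emph{complete multipartite graph} whose parts are the former cliques: it has $|I|$ independent parts, each of size $n$, and two vertices are adjacent exactly when they lie in different parts. Since the complement of a homogeneous graph is homogeneous, $\bar\G$ is IA-homogeneous in all three cases, so IA (or a class below it) is always present. Two exclusions hold uniformly once the configurations are nondegenerate: as soon as $n\ge 2$ there is a non-injective finite partial homomorphism (collapse two vertices of one part), so by \autoref{123lem} (2) $\bar\G$ is neither HM nor HB-homogeneous, and by \autoref{123lem} (1) not HI/HA-homogeneous; and as soon as $n,|I|\ge 2$ there is a finite partial monomorphism sending an independent pair into two different parts, which is not an isomorphism, so by \autoref{123lem} (1) $\bar\G$ is not MI/MA-homogeneous. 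Throughout I would test homogeneity via the one-point criteria: by \autoref{onepmbep} together with \autoref{xyep} and \autoref{xzep}, it suffices to decide the 1PXYEP and the 1P$\bart{XY}$EP.

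I would handle Case (3) first, as it is the most forgiving. With infinitely many infinite parts every ``forth'' step succeeds by placing the image of the new vertex in a part not yet met by the finite image, giving the 1PMMEP and 1PHHEP. For the ``back'' steps, the non-relation constraints of an antihomomorphism (resp.\ antimonomorphism) $\bar f$ confine the image of the new point to a single part $P^{*}$; since $P^{*}$ is infinite while $A\bar f$ is finite there is always room, giving the 1P$\bart{MM}$EP and 1P$\bart{HH}$EP, hence MB- and HE-homogeneity. With the exclusions above this yields $\mb{H}(\bar\G)=\{$IA, MB, HE$\}$.

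Case (1) I would dispatch with a single obstruction. Take an independent $m$-set $A$ inside one (infinite) part $P$, and let the map send its $i$-th vertex to a chosen vertex $v_i$ of the $i$-th part; this is a finite partial monomorphism (indeed an embedding of an independent set) that is also a homomorphism, and whose image $\{v_1,\dots,v_m\}$ meets \emph{every} part. Any vertex $b$ lying outside $P$ is adjacent to all of $A$, so an extension would have to send $b$ to a common neighbour of $v_1,\dots,v_m$, i.e.\ to a vertex adjacent to something in all $m$ parts, which cannot exist. Thus both the 1PMMEP and 1PHHEP fail; by the containment implications of \autoref{hhdiag} (not MM kills MB, ME, MH, MA; not HH kills HE, HB, HM, HA) this rules out every class outside the ``I-column'', leaving $\mb{H}(\bar\G)=\{$IA$\}$.

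Case (2) is the crux. The positive half (MM- and HH-homogeneity) again follows from the fresh-part forth argument, valid because there are infinitely many parts. The negative half exploits that each part is now a \emph{finite} maximal independent set of size $n$: for the failure of HE take $A=\{a\}$ with $a\bar f$ a full part $P$ of size $n$ and a non-adjacent new point $b$, so the antihomomorphism condition forces $b\bar g\subseteq P$, yet $b\bar g$ must be disjoint from $a\bar f=P$, leaving no room and defeating the 1P$\bart{HH}$EP; an analogous antimonomorphism whose single forced image vertex is trapped in a part already exhausted by $A\bar f$ defeats the 1P$\bart{MM}$EP and so kills MB. The main obstacle I anticipate is excluding the classes of $\mf{N}$ that lie outside the reach of \autoref{Forth} and \autoref{BackForth}, above all ME: here one cannot read the answer off an extension property, and I would instead argue on the fibres of an epimorphism. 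Because the preimage of a part under an endomorphism is independent, hence of size $\le n$, surjectivity forces every epimorphism of $\bar\G$ to carry each part bijectively onto a part; such a map cannot extend a partial monomorphism sending one part onto a clique, so $\bar\G$ is not ME-homogeneous (and MH is non-minimal, being above MM). Assembling these facts gives $\mb{H}(\bar\G)=\{$IA, MM, HH$\}$.
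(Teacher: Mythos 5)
Your proposal is correct, and in two places it diverges usefully from the paper's own argument. For part (3) you argue essentially as the paper does (fresh parts for the forth steps, an infinite forced part for the back steps). For part (1) the paper invokes \cite[Proposition 2.5]{cameron2006homomorphism} together with the Rusinov--Schweitzer equivalences for connected graphs, whereas you exhibit a single self-contained obstruction: an independent $m$-set mapped onto a transversal of the parts admits no one-point homomorphism extension, since a transversal has no common neighbour. That is a cleaner, more elementary route; just be careful with the parenthetical ``not MM kills MB, ME, MH, MA'' --- the containments run the other way for ME and MH (MM $\subseteq$ MH, and MM, ME are incomparable in $\mb{H}$), so what eliminates MH and ME is not the failure of MM-homogeneity per se but the fact that your witness is a finite partial \emph{monomorphism} with no one-point \emph{homomorphism} extension; your witness does exactly that, so the conclusion stands. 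For part (2) the paper rules out MB by complementation (via \cite{coleman2018permutation} and \autoref{mhhdishomo}) where you defeat the 1P$\bart{MM}$EP directly; both work, and your refutation of the 1P$\bart{HH}$EP matches the paper's. The genuinely valuable difference is your treatment of ME: since ME lies above MB and HE but is incomparable with MM and HH, the failures of MB- and HE-homogeneity do not by themselves exclude ME, and the paper's proof of (2) leaves this class untouched even though the stated value of $\mb{H}(\bar{\G})$ requires it to be excluded. Your fibre argument --- the preimage of a part under an endomorphism is independent, hence of size at most $n$, while surjectivity forces size at least $n$, so every epimorphism carries parts bijectively onto parts and is in fact injective --- correctly closes this point and is, to my reading, a necessary supplement to the published proof.
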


\begin{proof}
\begin{enumerate}[(1)]
 \item Notice that $\bar{\G}$ does not contain an infinite complete graph; therefore it is not MM-homogeneous by \cite[Proposition 2.5]{cameron2006homomorphism}. Since $\bar{G}$ is connected, it is not MH-homogeneous by the results of \cite{rusinov2010homomorphism} mentioned above.
 
 \item In this case, any finite set $U\subseteq \bar{\G}$ of vertices has a vertex $x$ such that $x$ is adjacent to every element of $U$. (This is property $\tr$ of \cite{coleman2018permutation}.) Therefore, $\bar{G}$ is MM and HH-homogeneous by \cite[Proposition 2.1]{cameron2006homomorphism}. If $\bar{\G}$ were MB-homogeneous, then $\G$ would be MB-homogeneous by \cite[Proposition 3.1]{coleman2018permutation}; contradicting \autoref{mhhdishomo}. So $\G$ is not MB-homogeneous. It remains to show that $\bar{\G}$ is not HE-homogeneous; the proof of this is similar to \autoref{mhhdishomo} (1).
 
 \item This is MB-homogeneous by the remark following \cite[Proposition 3.4]{coleman2018permutation}. It can be shown that this is HE-homogeneous using a similar argument to part (2) of \autoref{mhhdishomo}.
\end{enumerate}
\end{proof}

\begin{remark}
Notice that part (2) of both Propositions \ref{mhhdishomo} and \ref{mhhcompdishomo} show that the complement of a HE-homogeneous graph is not necessarily HE-homogeneous.
\end{remark}

These results mean that we have determined the mhh-classes for all countable homogeneous graphs in Lachlan and Woodrow's classification \cite{lachlan1980countable}; the results are summarised in \autoref{mhhgraphtable}.

\begin{table}[h]
\renewcommand{\arraystretch}{1.8}
\centering
\begin{tabular}{lll}
mhh-classes & countable IA graph & reference \\
\hline \hline
$\{$HA$\}$ & $K^{\aleph_0}$ & (\ref{mhhinfnul})\\ \hline
$\{$MA, HE$\}$ & $\bar{K}^{\aleph_0}$ & (\ref{mhhinfnul})\\ \hline
$\{$IA, MB, HE$\}$ & $R$, $\bigsqcup_{i\in\mb{N}} K^{\aleph_0}_i$, $\overline{\bigsqcup_{i\in\mb{N}} K^{\aleph_0}_i}$ & (\ref{mhhrangra}, \ref{mhhdishomo} (3), \ref{mhhcompdishomo} (3))\\ \hline
$\{$IA, HE$\}$ & $\bigsqcup_{i\in\mb{N}} K^{n}_i$ & (\ref{mhhdishomo} (2))\\ \hline
$\{$IA, MM, HH$\}$ & $\bar{K}^n$-free ($n\geq 3$), $\bigsqcup_{i=1}^n K^{\aleph_0}_i$, $\overline{\bigsqcup_{i\in\mb{N}} K^{n}_i}$ & (\ref{mhhchegra}, \ref{mhhdishomo} (1), \ref{mhhcompdishomo} (2))\\ \hline
$\{$IA$\}$ & $K^n$-free ($n\geq 3$), $\overline{\bigsqcup_{i=1}^n K^{\aleph_0}_i}$ & (\ref{mhhcores}, \ref{mhhcompdishomo} (1))\\ \hline
\end{tabular}
\caption{Maximal homomorphism-homogeneity classes for countable homogeneous graphs.}\label{mhhgraphtable}
\end{table}

\subsection{Digraphs}

Following the results of \autoref{ssgraphs} and similar work on posets \cite{lockett2014some}, the next natural direction would be to determine mhh-classes for the countable homogeneous digraphs. Unless stated otherwise, in this article a \emph{digraph} $\Delta = (V\Delta,A\Delta)$ is a set of \emph{vertices} $V\Delta$ together with a set $A\Delta\subseteq V\Delta^2$ of ordered pairs, called \emph{arcs} of the digraph. For two vertices $x,y$ of $\Delta$, we write $x\to y$ if $(x,y)\in A\Delta$, and $x\parallel y$ if neither $(x,y)$ nor $(y,x)$ are in $A\Delta$. All the digraphs in this article are \emph{loopless}; so for all $x\in \Delta$, it follows that $(x,x)\notin \Delta$. Additionally, we stipulate that they do not contain $2$-cycles -- there is a \emph{2-cycle} between $x$ and $y$ if and only if $x\to y$ and $y\to x$ -- with the notable exception of \autoref{2cycles}.

We provide a few introductory observations here, and leave the further development of the subject as an open question. Our first example deals with the countable homogeneous tournaments, classified in \cite{lachlan1984countable}.

\begin{example}\label{tournament} Recall that a 
\emph{tournament} is defined to be an oriented, loopless complete graph. By a 
similar argument to the complete graph in \autoref{mhhinfnul}, every finite 
partial homomorphism of a tournament is an embedding. It follows from 
\autoref{123lem} (1) that every countable homogeneous tournament is 
HA-homogeneous. Therefore, the three countable homogeneous tournaments as 
classified by Lachlan \cite{lachlan1984countable}, namely $(\mb{Q},<)$, the 
random tournament $\mb{T}$, and the local order $S(2)$, are 
all HA-homogeneous. So HA is the unique mhh-class for these three 
examples.
\end{example}

\begin{example}\label{ograph} Let $D$ be the \emph{generic 
digraph without $2$-cycles}; for a detailed definition, see \cite{agarwal2016reducts}. This is the unique countable homogeneous structure whose age contains all finite digraphs without $2$-cycles. This structure is also MB-homogeneous \cite[Example 4.4]{coleman2018permutation}. 

However, $D$ is not HH-homogeneous. First of all, as there exist finite partial homomorphisms of $D$ that are not monomorphisms (such as an independent $2$-set being mapped to a single point), $D$ is not HM-homogeneous by \autoref{123lem} (2). To show that $D$ is not HH-homogeneous, it is enough to prove that every endomorphism of $D$ is a monomorphism. Consider $\gamma\in$ End$(D)$, and suppose there exists $v\parallel w\in VD$ such that 
$v\gamma = w\gamma$. As $D$ is universal and homogeneous, there exists an 
oriented graph $A = \{v,w,x\}$ such that $x\rarr{}v$ and $w\rarr{}x$ (see \autoref{nastydig}).
\begin{center}
\begin{figure}[h]
\centering
\begin{tikzpicture}[node distance=1.8cm,inner sep=0.7mm,scale=1]
\node(U5) at (-1,0) [circle,draw] {$w$};
\node(U4) at (0,1) [circle,draw] {$x$};
\node(U3) at (1,0) [circle,draw] {$v$};
\draw[middlearrow={latex}] (U5) -- (U4);
\draw[middlearrow={latex}] (U4) -- (U3);
\end{tikzpicture}
\caption{The digraph $A$ described in \autoref{ograph}.}\label{nastydig}
\end{figure}
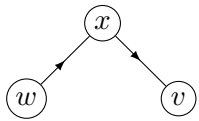
\end{center}
The image of $A$ under $\gamma$ is a 2-cycle and this is a contradiction as $D$ does not 
embed $2$-cycles. It follows that every endomorphism of $D$ is a monomorphism and so $D$ is not HH-homogeneous. We conclude that the mhh-classes of $D$ are IA and MB.
\end{example}

\begin{corollary}\label{ographmm}
Let $\Delta$ be a countable connected homogeneous digraph that embeds the digraph $A$ as in \autoref{nastydig}. Then Mon$(\Delta) =$ End$(\Delta)$ and $\Delta$ is not HH-homogeneous. \dne 
\end{corollary}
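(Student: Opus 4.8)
The plan is to follow the argument of \autoref{ograph} almost verbatim, with the single hypothesis that $\Delta$ embeds $A$ playing the role that universality played there; the standing convention that our digraphs are loopless and contain no $2$-cycles is what drives each contradiction. Since the inclusion Mon$(\Delta)\subseteq$ End$(\Delta)$ is immediate, to prove Mon$(\Delta) = $ End$(\Delta)$ it suffices to show that every $\gamma\in$ End$(\Delta)$ is injective. So I would suppose the contrary, fixing distinct $p,q\in V\Delta$ with $p\gamma = q\gamma =: z$, and examine the three possible relationships between $p$ and $q$. If $p\to q$ (or symmetrically $q\to p$), then since $\gamma$ is a homomorphism we obtain $z\to z$, a loop, which is impossible; this leaves only the case $p\parallel q$.

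Second, I would resolve the case $p\parallel q$ by using homogeneity to reproduce the configuration $A$ over the pair $\{p,q\}$. Writing the embedded copy of $A$ as $\{v',w',x'\}$ with $w'\to x'\to v'$ and $v'\parallel w'$, the map $v'\mapsto p$, $w'\mapsto q$ is a partial isomorphism between two independent $2$-sets, and so extends to an automorphism $\phi$ of $\Delta$. Setting $x := x'\phi$ gives $q = w'\phi\to x'\phi = x$ and $x = x'\phi\to v'\phi = p$, that is, $q\to x\to p$. Applying $\gamma$ then yields $z\to x\gamma$ and $x\gamma\to z$: if $x\gamma = z$ this is a loop, and if $x\gamma\neq z$ it is a $2$-cycle, so in either case the standing convention is violated. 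Hence $\gamma$ is injective, every endomorphism is a monomorphism, and Mon$(\Delta) = $ End$(\Delta)$.

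Finally, I would deduce that $\Delta$ is not HH-homogeneous. As $\Delta$ embeds the independent pair $\{v,w\}$ of $A$, collapsing $\{v,w\}$ onto a single vertex is a finite partial homomorphism of $\Delta$ that is not injective, hence not a monomorphism, exactly as in \autoref{ograph}. Were $\Delta$ HH-homogeneous, this map would extend to an endomorphism of $\Delta$; but every endomorphism lies in Mon$(\Delta)$ and is therefore injective, so no such extension can exist. Thus $\Delta$ is not HH-homogeneous.

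The only genuinely non-routine step is the middle one: producing, for an arbitrary non-adjacent pair $\{p,q\}$, a vertex $x$ completing the pattern $q\to x\to p$. This is precisely where homogeneity is needed in place of the universality invoked in \autoref{ograph}, and the care required is in verifying that the map being extended is a bona fide partial isomorphism between independent $2$-sets before appealing to homogeneity; everything else reduces to the loopless, $2$-cycle-free convention.
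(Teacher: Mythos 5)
Your proposal is correct and is essentially the argument the paper intends: the corollary is stated without a separate proof precisely because it abstracts \autoref{ograph}, and you have correctly identified that homogeneity (extending the partial isomorphism between independent $2$-sets to an automorphism carrying $x'$ to the required vertex $x$) replaces the appeal to universality there, with the loopless and $2$-cycle-free conventions supplying the contradictions in both cases. The final step, that a non-injective finite partial homomorphism (collapsing the independent pair) cannot extend once End$(\Delta)=$ Mon$(\Delta)$, matches the paper's reasoning as well.
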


\begin{remark} It was shown in \cite{coleman2018permutation} that the $I_n$-free digraph for $n\geq 3$ is MM but not MB-homogeneous. It follows from \autoref{ographmm} that the mhh-classes for these digraphs are IA and MM.
\end{remark}

In the disconnected case, the situation is slightly different. Here is an example of a HE-homogeneous digraph. 

\begin{example} Let $\mb{T}$ be the countable, homogeneous random tournament (see \cite{lachlan1984countable}). Using homogeneity of $\mb{T}$, we can show that $\mb{T}$ satisfies the following property $(\leftrightarrows)$:

\begin{quote}(Property $\lra$)
For all finite, disjoint subsets $U,V$ of $V\mb{T}$ there exists $x\in V\mb{T}$ such that $x\to u$ for all $u$ in $U$ and $v \to x$ for all $v\in V$.
\end{quote}

Now, let $\mc{T} = \bigsqcup_{i\in\mb{N}}\mb{T}_i$ be the infinite disjoint union of isomorphic copies $\mb{T}_i$ of the countable random tournament. Our aim is to show that $\mc{T}$ is HE-homogeneous; so it is enough to show that $\mc{T}$ has both the 1PHHEP and 1P$\bart{HH}$EP by \autoref{onepmbep} and \autoref{xzep}. Suppose then that $A\subseteq B\in$ Age$(\mc{T})$ is such that $B\smallsetminus A = \{b\}$. As $A\in$ Age$(\mc{T})$, we can write $A = \bigsqcup_{j\in J}T_j$, where each $T_j$ is a finite tournament and $J$ is finite. Let $f:A\to\mc{T}$ be a homomorphism. There are two cases to consider; either $b$ is independent of every tournament in $A$, or it is not. If $b$ is independent of every tournament in $A$, then choose any vertex $v\in V\mc{T}\smallsetminus Af$; the function $g:B\to\mc{T}$ acting like $f$ on $A$ and sending $b$ to $v$ is a homomorphism. If $b$ is not independent of $A$, then there is only one tournament $T_j\subseteq A$ that is related to $b$. Partition $T_j$ into two sets \begin{align*} T_j^\rightarrow(b) = \{c\in T_j\stc c\to b\}\quad\hbox{ and }\quad T_j^\leftarrow(b) = \{d\in T_j\stc b\to d\}.
\end{align*}
As $T_j$ is a tournament, it follows that $f|_{T_j}:T_j\to T_jf$ is bijective and so $T_j^\rightarrow(b)f$ and $T_j^\leftarrow(b)f$ partition $T_jf$. As $f:A\to\mc{T}$ is a homomorphism, $T_jf$ is a finite subtournament of $\mb{T}_i$ for some $i\in\mb{N}$; so both $T_j^\rightarrow(b)f$ and $T_j^\leftarrow(b)f$ are finite. We can use property $\lra$ of $\mb{T}_i$ to find a vertex $w$ such that $cf\to w$ for all $cf\in T_j^\rightarrow(b)f$ and $w\to df$ for all $df\in T_j^\leftarrow(b)f$. Now, define a function $g:B\to\mc{T}$ that acts like $f$ on $A$ and sends $b$ to $w$; here, $g$ preserves all relations and so is a homomorphism. Therefore, $\mc{T}$ has the 1PHHEP.

The proof that $\mc{T}$ has the 1P$\bart{HH}$EP follows from a similar argument to \autoref{mhhdishomo} (2). Hence $\mc{T}$ is HE-homogeneous by \autoref{onepmbep} and \autoref{xzep}. Adapting this proof for the 1PMMEP and 1P$\bart{MM}$EP, we can show that $\mc{T}$ is MB-homogeneous; so $\mb{H}(\mc{T}) = \{$IA, MB, HE$\}$.
\end{example}

Changing our definition of digraph to include $2$-cycles also increases the flexibility of the structure.

\begin{example}\label{2cycles} Let $D^*$ be the generic digraph with $2$-cycles; similar to $D$, it is the unique countable homogeneous digraph whose age contains all finite digraphs with $2$-cycles. 
Recall (from \cite[ch4]{mcphee2012endomorphisms} or \cite[ch2]{coleman2017automorphisms}) that $D^*$ has a characteristic extension property known as the \emph{directed 
Alice's restaurant property (DARP)}, which says:

\begin{quote} (DARP) For any finite and pairwise disjoint sets of vertices $U,V,W,X$ of $D^*$, there exists a vertex $z$ of $D^*$ such that: there is an arc from $z$ to every element of $U$, an arc to $z$ from every element of $V$, a 2-cycle between $z$ and every element of $W$, and $z$ is independent of every vertex in $X$. (See \autoref{predarpdi} for a diagram of an example.)
\end{quote}

\begin{center}
\begin{figure}[h]
\centering
\begin{tikzpicture}[node distance=1.8cm,inner sep=0.7mm,scale=1]
\node(R) at (-4.5,-5.5) {$D^*$};
\node(C) at (0,0) {};
\node(U5) at (-1.5,0) [circle,draw] {};
\node(U4) at (-2,0) [circle,draw] {};
\node(U3) at (-2.5,0) [circle,draw] {};
\node(U2) at (-3,0) [circle,draw] {};
\node(U1) at (-3.5,0) [circle,draw] {};
\node(V1) at (1.5,0) [circle,draw] {};
\node(V2) at (2.5,0) [circle,draw] {};
\node(V3) at (3.5,0) [circle,draw] {};
\node(W1) at (-3.5,-4) [circle,draw] {};
\node(W2) at (-2.75,-4) [circle,draw] {};
\node(W3) at (-2,-4) [circle,draw] {};
\node(W4) at (-1.25,-4) [circle,draw] {};
\node(Wx) at (-2.25,-4) {};
\node(X0) at (3.4,-4) [circle,draw] {};
\node(X1) at (3,-4) [circle,draw] {};
\node(X2) at (2.6,-4) [circle,draw] {};
\node(X3) at (2.2,-4) [circle,draw] {};
\node(X4) at (1.8,-4) [circle,draw] {};
\node(X5) at (1.4,-4) [circle,draw] {};
\node(Xx) at (2.4,-4) {};
\node(W) at (-2.25,-5) {$W$};
\node(U) at (-2.5,1) {$U$};
\node(V) at (2.5,1) {$V$};
\node(X) at (2.4,-5) {$X$};
\node(x) at (0,-2) [circle,draw,label=above:$z$] {};
\draw(U3) ellipse (1.8cm and 0.5cm);
\draw(V2) ellipse (1.8cm and 0.5cm);
\draw(Wx) ellipse (1.8cm and 0.5cm);
\draw(Xx) ellipse (1.8cm and 0.5cm);
\draw (-5,-6) rectangle (5,2);
\draw[middlearrow={latex}] (x) -- (U5);
\draw[middlearrow={latex}] (x) -- (U4);
\draw[middlearrow={latex}] (x) -- (U3);
\draw[middlearrow={latex}] (x) -- (U2);
\draw[middlearrow={latex}] (x) -- (U1);
\draw[middlearrow={latex}] (V1) -- (x);
\draw[middlearrow={latex}] (V2) -- (x);
\draw[middlearrow={latex}] (V3) -- (x);
\draw[middlearrow={latex}] (x) to [out=245,in=55] (W4);
\draw[middlearrow={latex}] (x) to [out=234,in=46] (W3);
\draw[middlearrow={latex}] (x) to [out=222,in=38] (W2);
\draw[middlearrow={latex}] (x) to [out=212,in=28] (W1);
\draw[middlearrow={latex}] (W4) to [out=65,in=235] (x);
\draw[middlearrow={latex}] (W3) to [out=54,in=224] (x);
\draw[middlearrow={latex}] (W2) to [out=42,in=213] (x);
\draw[middlearrow={latex}] (W1) to [out=32,in=208] (x);
\draw[dotted] (x) -- (X1);
\draw[dotted] (x) -- (X2);
\draw[dotted] (x) -- (X3);
\draw[dotted] (x) -- (X4);
\draw[dotted] (x) -- (X5);
\draw[dotted] (x) -- (X0);
\end{tikzpicture}
\caption{Example of the directed Alice's restaurant property in the generic digraph with 2-cycles $D^*$.}\label{predarpdi}
\end{figure}
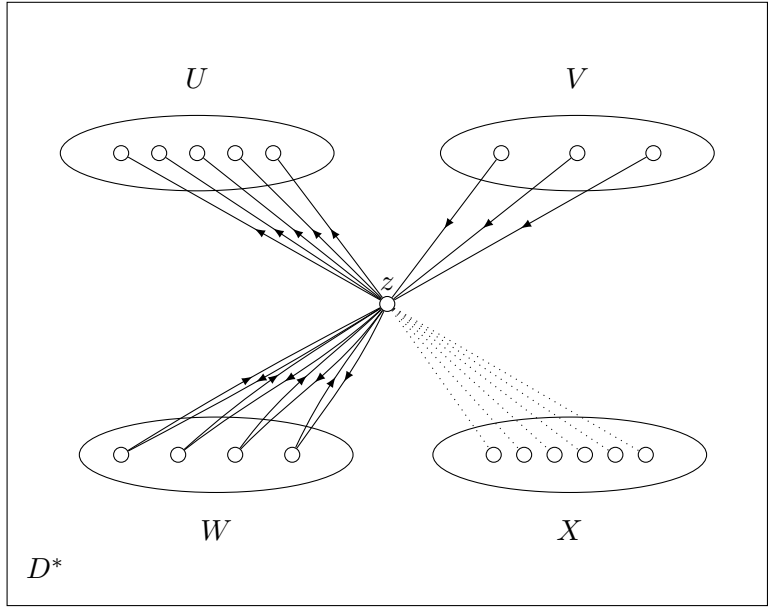
\end{center} 

It was mentioned in \cite{coleman2018permutation} that $D^*$ is MB-homogeneous. Using the DARP, we show that $D^*$ is HE-homogeneous. Let $A\subseteq B\in$ Age$(D^*)$ with $B\smallsetminus A = \{b\}$ and suppose 
that $f:A\rarr{}D^*$ is a homomorphism. As $Af$ is finite, we can use DARP to 
find a vertex $v\in VD^*$ such that there is a 2-cycle between $v$ and every 
element in $Af$. Let $g:B\rarr{}D^*$ be the map such that $bg = v$ 
and $g|_A = f$; this is a homomorphism as all arcs from $A$ to $b$ are 
preserved. Therefore $D^*$ has the 1PHHEP. The proof to show that $D^*$ has the 
1P$\bart{HH}$EP is similar; we use DARP to instead find a vertex $w\in VD^*$
that is independent of the finite set $Af$. The resulting multifunction $\bar{g}$ is 
an antihomomorphism as it preserves all non-relations. Therefore, $D^*$ is 
HE-homogeneous by \autoref{onepmbep} and \autoref{xzep}. So $\mb{H}(D^*) = 
\{$IA, MB, HE$\}$.
\end{example}

\begin{remark} Note the difference between the mhh-classes of $D$, the 
generic digraph without 2-cycles, and $D^*$, the generic digraph with 2-cycles. 
\end{remark}

The work in this section leads to a natural open question.

\begin{question} Investigate countable homomorphism-homogeneous digraphs (both with and without $2$-cycles) in more detail. In particular, determine the mhh-classes for those countable homogeneous digraphs in Cherlin's classification \cite{cherlin1998classification}.
\end{question}

\textbf{Acknowledgements:} The author would like to thank Christian and Maja Pech for pointing out an oversight in the introduction.

\end{document}